\numberwithin{equation}{section}
\newtheorem{theorem}{Theorem}[section]
\newtheorem{mainthm}{Main Theorem}
\newtheorem{corollary}[theorem]{Corollary}
\newtheorem{proposition}[theorem]{Proposition}
\newtheorem{definition}[theorem]{Definition}
\newtheorem{lemma}[theorem]{Lemma}
\newtheorem{remark}[theorem]{Remark}
\newtheorem{example}[theorem]{Example}
\newtheorem{question}[theorem]{Question}
\newtheorem{acknowledgements}{Acknowledgements}
\def\B{{\mathcal{B}}}
\def\E{{\mathcal{E}}}
\def\F{{\mathcal{F}}}
\def\G{{\mathcal{G}}}
\def\X{{\mathfrak{X}}}
\def\hom{{\mathcal{H}om}}
\def\p{{\mathfrak{p}}}
\def\PP{{\mathbb{P}}}
\def\Q{{\mathbb{Q}}}
\def\Z{{\mathbb{Z}}}
\def\Ext{{\mathrm{Ext}}}
\def\Hom{{\mathrm{Hom}}}
\def\Spec{{\mathrm{Spec\; }}}
\def\Proj{{\mathrm{Proj\; }}}
\def\Pic{{\mathrm{Pic\; }}}
\def\id{\mathop{\mathrm{id}}\nolimits}
\def\rk{\mathop{\mathrm{rk}}\nolimits}
\def\mod{\mathop{\mathrm{mod}}\nolimits}
\def\Sch{\mathop{\mathrm{Sch}}\nolimits}
\def\Lis{\text{Lis-\'Et}}
\def\0{{\mathbf{0}}}
\def\1{{\mathbf{1}}}
\def\lra{\longrightarrow}
\def\mbi#1{\boldsymbol{#1}}
\def\eqref#1{(\ref{#1})}
\def\Coh{\mathop{\mathrm{Coh}}\nolimits}
\begin{document}

\title{The FFRT property of two-dimensional normal 
graded rings and orbifold curves
}
\author[N. Hara]{Nobuo Hara}
\author[R. Ohkawa]{Ryo Ohkawa}
\address[N. Hara]{
Tokyo University of Agriculture and Technology, 2--24--16 Nakacho, Koganei, 
Tokyo 184--8588, Japan}
\email{nhara@cc.tuat.ac.jp}
\address[R. Ohkawa]{Waseda Research Institute for Science and Engineering, Waseda University, 3--4--1 Okubo, Shinjuku-ku, Tokyo 169--8555, Japan} 
\email{ohkawa.ryo@aoni.waseda.jp}

\maketitle

\begin{abstract}
We study the finite $F$-representation type (abbr.\ FFRT) 
property of a two-dimensional normal graded ring 
$R$ in characteristic $p>0$, using notions from the theory of algebraic stacks. Given a graded ring $R$, 
we consider an orbifold curve $\mathfrak C$, which is a root stack over the smooth curve $C=\Proj R$, 
such that $R$ is the section ring associated with a line bundle $L$ on $\mathfrak C$. The FFRT property 
of $R$ is then rephrased with respect to the Frobenius push-forwards $F^e_*(L^i)$ on the orbifold curve 
$\mathfrak C$. 
As a result, we see that if the singularity of $R$ is not log terminal, then $R$ has FFRT only in exceptional 
cases where the characteristic $p$ divides a weight of $\mathfrak C$. \\

\keywords{\noindent \emph{Keywords:} Finite $F$-representation type; Frobenius push-forward; Graded ring; Orbifold curve}
\end{abstract}


\section{Introduction}

The notion of finite $F$-representation type for a ring $R$ of characteristic $p>0$ was introduced by Smith and Van den Bergh \cite{SVdB}. 
Its definition requires the technical assumption that $R$ is $F$-finite and either a complete 
local domain or an Noetherian $\mathbb{N}$-graded domain. For each $e\in\mathbb{N}$ we identify the 
ring $R^{1/p^e}$ of $p^e$-th roots of $R$ with the $e$-times iterated Frobenius push-forward of the 
structure sheaf of $\Spec R$. We say that $R$ has \emph{finite $F$-representation type} (FFRT for short), if 
the set of isomorphism classes of indecomposable modules appearing as a direct summand of $R^{1/p^e}$ 
as an $R$-module for some $e$, is finite. 

If $R$ is a regular local ring or a polynomial ring, then it has FFRT, since $R^{1/p^e}$ is a free $R$-module. 
It is shown in [SVdB] that a finite direct summand of a ring of FFRT also has FFRT. 
In particular, $R$ has FFRT if $R$ has a tame quotient singularity such as the invariant subring of a finite group of order not divisible by $p$ acting on a 
regular local ring.
Also, it is known that a Frobenius sandwich singularity such as 
$R = k[x,y,z]/\langle z^p-f(x,y)\rangle$ has FFRT (cf.\ \cite{Sh}). On the other hand, simple elliptic 
singularities, and more generally, cone singularities over a smooth curve of genus $g\ge 1$, are known not 
to have FFRT \cite{SVdB}. 

In this paper, we will explore the FFRT property for normal surface singularities with $k^*$-action, in 
other words, a two-dimensional normal graded ring $R$ over a field $k=R_0$. 
In this case, $C=\Proj R$ is a 
smooth curve and there is an ample $\Q$-divisor on $C$ such that 
$R \cong R(C,D) = \bigoplus_{m \ge0} H^0(C,\mathcal{O}_C(\lfloor m D\rfloor))$. 
The result for cone singularities implies that we cannot expect for the FFRT property unless $C \cong \PP^1$.
Thus the critical case is when $R=R(\PP^1,D)$ and it is not log terminal. As far as the authors are 
aware, the FFRT property of such an $R$ is wide open. Specifically we aim to answer the following: 
\begin{question}[Holger Brenner, 2007]
Does the ring $R=k[x,y,z]/\langle x^2+y^3+z^7\rangle$ have FFRT?
\end{question}
\noindent
We note that the ring $R$ in Brenner's question is 
given by a $\Q$-divisor $D=\frac12(\infty)-\frac13(0)-\frac17(1)$ on $\PP^1$. If $p=2,3,7$, it is a 
Frobenius sandwich and so has FFRT \cite{Sh}. Our main result in this paper is the following, which 
implies that $R=k[x,y,z]/\langle x^2+y^3+z^7\rangle$ does not have FFRT unless $p=2,3,7$. 

\begin{mainthm}[ = Theorem \ref{MainThm}]
Let $R=R(\PP^1,D)$ for an ample $\Q$-divisor $D$ on $\PP^1$. If $R$ does not have a log terminal singularity 
and if the characteristic $p$ does not divide any denominator appearing in the rational coefficients of 
$D$, then $R$ does not have FFRT.
\end{mainthm}
 
In the study of the structure of $R^{1/p^e}$ we have difficulty with the non-integral rational coefficients 
of $D$. 
To overcome this difficulty we will introduce an orbifold curve (called a weighted projective line when 
$C \cong \PP^1$), which is a sort of Deligne-Mumford stack 
$\mathfrak{C}$ with coarse moduli map $\pi \colon \mathfrak{C} \to C$. This is the ``minimal covering" 
of $C=\Proj R$ on which $D$ becomes integral. Thus it serves as a very useful tool to deal with rational 
coefficients divisors (cf.\ \cite{MO}). 
The FFRT property of $R$ is then rephrased in terms of a global analogue 
of FFRT property for a pair $(\mathfrak{C},L)$ associated with the line bundle $L=\mathcal{O}_\mathfrak{C}(\pi^*D)$. 
In particular, by studying the structure of the Frobenius push-forwards $F^e_*\mathcal{O}_\mathfrak{C}$ on the 
orbifold curve $\mathfrak{C}$, we prove our results. 

Let us give an overview of the proof of our main theorem in a bit more detail. 
The assumption that the singularity of $R$ is not log 
terminal is equivalent to the condition $\delta_{\mathfrak{C}} \ge 0$, where $\delta_{\mathfrak{C}}$ is the 
degree of the canonical bundle on $\mathfrak{C}$. 
When $\delta_{\mathfrak{C}}=0$, we have an \'etale 
covering $\varphi \colon E \to \mathfrak{C}$ from an elliptic curve $E$, via which the Frobenius push-forward 
$F^e_*\mathcal{O}_\mathfrak{C}$ is related to that on $E$. Since the structure of $F^e_*\mathcal{O}_E$ on an elliptic curve 
$E$ is well-understood \cite{A}, \cite{Od}, we can deduce the result for $F^e_*\mathcal{O}_\mathfrak{C}$, whose structure 
differs according to whether $E$ is ordinary or supersingular. 
When $\delta_{\mathfrak{C}} > 0$, we prove 
the stability of $F^e_*\mathcal{O}_\mathfrak{C}$ by the method of \cite{KS}, \cite{Su} for non-orbifold curves of genus 
$>1$, from which it follows that $F^e_*\mathcal{O}_\mathfrak{C}$ is indecomposable. 

This paper is organized as follows. In Section 2 we review some fundamental facts on normal graded rings  
and root stacks. 
In Section 3 we 
rephrase the FFRT property of $R=R(C,D)$ in terms of a global FFRT property on the orbifold curve $\mathfrak{C}$ constructed from $(C,D)$. 
Sections 4 and 5 are
devoted to the study of weighted projective lines with $\delta_\mathfrak{C} \le 0$. 
In Section 4, we apply 
Crawley-Boevey's result \cite{CB} to deduce the FFRT property of $R=R(\PP^1,D)$ when $\delta_\mathfrak{C}<0$. 
In Section 5, we study the case $\delta_\mathfrak{C}=0$, using a covering $\varphi$ mentioned above, to prove that 
$\mathfrak{C}$ does not have global FFRT property. 
In Section 6, we slightly generalize Sun's result \cite{Su} on the stability of Frobenius push-forwards to 
orbifold curves with $\delta_\mathfrak{C}>0$. 
In Section 7, we summarize the result obtained in the previous 
sections with the main theorem (Theorem \ref{MainThm}) and discuss the exceptional cases where $p$ 
divides denominators of the $\Q$-divisor $D$. 

\begin{acknowledgements} 
NH thanks Holger Brenner for many useful comments to the manuscript.
RO thanks Isamu Iwanari for answering many questions on stacks, and Masao Aoki for calling his attention to the book \cite{Ol} which was helpful in writing this paper.
He also thanks Shunsuke Takagi for informing him of an example in \cite[Remark 3.4. (2)]{TT}.
The authors are grateful to the referees for their careful reading of the paper.
NH is partially supported by Grant-in-Aid for Scientific Research 16K05092, JSPS.
RO is partially supported by a Waseda University Grant for Special Research Projects (Project number: 2017S-077).
\end{acknowledgements}


\section{Preliminaries}
In this paper, we fix an algebraically closed field $k$.
The definition of the FFRT property requires the ring $R$ under consideration to be complete local or 
graded. In this paper we focus on the graded case, as follows. 
Let $R=\bigoplus_{m \ge 0} R_{m}$ be a Noetherian  normal $\mathbb N$-graded ring  over an algebraically 
closed field $R_0=k$ with $\dim R \ge 2$. 
We denote by $X$ the normal projective variety $X=\Proj R$.


\subsection{Pinkham-Demazure construction of a normal graded ring}
By \cite{D}, \cite{P} the graded ring $R$ is described as follows: There exists an ample $\Q$-Cartier divisor 
$D$ on $X$ such that 
$$R \cong R(X,D) = \bigoplus_{m \ge 0} H^0(X,\mathcal{O}_X(\lfloor mD \rfloor))t^m,$$
where $t$ is a homogeneous element of degree $1$. We write the $\Q$-divisor $D$ as 
$$D=\sum_{i=1}^n \frac{s_i}{r_i} D_i,$$ 
where $D_1,\dots,D_n$ are distinct prime divisors on $X$, and $r_i>0$ and $s_i$ are coprime integers. 

In the notation above, let
$$Y=\mathrm{Spec}_X\left(\bigoplus_{m\ge 0} \mathcal{O}_X(\lfloor mD\rfloor)t^m\right) \  
\mathop{\mathrm{and}}\nolimits\ 
   U=\mathrm{Spec}_X\left(\bigoplus_{m\in\Z} \mathcal{O}_X(\lfloor mD\rfloor)t^m\right).$$
Then $U$ is an open subset of $Y$ and we have the following commutative diagram. 
\begin{eqnarray}
\label{fund1}
\xymatrix{
& U \ar@{^(->}[d]  \ar[r]^-\cong & Z\setminus V(R_+) \ar@{^(->}[d]\\
\text{Ex}( \varphi) \ar[dr]^{\cong} \ar@{^(->}[r] & Y \ar[d]^{\sigma} \ar[r]^-\varphi & Z=\Spec R \\
 & X & 
}
\end{eqnarray}
Here $\mathop{\mathrm{Ex}}\nolimits(\varphi) =Y\setminus U$ is endowed with reduced closed 
subscheme structure. Then $\mathop{\mathrm{Ex}}\nolimits(\varphi) $ is a section of the structure 
morphism $\sigma\colon Y \to X$ and also the exceptional divisor of the graded blowup 
$\varphi \colon Y \cong \Proj(\bigoplus_{m \ge 0} R_{\ge m}) \to \Spec R$, where 
$R_{\ge m}  = \bigoplus_{m' \ge m} R_{m'}$. 
Also, $\sigma \colon Y \to X$ has an $\mathbb{A}^1$-bundle structure over the locus $X' \subseteq X$ 
where $D|_{X'}$ is an integral Cartier divisor. Namely, $\sigma^{-1}(X')$ is the line bundle associated to 
the invertible sheaf $\mathcal{O}_{X'}(D|_{X'})$ on $X'$. 
On the other hand, if we denote by $F_i$ the reduced fiber of $\sigma$ over the prime divisor $D_i$, 
then $\sigma^*D_i=r_iF_i$ (see \cite{D}). 


\subsection{Finite $F$-representation type}
We assume that the characteristic of $k$ is $p>0$. Then any scheme $S$ over $k$ admits the Frobenius 
morphism $F \colon S \to S$ associated with the $p$-th power ring homomorphism $\mathcal{O}_S \to F_*\mathcal{O}_S$. 
By our assumption, the graded ring $R$ is $F$-finite, i.e., the Frobenius on $Z=\Spec R$ is a finite morphism. 
For each $e=0,1,2,\dots$, the $e$-times Frobenius push-forward $F^e_*R$ of the graded ring $R$ is 
identified with the ring $R^{1/p^e}$, which has a natural $\frac1{p^e}\Z$-grading. Hence we can consider 
$R^{1/p^e}$ as an object of the category of finitely generated $\Q$-graded $R$-modules. In this category, 
we define an equivalence $\sim$ of objects by a graded isomorphism which admits a degree shift: 
Namely, 
for $\Q$-graded modules $M,N$, we define $M \sim N$ if $N \cong M(\alpha)$ via a degree-preserving 
isomorphism for some $\alpha\in\Q$. Now by the Krull-Schmidt theorem, we have a unique decomposition 
$$R^{1/p^e} = M_1^{(e)} \oplus\cdots\oplus M_{m_e}^{(e)}$$
in the category of finitely generated $\Q$-graded $R$-modules for $e=0,1,2,\dots$, with $M_i^{(e)}$ 
indecomposable. 

\begin{definition}[\cite{SVdB}]
\label{ffrt}
We say that $R$ has \emph{finite $F$-representation type} $($FFRT$)$ if the set of equivalence 
classes $\{M_i^{(e)} \,|\, e=0,1,2,\dots; i=1,\dots,m_e\}/ \sim$ is finite. 
\end{definition}

For $q=p^e$ we want to know the decomposition of the $R$-module $R^{1/q}$. 
The graded ring structure 
of $R=R(X,D)$ allows us to decompose 
$R^{1/q} = \bigoplus_{l\ge0} H^0(X,F^e_*\mathcal{O}_X(\lfloor lD\rfloor))t^{l/q}$
as $R^{1/q} = \bigoplus_{i=0}^{q-1} (R^{1/q})_{i/q \mod \Z}$, where 
$$
(R^{1/q})_{i/q \mod\Z} 
            = \bigoplus_{0 \le l \equiv i \mod q} H^0(X,F^e_*\mathcal{O}_X(\lfloor l D\rfloor))t^{l /q} 
            \cong \bigoplus_{m\ge 0} H^0(X,F^e_*\mathcal{O}_X(\lfloor (qm+i)D\rfloor)) 
$$
is an $R$-summand of $R^{1/q}$ for $i=0,1,\dots,q-1$. If $D$ is an integral Cartier divisor, then 
$F^e_*\mathcal{O}_X(\lfloor (qm+i)D\rfloor) \cong \mathcal{O}_X(D)^{\otimes m}\otimes F^e_*\mathcal{O}_X(iD)$ by the projection 
formula. 
Thus in this case, the decomposition of the $R$-module $(R^{1/q})_{i/q \mod\Z}$ depends on 
the decomposition of the vector bundle $F^e_*\mathcal{O}_X(iD)$ on $X$. However, this observation fails when $D$ 
is not integral. To overcome this difficulty, we will introduce a root stack associated with the pair $(X,D)$, 
which allows us to treat $D$ as if it is an integral divisor. 


\subsection{Algebraic stacks}
\label{alg}
In the remaining part of this section (subsections 2.3--2.8), we roughly review 
the construction and basic notions of stacks under the philosophy that most 
of known results in the category of schemes naturally extend to stacks. The 
contents of this section may be known to specialists, but we will put them to 
keep the consistency of the paper. 
We recommend readers unfamiliar 
with stacks to skip this section, just keeping in mind the above philosophy, 
and refer back this section when it is necessary. 
Our exposition and notation here are based on [Ol], [V2]; see also references therein for more 
details.

To introduce algebraic stacks, we recall terminology of categories.
\emph{Groupoids} are categories whose morphisms are all isomorphisms.
The simplest groupoids are sets, that is, categories whose morphisms are only identities.

For a category $C$, a \emph{category over} $C$ is a pair $(D, \pi)$, where $D$ is a category, and  $\pi \colon D \to C$ is a functor.
For an object $u$ and a morphism $\phi$ in $D$, we write $u \mapsto U$ and $\phi \mapsto f$  when $\pi(u)=U$ and $\pi(\phi)=f$.
A morphism $\phi \colon u \to v$ in $D$ is called \emph{cartesian} if for any other object $w$ in $D$ and a morphism $\psi \colon w \to v$ in $D$ with a factorization 
$$
\pi(\psi) \colon \pi(w) \stackrel{h}{\to} \pi(u) \stackrel{\pi(\phi)}{\to} \pi(v),
$$ 
there exists a unique morphism $\lambda \colon w \to u$ such that $\psi = \phi \circ \lambda$ and $\pi(\lambda) = h$.
In a picture:
$$
\xymatrix{
w \ar@{|->}[d] \ar@{-->}[r] \ar@/^18pt/[rr]^{\psi} & u \ar@{|->}[d] \ar[r]^{\psi} & v \ar@{|->}[d]\\
\pi(w) \ar[r]^{h} & \pi(u) \ar[r]^{\pi(\psi)} & \pi(v).}
$$

For an object $U$ in $C$, we define a category $D(U)$ as follows.
Objects of $D(U)$ are objects $u$ in $D$ such that $\pi(u)=U$.
For $u, u'$ in $F(U)$, morphisms $u \to u'$ in $D(U)$ are morphisms $\phi \colon u \to u'$ in $D$ such that $\pi(\phi) = \id_{U}$. 

\begin{definition}\label{fiber}
\begin{enumerate}
\item A \emph{fibered category over} $C$ is a category $\pi \colon D \to C$ over $C$ such that for every $v$ in $D(V)$ and morphism $f \colon U \to V$ in $C$, there exists a cartesian morphism $\phi \colon u \to v$ such that $\pi(\phi)=f$ (so in particular $u \in D(U)$).
\item A \emph{category fibered in groupoids over} $C$ is a fibered category $\pi \colon D \to C$ such that for every objct $U$ in $C$, a category $D(U)$ is a groupoid.
\end{enumerate}
\end{definition}

In Definition \ref{fiber} (1), the object $u$ is called a \emph{pull-back} of $v$ by $f$.
This is unique up to a unique isomorphism.
We also call $\pi$ a \emph{structure morphism} of the fibered category.
A morphism of categories fibered in groupoids is defined to be a functor strictly compatible with structure morphisms. 
If $g, g' \colon D_{1} \to D_{2}$ are morphisms of fibered categories $\pi_{i} \colon D_{i} \to C$ over $C$ for $i=1,2$, then a \emph{base preserving natural transformation} $\alpha \colon g \to g'$ is a natural transformation such that for each object $u$ of $D_{1}(U)$, the corresponding isomorphism $\alpha(u) \colon g(u) \cong g'(u)$ is in $D_{2}(U)$, that is, $\pi_{2}(\alpha(u))=\id_{U}$. 
Fiber products of categories fibered in groupoids are defined in \cite[3.4.12]{Ol}.
When a category $C$ has a Grothendieck topology, a category fibered in groupoids over $C$ is called a \emph{stack} over $C$ if it satisfies certain descent condition with respect to the topology of $C$ (cf. \cite[4.6.1]{Ol}).

In the following, we consider stacks over the category $\Sch_{k}$ of schemes over an algebraically closed field $k$ with the \'etale topology. 
So in particular, we have structure morphisms $\pi \colon \mathfrak{X} \to \Sch_{k}$, and for any scheme $S$ over $k$ we have a groupoid $\mathfrak{X}(S)$.
Isomorphism classes of objects in $\mathfrak{X}(S)$ are called $S$-valued points of $\mathfrak{X}$.
Morphisms $\mathfrak{X} \to \mathfrak{Y}$ of stacks $\mathfrak{X}$ and $\mathfrak{Y}$ over $k$ are morphisms of categories fibered in groupoids.
In particular, a morphism induces functors $\mathfrak{X}(S) \to \mathfrak{Y}(S)$ for schemes $S$.
For two morphisms $\varphi, \varphi' \colon \mathfrak{X} \to \mathfrak{Y}$, we consider base preserving natural transformations $\alpha \colon \varphi \cong \varphi'$.
They are simply called isomorphisms between $\varphi$ and $\varphi'$.

Morphisms $\mathfrak{X} \to \mathfrak{Y}$ are called \emph{representable} if for any $k$-scheme $S$ the fiber product $\mathfrak{X} \times_{S} \mathfrak{Y}$ is an algebraic space (cf. \cite[Ch 5]{Ol} for algebraic spaces).
For representable morphisms, certain properties of morphisms of algebraic spaces (such as \emph{smooth}) are defined using pull-back by schemes (cf. \cite[8.2.9]{Ol}).

Stacks $\mathfrak{X}$ over $\Sch_k$ are called \emph{algbraic} if they satisfy the following conditions (cf. \cite[8.1.4]{Ol}):
\begin{enumerate}
\item the diagonal morphism $\Delta \colon \X \to \X \times_{k} \X$ is representable
\item there exists a smooth surjection $U \to \X$ from a scheme $U$. 
\end{enumerate}
The first condition implies that any morphism $S \to \X$ from a scheme $S$ is representable.
When we can take a smooth (resp. locally of finite type) scheme as $U$ in the second condition, we say that $\X$ is \emph{smooth} (resp. \emph{locally of finite type}).
Algebraic stacks $\mathfrak{X}$ are called \emph{Deligne-Mumford} if we have an \'etale surjection $U \to \mathfrak{X}$ from a scheme $U$. 

Any $k$-scheme $S$ gives a category $\Sch_{/S}$ fibered in groupoids over $\Sch_{k}$, where $\Sch_{/S}$ is the category of $S$-schemes, and a structure morphism $\Sch_{/S} \to \Sch_{k}$ is defined by forgetting $S$-scheme structures.
We identify $S$ with $\Sch_{/S}$, and view $S$ as a Deligne-Mumford algebraic stacks.
We also define a groupoid $\Hom_{\Sch_{k}}(S, \mathfrak{X})$, whose objects are morphisms $S \to \mathfrak{X}$ of algebraic stacks.
For such morphisms $\varphi_{1}, \varphi_{2} \colon S \to \mathfrak{X}$, an isomorphism $\alpha \colon \varphi_{1} \cong \varphi_{2}$ in $\Hom_{\Sch_{k}}(S, \mathfrak{X})$ is defined to be a base preserving natural transformation.

By $2$-Yoneda lemma \cite[3.6.2]{V2}, we have an equivalence $\Hom_{\Sch_{k}}(S, \mathfrak{X}) \to \mathfrak{X}(S)$ of groupoids sending $\varphi $ to $\varphi(\id_{S})$.
In the following, we identify these two groupoids via this equivalence.


\subsection{Sheaves on algebraic stacks}
\label{coh}
For an algebraic stack $\mathfrak{X}$ over $k$, we define the \emph{lisse-\'etale site} $\Lis(\mathfrak{X})$ \emph{of} $\X$ as follows.
An object of $\Lis(\mathfrak{X})$ is an $\mathfrak{X}$-scheme $(T,t)$, where $T$ is a $k$-scheme and $t \colon T \to \mathfrak{X}$ is a smooth morphism.
A morphism $(T',t') \to (T, t)$ in $\Lis(\mathfrak{X})$ is a pair $(f, f^{b})$, where $f \colon T' \to T$ is a morphism of schemes and $f^{b}$ is an isomorphism $t' \to t \circ f$ of functors.
A collection $\lbrace (f_{i}, f^{b}_{i}) \colon (T_{i}, t_{i}) \to (T, t) \rbrace$ is defined to be a \emph{covering} in $\Lis(\X)$ if $\lbrace f_{i} \colon T_{i} \to T \rbrace$ is an \'etale covering.

Sheaves on $\X$ are defined to be sheaves on the site $\Lis(\X)$.
For example, we define $\mathcal{O}_{\X}$ by assigning $\Gamma(T, \mathcal{O}_{T})$ to any object $(T, t)$ of $\Lis(\X)$, and the pull-back $\Gamma(T, \mathcal{O}_{T}) \to \Gamma(T', \mathcal{O}_{T'})$ to any morphism $(f, f^{b}) \colon (T' , t') \to (T, t)$ in $\Lis(\X)$. 
For any sheaf $\F$ of $\mathcal{O}_{\X}$-module on $\X$ and object $(T, t)$ of $\Lis(\X)$, we define a sheaf $\F_{(T,t)}$ of $\mathcal{O}_{T}$-module on $T$ by the restriction of $\F$ to the \'etale site of $T$.
For each morphism $(f, f^{b}) \colon (T',t') \to (T, t)$, we have a natural morphism $\rho_{(f, f^{b})} \colon f^{\ast} \F_{(T,t)} \to \F_{(T',t')}$.
We say that $\F$ is \emph{quasi-coherent} if $\F_{(T, t)}$ is quasi-coherent for any object $(T,t)$, and $\rho_{(f, f^{b})}$ is an isomorphism for any morphism $(f, f^{b}) \colon (T' , t') \to (T, t)$ in $\Lis(\X)$. 
We write by $\text{QCoh} \X$ the category of quasi-coherent sheaves on $\X$.
A quasi-coherent sheaf $\F$ on $\X$ is called \emph{locally free} if each $\F_{(T,t)}$ is locally free.
%
%

Let $X \to \X$ be a smooth surjection with $X$ an algebraic space.
In the following, we define a coskelton of $X \to \X$, that is, a functor $X_{\bullet}$ from the opposite category of the simplicial category $\Delta$ to the category of algebraic spaces.
Here objects of $\Delta$ are finite posets $[n]=\lbrace 0, 1, \ldots, n \rbrace$, and morphisms in $\Delta$ are order preserving maps.
To each $[n]$, we assign 
\begin{eqnarray}
\label{coskelton}
X_{n}=\overbrace{X \times_{\X} \cdots \times_{\X} X}^{n+1},
\end{eqnarray}
and to a morphism $\delta \colon [n] \to [m]$, we assign 
$$
X_{\bullet}(\delta) = \p_{\delta(0)} \times \cdots \times \p_{\delta(n)} \colon X_{m} \to X_{n},
$$
where $\p_{i}$ denotes the $i$-th projection $X_{m} \to X$.
For each $[n]$, let $e_{n} \colon X_{n} \to \X$ denote the augmentation morphism such that $e_{n} \circ X_{\bullet} (\delta)=e_{m}$ for every morphism $\delta \colon [n] \to [m]$ in $\Delta$.

For quasi-coherent sheaves $\F$ on $\X$, data $\lbrace \F_{(X_{n}, e_{n})}, \rho_{(X_{\bullet}(\delta), \id)}\rbrace$ give quasi-coherent $\mathcal{O}_{X_{\bullet}}$-modules, whose definition is in \cite[9.2.12]{Ol}. 
This gives an equivalence of categories 
\begin{eqnarray}
\label{simplicial}
r \colon \text{Qcoh} \X \to \text{Qcoh} X_{\bullet},
\end{eqnarray}
where $\text{Qcoh} X_{\bullet}$ is the category of quasi-coherent $\mathcal{O}_{X_{\bullet}}$-modules.
Push-forward and pull-back of quasi-coherent sheaves are defined in \cite[9.2.5, 9.3]{Ol}.

When $\X$ is Deligne-Mumford, we can define quasi-coherent sheaves on $\X$ using the \'etale site of $\X$ by \cite[Proposition 9.1.18]{Ol}. 
For example, we have a quasi-coherent sheaf $\Omega_{\X/k}$ of differentials on $\X$ as in \cite[(7.20) (ii)]{V}, and differential maps $d \colon \mathcal{O}_{\X} \to \Omega_{\X/k}$.
A Deligne-Mumford stack $\X$ is smooth if and only if $\Omega_{\X/k}$ is locally free.
In this case, we put $\omega_{\X} = \det \Omega_{\X/k}$ and call it the \emph{canonical sheaf} on $\X$.

Proper morphisms of algebraic stacks are defined as in \cite[10.1.5]{Ol}. 
When $\X$ is proper over $k$, a quasi-coherent $\mathcal{O}_{\X}$-module $\F$ is called \emph{coherent} if $\F_{(T,t)}$ is coherent for each object $(T, t)$ in $\Lis(\X)$.
We write by $\Coh\X$ the category of coherent sheaves on $\X$.

When $\X$ is smooth proper Deligne-Mumford stacks, by \cite[Theorem 2.22]{N2} we have a Serre duality isomorphism from $\Ext_{\mathfrak{X}}^{i}(\F, \F')$ to the dual of  $\Ext_{\mathfrak{X}}^{d-i}(\F', \F \otimes \omega_{\mathfrak{X}})$ for coherent sheaves $\F, \F'$ on $\mathfrak{X}$, where $d=\dim \mathfrak{X}$.


\subsection{Affine morphisms of algebraic stacks}
\label{aff}

A morphism $f \colon \mathfrak{Z} \to \X$ of algebraic stacks is \emph{affine} (resp. \emph{finite}), if for any morphism $S \to \X$ from a scheme $S$, the pull-back $\mathfrak{Z} \times_{\X} S \to S$ is affine (resp. finite).

By \cite[10.2.4]{Ol}, for a finite morphism $f \colon \mathfrak{Z} \to \X$, we have an equivalence from $\text{Coh}\mathfrak{Z}$ to the category of coherent $f_{\ast} \mathcal{O}_{\mathfrak{Z}}$-modules on $\X$ as in \cite[Ex. II, 5.17e]{Ha}.
Hence we can construct a functor $f^{!} \colon \text{Coh} \X \to \text{Coh}\mathfrak{Z}$ such that we have
$$
f_{\ast} \mathcal{H}om_{\mathfrak{Z}}(\F, f^{!} \mathcal{G}) \cong \mathcal{H}om_{\X}(f_{\ast} \F, \G)
$$
for $\F$ in $\text{Coh}\mathfrak{Z}$ and $\G$ in $\text{Coh}\X$ as in \cite[III, Ex. 6.10]{Ha}.

In particular, when $\X$ is a smooth Deligne-Mumford stack, we see that we have an adjunction isomorphism 
\begin{eqnarray}
\label{adjunction}
\mathcal{H}om_{\X}(f_{\ast} \F, \omega_{\X}) \cong \F^{\vee} \otimes f_{\ast} \omega_{\mathfrak{Z}} 
\end{eqnarray} 
for any locally free sheaf $\F$ on $\mathfrak{Z}$ by the similar arguments as in \cite[III, Ex. 7.2]{Ha}.


\subsection{Frobenius morphisms}
\label{frob}
When the characteristic $p$ of $k$ is positive, we define \emph{Frobenius morphisms} $F \colon \mathfrak{X}^{(1)} \to \mathfrak{X}$ of algegraic stacks $\mathfrak{X}$ over $k$ as follows.
Here $\mathfrak{X}^{(1)}$ is equal to $\mathfrak{X}$ as categories, but we define a structure morphism $\mathfrak{X}^{(1)} \to \Sch_{k}$ by composing structure morphisms $\mathfrak{X} \to \Sch_{k}$ with the usual Frobenius morphism $F \colon \Sch_{k} \to \Sch_{k}$. 
It is easily shown that $\mathfrak{X}^{(1)}$ are algebraic stacks over $k$.
We can regard objects of $\mathfrak{X}^{(1)}$ over $\sigma^{(1)} \colon S \to \Spec k$ as pairs $(\sigma, \xi)$ of $\sigma \colon S \to \Spec k$ such that $F \circ \sigma = \sigma^{(1)}$, and objects $\xi$ of $\mathfrak{X}$ over $\sigma \colon S \to \Spec k$.
 
In this subsection, for a scheme $\sigma \colon S \to \Spec k$ over $k$, we often write $F \circ \sigma$ by $\sigma^{(1)}$, and $(S, \sigma^{(1)})$ by $S^{(1)}$.  
We note that since $\X$ and $\X^{(1)}$ are equal as categories (or algebraic stacks over $\Z$), some notion such as the category of quasi-coherent sheaves and the Chow ring of them are naturally identified.

For any object $u=( \sigma, \xi)$ in $\mathfrak{X}^{(1)}(S^{(1)})$, we define an object $F(u)$ in $\mathfrak{X}(S^{(1)})$ by the pull-back of $\xi$ by the Frobenius morphism $F \colon S^{(1)} \to S$ so that we have a cartesian morphism $F(u) \to \xi$ in $\mathfrak{X}$.
For a morphism $\phi \colon u \to v$ in $\mathfrak{X}^{(1)}$, we define a morphism $F(\phi) \colon F(u) \to F(v)$ in $\mathfrak{X}$ by the universal property of the cartesian morphism $F(v) \to v$.

The definition of $F \colon \mathfrak{X}^{(1)} \to \mathfrak{X}$ depends on choices of pull-backs $F^{\ast} \xi$ for each $u=(\sigma, \xi) \in \mathfrak{X}^{(1)}$.
But if we define $F_{i} \colon \mathfrak{X}^{(1)} \to \mathfrak{X}$ for $i=1,2$ by different choices, then we have a unique isomorphism $F_{1} \cong F_{2}$ compatible with cartesian morphisms $F_{i}(u) \to \xi$.
Similarly, for a morphism $\psi \colon \mathfrak{Y} \to \mathfrak{X}$ of algebraic stacks, we also have a commutative diagram
\begin{eqnarray}
\label{stackdiagram}
\xymatrix{
\mathfrak{Y}^{(1)} \ar[r]^{F} \ar[d]^{\psi} & \mathfrak{Y} \ar[d]^{\psi} \ar@{<=}[ld]\\
\mathfrak{X}^{(1)} \ar[r]^{F} & \mathfrak{X},
}
\end{eqnarray}
where $\psi \colon \mathfrak{Y}^{(1)} \to \mathfrak{X}^{(1)}$ is the same functor as $\psi \colon \mathfrak{Y} \to \mathfrak{X}$.

\begin{example}
\label{quot}
For a group scheme $G$ acting on a scheme $U$ over $k$, we write by $[U/G]$ the quotient stack of $U$ by $G$ as in \cite[Example 8.1.12]{Ol}. 
We describe the Frobenius morphism of a quotient stack $\X=[U / G]$.
Since $k$ is algebraically closed, the Frobenius morphism $F \colon \Spec k \to \Spec k$ is an isomorphism, and we have a natural isomorphism $\Psi \colon [U^{(1)} / G^{(1)}] \cong \X^{(1)} $.

We write by $\Phi \colon [U^{(1)}/G^{(1)}] \to \X=[U/G]$ the induced morphism by $F \colon U^{(1)} \to U$ which is equivariant through $F \colon G^{(1)} \to G$.
Then we have an isomorphism $\eta \colon \Phi \cong F \circ \Psi$, 
that is, we have the following commutative diagram:
$$
\xymatrix{
[U^{(1)}/G^{(1)}] \ar[dd]_{\Phi} \ar[r]^-{\Psi} & \X^{(1)} \ar[dd]^{F} \\
\ar@{=>}[r]^{\eta}& \\
[U / G] \ar@{=}[r] & \X 
}
$$
Hence we have $\X^{(1)} \times_{\X} U \cong [U^{(1)} \times G / G^{(1)}]$.

When $F \colon G^{(1)} \to G$ is an isomorphism, then we have $\mathfrak{X}^{(1)} \times_{\mathfrak{X}} U \cong U^{(1)}$, and the diagram \eqref{stackdiagram} for $\mathfrak{Y}=U$ is cartesian.
This is the case when $G=\mu_{m} = \Spec k[\zeta]/\langle \zeta^{m}-1\rangle$ for a positive integer $m$ co-prime to $p$.  
An orbifold curve, introduced in the next section, with the weights 
co-prime to $p$ is locally isomorphic to such a quotient stack. 

On the other hand, when $G=\mu_{p} = \Spec k[\zeta]/\langle \zeta^{p}-1\rangle$, the Frobenius morphism $G^{(1)} \to G$ is decomposed through the unit map $\Spec k \to  G$. 
Hence we have $\X^{(1)} \times_{\X} U \cong \X^{(1)} \times G$.
In particular, the Frobenius morphism $\X^{(1)} \to \X$ is not affine in this case.
\end{example}

When $\X$ is Deline-Mumford, we have an \'etale surjection $X \to \X$ and coskeltons $X_{\bullet}$ and $X_{\bullet}^{(1)}$ as in \eqref{coskelton}.
We further assume that $\X$ is locally a quotient stack of a smooth scheme by $\mu_{m}$ for a positive integer $m$ co-prime to $p$.
Then $F \colon \X^{(1)} \to \X$ is representable, and the diagram \eqref{stackdiagram} is cartesian for $\mathfrak{Y} = X$ by Example \ref{quot}. 
Hence we have a cartesian diagram
$$
\xymatrix{
X_{m}^{(1)} \ar[r]^{F} \ar[d]_{X_{\bullet}(\delta)} & X_{m} \ar[d]^{X_{\bullet}(\delta)} \\
X_{n}^{(1)} \ar[r]^{F} & X_{n},
}
$$
for any morphism $\delta \colon [n] \to [m] $ in $\Delta$.
We can compute the push-forward $F_{\ast}$ and the pull-back $F^{\ast}$ of the Frobenius morphism $F \colon \X^{(1)} \to \X$ by $F_{\ast} \colon \text{Qcoh} X_{n}^{(1)} \to \text{Qcoh} X_{n}$ and $F^{\ast} \colon \text{Qcoh} X_{n} \to \text{Qcoh} X_{n}^{(1)}$ via the equivalence \eqref{simplicial}.
We can also make various constructions of coherent sheaves on $\X$ and describe differential maps  in terms of $\mathcal{O}_{X_{\bullet}}$-modules via the equivalence \eqref{simplicial}.
Furthermore 
we can see that $F \colon \X^{(1)} \to \X$ is flat and finite since the pull-back $F \colon X^{(1)} \to X$ by an \'etale cover is flat and finite.
Thus the usual arguments for coherent sheaves concerning Frobenius morphisms also hold for such a Deligne-Mumford stack $\X$.

The similar argument holds as above for the iterated Frobenius morphism $F^{e} \colon \X^{(e)} \to \X$. 
Here  $e$ is a positive integer, and $\mathfrak{X}^{(e)}$ denotes a $k$-stack with the structure morphism equal to the composition of $\mathfrak{X} \to \Sch_{k}$ with $F^{e} \colon \Sch_{k} \to \Sch_{k}$. 
In the remaining part of the paper, we will simply write $\X=\X^{(e)}$ since any confusion is not supposed to occur.
We also use notation $F_{\X} \colon \X \to \X$ for the Frobenius morphism $F$, when we want to emphasize $\X$.

\subsection{Root stacks}
Here we briefly review the notion of root stacks (cf. \cite[10.3]{Ol}).
\begin{definition}
Let $\X$ be an algebraic stack. 
A \emph{generalized effective Cartier divisor on} $\X$ is a pair $D=(L, \rho)$, where $L$ is a line bundle on $\X$, and $\rho \colon L \to \mathcal{O}_{\X}$ is a morphism of $\mathcal{O}_{\X}$-modules.  
If $D=(L,\rho)$ and $D'=(L', \rho')$ are two generalized effective Cartier divisors on $\X$, then an isomorphism between them is an isomorphism $\sigma \colon L \to L'$ of $\mathcal{O}_{\X}$-modules such that the following diagram commutes:
$$
\xymatrix{
L \ar[rr]^{\sigma} \ar[dr]_{\rho} && L' \ar[dl]^{\rho'} \\
& \mathcal{O}_{\X} &
}
$$ 
\end{definition}

In the above definition, we often write $L$ by $\mathcal{O}_{\X}(-D)$, and the dual by $\mathcal{O}_{\X}(D)$.
We define addition of $D=(L,\rho)$ and $D'=(L', \rho')$ by $D+D'=(L \otimes L', \rho \otimes \rho')$.
For a non-negative integer $r$, we put $r D= (L^{\otimes r}, \rho^{\otimes r})$.
 
Let $X$ be a $k$-scheme, and $D_{1}, \ldots, D_{n}$ generalized effective Cartier divisors on $X$.
We consider the associated section $\xi_{i} \colon \mathcal{O}_{X}(-D_{i}) \to \mathcal{O}_{X}$ and the induced morphism 
$\mbi \xi \colon X \to [\mathbb{A}^1/k^{\ast}]^{n}$, where $\mathbb{A}^1=\mathbb{A}^1_k$ is the affine line on which $k^*$ acts by multiplication. 

For $\mbi r=(r_1,\ldots, r_n) \in \Z_{\ge 0}^n$, an \emph{$\mbi r$-th root stack} $\pi \colon \mathfrak{X} \to X$ 
of $X$ along $(D_{1}, \ldots, D_{n})$ is defined as follows.
For a $k$-scheme $S$, objects of the groupoid $\mathfrak{X}(S)$ are data 
$$
\xi = (f \colon S \to X, E_{1}, \ldots, E_{n}, \alpha_{1}, \ldots, \alpha_{n}), 
$$
where $E_{i}$ are generalized effective Cartier divisors on $S$, and $\alpha_{i} \colon r_{i} E_{i} \cong f^{\ast} D_{i}$ are isomorphisms.

For another  object $\xi' = (f' \colon S \to X, E'_{1}, \ldots, E'_{n}, \alpha'_{1}, \ldots, \alpha'_{n}) \in \mathfrak{X}(S')$, a morphism $\eta \colon \xi \to \xi' $ in $\mathfrak{X}$ is defined to be a pair $(\varphi, (\eta_{i})_{i=1}^{n})$, where $\varphi \colon S \to S'$ is a morphism of $X$-schemes, and $\eta_{i}$ are isomorphisms $E_{i} \cong \varphi^{\ast} E'_{i}$ such that the following diagrams commute:
$$
\xymatrix{
\ar[d]^-{\eta_{i}^{\otimes r_{i}}} r_{i} E_{i} \ar[r]^-{\alpha_{i}} & f^{\ast} D_{i} \ar[d]^{\cong}\\
r_{i} \varphi^{\ast} E_{i}' \ar[r]^-{\varphi^{\ast} \alpha'_{i}} & \varphi^{\ast} f'^{\ast} D_{i}
}
$$
We define a structure morphism $\pi \colon \X \to X$ by $\pi(\xi) = f$.
Clearly we have universal generalized effective Cartier divisors $\E_{1}, \ldots, \E_{n}$ on $\mathfrak{X}$ such that $r_{i} \mathcal{E}_{i} = \pi^{\ast} D_{i}$.

As in the proof of \cite[Theorem 10.3.10]{Ol}, we can regard $\X$ as the pull-back of $\mbi r$ by $\mbi \xi$
 \begin{eqnarray}\label{root}
\xymatrix{
\ar[d]_-\pi \mathfrak{X} \ar[r]^-{\mbi \xi_{\mathfrak{X}}} & [\mathbb{A}/ k^{\ast}]^{n} \ar[d]^-{\mbi r}\\
X \ar[r]^-{\mbi \xi} & [\mathbb{A}/ k^{\ast}]^{n},
}
\end{eqnarray}
\noindent 
where $\mbi r \colon [\mathbb{A}^1/ k^{\ast}]^{n} \to [\mathbb{A}^1/ k^{\ast}]^{n}$ is induced by $r_{i}$-th 
power $\mathbb{A}^1 \to \mathbb{A}^1$; $z \mapsto z^{r_i}$ for $i=1,\dots,n$. 
We also write 
$\mathfrak{X}=X[\sqrt[r_{1}]{D_{1}}, \ldots, \sqrt[r_{n}]{D_{n}}]$, and call $r_{1}, \ldots, r_{n}$ the \emph{weights}.
We can also regard $\mathcal{E}_i$ as the generalized effective Cartier divisor on $\mathfrak{X}$ defined by ${\mbi\xi}_\mathfrak{X}^*z_i$, 
where ${\mbi\xi}_\mathfrak{X}$ is in the diagram (\ref{root}), and $z_i$ is the coordinate of the $i$-th 
component in $[\mathbb{A}^1/k^*]^n$. 

Locally, we take an affine open subset $W=\Spec A$ of $X$ 
such that $D_i|_W=\{f_i=0\}$ for $f_i\in A$ and $i=1,\dots,n$. 
Since \eqref{root} is a Cartesian diagram, $\pi^{-1} W$ coincides with the root stack of $\Spec A$. 
Hence it is isomorphic to 
$[\Spec B/\mu_{r_1}\times\cdots\times\mu_{r_n}]$, where
\begin{eqnarray}
\label{local}
B=A[w_1,\dots,w_n]/\langle w_1^{r_1}-f_1,\dots,w_n^{r_n}-f_n\rangle,
\end{eqnarray}
and $\mu_{r_1}\times\cdots\times\mu_{r_n}$ acts on $\Spec B$ by 
$(w_1,\dots,w_n) \mapsto (\eta_1w_1,\dots,\eta_nw_n)$ for $(\eta_1,\dots,\eta_n) \in \mu_{r_1}\times\cdots\times\mu_{r_n}$ (cf. \cite[Theorem 10.3.10 (ii)]{Ol}). 

For later use, we summarize a few fundamental properties of root stacks $\pi \colon \mathfrak{X} \to X$ in 
the following:

\begin{lemma}\label{cadman}
Under the notation as above we have the following.
\begin{enumerate}
\item[$(1)$] $\pi \colon \mathfrak{X} \to X$ is an isomorphism away from $\E_{i}$ and $D_{i}$ $(i=1,\dots,n)$. 
\item[$(2)$] $\E_i$ is an generalized effective Cartier divisor on $\mathfrak{X}$ with $\pi^{\ast}D_i=r_i \E_i$ for $i=1, \ldots, n$.
\item[$(3)$] If $\pi' \colon \mathfrak{X}' \to X$ is a morphism such that there exist generalized effective Cartier divisors $\E'_i$ on $\mathfrak{X}'$ with 
$r_i \E'_i=(\pi')^*D_i$ for $i=1,\dots,n$, then there is a morphism $\varphi \colon \mathfrak{X}' \to \mathfrak{X}$ such that 
$\pi' \cong \pi\circ\varphi$ and $\E_{i}' = \varphi^{\ast} \E_{i}$.
This morphism $\varphi$ is unique up to isomorphisms.
\item[$(4)$] For a generalaized effective Cartier divisor $\sum_{i=1}^{n} l_{i} \E_{i}$ on $\mathfrak{X}$, one has 
$$\pi_*\mathcal{O}_\mathfrak{X}(\sum_{i=1}^{n} l_{i} \E_{i})=\mathcal{O}_{X}(\sum_{i=1}^{n}\lfloor \frac{l_{i}}{r_{i}} \rfloor D_{i}) \ \mathop{\mathrm{and}}\nolimits\  R^i\pi_*\mathcal{O}_\mathfrak{X}(\sum_{i=1}^{n} l_{i} \E_{i})=0 \mathop{\mathrm{ for }}\nolimits
 i>0.$$ 
\item[$(5)$] If the characteristic $p$ of the field $k$ does not divide any $r_{i}$, then $\mathfrak{X}$ is a Deligne-Mumford stack.
\end{enumerate}
\end{lemma}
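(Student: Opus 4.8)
\textbf{Proof proposal for Lemma \ref{cadman}.}

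The plan is to derive all five statements from the local description \eqref{local} together with the Cartesian diagram \eqref{root}, working \'etale-locally on $X$ and then patching. So first I would fix an affine open $W=\Spec A$ with $D_i|_W=\{f_i=0\}$, over which $\pi^{-1}W \cong [\Spec B/\mu_{r_1}\times\cdots\times\mu_{r_n}]$ with $B$ as in \eqref{local}; the universal divisor $\E_i$ restricts to the generalized effective Cartier divisor cut out by $w_i$, and $w_i^{r_i}=f_i$ gives $r_i\E_i=\pi^*D_i$ there, which is exactly (2); since (2) is a local statement (equality of line bundles with a map to $\mathcal O_{\mathfrak X}$), it globalizes immediately, and it was in fact already recorded right after the definition of the root stack. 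For (1), away from $\bigcup D_i$ each $f_i$ is a unit, so $B$ becomes \'etale over $A$ with each $w_i$ a root of unity times a fixed unit; the group action is then free and $[\Spec B/\prod\mu_{r_i}]\to\Spec A$ is an isomorphism. Equivalently one reads this off \eqref{root}: the $r_i$-th power map $[\mathbb A^1/k^*]\to[\mathbb A^1/k^*]$ is an isomorphism over the open substack $[\mathbb G_m/k^*]=\Spec k$, and $\mbi\xi$ sends $X\setminus\bigcup D_i$ into that substack.

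Statement (3) is the universal property, and the cleanest route is to deduce it from the Cartesian square \eqref{root} (which identifies $\mathfrak X$ as a fiber product) rather than to reprove it from scratch. Given $\pi'\colon\mathfrak X'\to X$ with divisors $\E_i'$ and isomorphisms $r_i\E_i'\cong(\pi')^*D_i$, the pair $(\E_i',$ iso$)$ is precisely the data of a lift of $\mbi\xi\circ\pi'\colon\mathfrak X'\to[\mathbb A^1/k^*]^n$ through $\mbi r$ — indeed an object of $[\mathbb A^1/k^*]$ over $\mathfrak X'$ together with an iso of its $r_i$-th power with the given one is what $\mbi\xi_{\mathfrak X'}$ amounts to. The universal property of the fiber product \eqref{root} then produces $\varphi\colon\mathfrak X'\to\mathfrak X$ with $\pi\circ\varphi\cong\pi'$ and $\varphi^*\E_i\cong\E_i'$, unique up to unique isomorphism. (One can alternatively just quote \cite[Theorem 10.3.10]{Ol}, of which this is part.)

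For (4) I would again reduce to $W=\Spec A$ and compute directly. The sheaf $\mathcal O_{\mathfrak X}(\sum l_i\E_i)$ corresponds, under the equivalence $\Coh[\Spec B/\prod\mu_{r_i}]\cong$ ($\prod\mu_{r_i}$-equivariant $B$-modules), to $B$ with the character twisted so that $w_i$ has weight $-l_i$; pushing forward to $X$ means taking $\prod\mu_{r_i}$-invariants, and $B=\bigoplus_{0\le a_i<r_i}A\cdot w_1^{a_1}\cdots w_n^{a_n}$ with $w_1^{a_1}\cdots w_n^{a_n}$ in weight $(-a_1,\dots,-a_n)$, while $f_i^{-1}=w_i^{-r_i}$ allows arbitrary negative exponents at the cost of a power of $f_i$. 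A short bookkeeping with the floor function shows the invariant part is $\bigoplus$ over the appropriate monomials of $f_i^{\lfloor l_i/r_i\rfloor}A$, i.e. $\mathcal O_X(\sum\lfloor l_i/r_i\rfloor D_i)$; and the higher direct images vanish because $\pi$ is affine (it is $[\Spec B/\prod\mu_{r_i}]\to\Spec A$ with $\prod\mu_{r_i}$ linearly reductive when $p\nmid r_i$ — but affineness alone, valid in all characteristics for this relative-Spec-plus-stack-quotient, already kills $R^i\pi_*$ of a quasi-coherent sheaf when the group is finite of order prime to $p$; if $p\mid$ some $r_i$ one argues instead that the relevant cohomology is that of $\mu_{r_i}$ acting on a free module, which is still concentrated in degree $0$ on the invariants). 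Finally (5): by \eqref{local} $\mathfrak X$ is \'etale-locally $[\Spec B/\prod\mu_{r_i}]$, and when $p\nmid r_i$ the group scheme $\mu_{r_i}$ is \'etale over $k$, so $\Spec B\to[\Spec B/\prod\mu_{r_i}]$ is an \'etale surjection from a scheme; patching these gives an \'etale atlas for $\mathfrak X$, hence $\mathfrak X$ is Deligne--Mumford.

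The main obstacle is the invariant-module computation in (4): one has to keep careful track of which monomials $w_1^{a_1}\cdots w_n^{a_n}$ survive under the twisted $\prod\mu_{r_i}$-action and match the resulting exponent of $f_i$ with $\lfloor l_i/r_i\rfloor$, and then check this local identification is independent of the choices of $W$ and the $f_i$ so that it glues to the asserted global isomorphism of sheaves on $X$. Everything else is either a direct consequence of the Cartesian square \eqref{root} or a standard fact about finite flat (here even \'etale, when $p\nmid r_i$) morphisms of stacks.
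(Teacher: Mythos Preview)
Your approach matches the paper's almost exactly: (1)--(3) are read off the definition and the Cartesian square \eqref{root}, (4) is the local invariant computation over $W=\Spec A$, and (5) comes from the local quotient presentation (the paper simply cites \cite[Theorem 10.3.10 (iv)]{Ol} rather than spelling out the \'etale atlas). One small correction in your (4): the coarse moduli map $\pi$ is \emph{not} an affine morphism of stacks (the fiber over $\Spec A$ is $[\Spec B/\prod\mu_{r_i}]$, not an affine scheme), and your case split on $p$ is unnecessary---$\mu_r$ is a diagonalizable group scheme and hence linearly reductive in \emph{every} characteristic, so taking $\prod_i\mu_{r_i}$-invariants is always exact, which is precisely the paper's one-line justification for $R^i\pi_*=0$.
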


\begin{proof} 
(1), (2) and (3) follow from the definition of the root stack $\mathfrak{X}$ and Cartier divisors $\E_i$ on $\mathfrak{X}$. 
As for (4), by the local description as above, coherent sheaves are considered as $\prod_{i=1}^{n}\mu_{r_i}$-equivariant coherent sheaves on $\Spec B$, where $B$ is in \eqref{local}, and the push-forward $\pi_{\ast}$ 
corresponds to taking $\prod_{i=1}^{n}\mu_{r_i}$-invariant parts. 
This is exact, hence $R^i\pi_{\ast}\F = 0$ for any coherent sheaf $\F$ on $\mathfrak{X}$ and $i>0$.

We write $l_i = r_i k_i + m_i$ for $0 \le m_i < r_i$, that is, $k_i = \lfloor \frac{l_i}{r_i} \rfloor$.
For a line bundle $\mathcal{O}_\mathfrak{X}(E)|_{\pi^{-1}W}$, the corresponding $B$-module is 
$$
B w_{1}^{-l_1} \cdots w_{n}^{-l_n} =  \bigoplus_{j_{1}=1}^{r_{1}-1} \cdots \bigoplus_{j_{n}=0}^{r_{n}-1} A\frac{w_{1}^{j_{1}} \cdots w_{n}^{j_{n}} }{f_{1}^{k_1}w_{1}^{n_1} \cdots f_{n}^{k_n}w_{n}^{m_n}}.
$$
Hence, as desired, the push-forward $\pi_*\mathcal{O}_\mathfrak{X}(\pi^*D)|_W$ is $\mathcal{O}_W(\sum_{i=1}^{n} \lfloor\frac{l_i}{r_i}\rfloor D_i)$ which correspond to the degree $0$ part $A\frac{1}{f_{1}^{k_{1}} \cdots f_{n}^{k_{n}}}$. 

(5) follows from \cite[Theorem 10.3.10 (iv)]{Ol}.
\end{proof}

\begin{example}
\label{sand}
We assume that the characteristic of $k$ is $p>0$, and consider a root stack $\pi \colon \X \to X$ whose weights are all equal to $p^{e}$.
Then $F^{e}=(F_{X})^{e} \colon X \to X$ induces a morphism $\varphi \colon X \to \X$ by Lemma \ref{cadman} (3) such that $(F_{X})^{e}=\pi \circ \varphi$.
Furthermore we also have $(F_{\X})^{e} \cong \varphi \circ \pi$, since for 
$$
\xi = (f \colon S \to X, E_{1}, \ldots, E_{n}, \alpha_{1}, \ldots, \alpha_{n}) \in \X(S), 
$$
we have $(F_{\X})^{e}(\xi)=(F^{e})^{\ast}\xi$ by definition, and $(F^{e})^{\ast} E_{i} \cong p^{e} E_{i} \stackrel{\alpha_{i}}{\cong} f^{\ast} D_{i}$ induces an isomorphism $(F_{\X})^{e}(\xi) \cong \varphi \circ \pi(\xi)$.
In this case, we say that $\X$ is a ``Frobenius sandwich'' of $X$; see Proposition \ref{propsand}.
\end{example}

\subsection{Sheaves of differentials on root stacks}
We continue the arguments and same notation as in the preceding subsection, but here we assume that the characteristic $p$ of $k$ does not divide any $r_{i}$.
Then by Lemma \ref{cadman} (5), the root stacks $\X$ are Delign-Mumford, and we have sheaves $\Omega_{\X/k}$ of differentials on $\X$. 

To describe $\Omega_{\X/k}$, we use another equivalent description of $\mathfrak{X}$ as follows. 
Let $L_{1}, \ldots, L_{n}$ be the total spaces of line 
bundles $\mathcal{O}_{X}(-D_{1}), \ldots, \mathcal{O}_{X}(-D_{n})$. We consider the complement $L_i^{\times}$ of the zero 
section. 
We consider $(k^{\ast})^{n}$-action on $U=L_1^{\times}\times_X\cdots\times_XL_n^{\times}\times\mathbb{A}^n$ by  $((t_i^{r_i}s_i),(t_ix_i))$ 
for $(t_{i}) \in (k^{\ast})^{n}$, and $(k^{\ast})^{n}$-equivariant vector bundle $E = U \times \prod_{i=1}^{n} k t_{i}^{r_{i}}$ over $U$.
We define an equivariant section $s \colon U \to E$ defined by 
$$
((s_{i}), (x_{i})) \mapsto (\xi_{i}(s_{i} ) - x_{i}^{r_{i}}) 
$$  
and a closed subscheme $V=s^{-1}(0)$ of $U$, where $\xi_{i} \colon \mathcal{O}_{X}(-D_{i}) \to \mathcal{O}_{X}$ is the section 
associated to the generalized effective Cartier divisor $D_{i}$ on $X$.
We put $\mathfrak{X}'=[V/(k^{\ast})^n]$, and consider the natural projection $\pi' \colon \mathfrak{X}' \to X$.

We consider Cartier divisors $\E_{i}'=\lbrace x_{i}=0 \rbrace$ on $\mathfrak{X}'$.
If we write $\mathcal{L}_{i}=\mathcal{O}_{\mathfrak{X}'}(\E_{i}')$, then we have an isomorphism $\mathcal{L}_{i}^{\otimes r_{i}} \cong (\pi')^{\ast} \mathcal{O}_{X}(D_{i})$ from the construction of $\mathfrak{X}'$.
This gives an homomorphism $\mathfrak{X}' \to \mathfrak{X}$ by Lemma \ref{cadman} (3).
By the above local description of $\mathfrak{X}$, this gives an isomorphism $\mathfrak{X}' \cong \mathfrak{X}$.
Hence we can identify coherent sheaves on $\X$ with $(k^{\ast})^{n}$-equivariant coherent sheaves on $V$.

We consider the following complex of $(k^{\ast})^{n}$-equivariant vector bundles on $V$:
\begin{eqnarray}
\label{cpx}
0 \to E^{\vee}|_{V} \to \Omega_{U/k}|_{V} \to \mathcal{O}_{V}^{ \oplus n} \to 0,
\end{eqnarray}
where we identify the fiber of the trivial bundle $\mathcal{O}_{V}^{\oplus n}$ as the cotangent space of $(k^{\ast})^{n}$ at the unit. 
For any \'etale morphism $S \to \X'$, we have a cartesian diagram
$$
\xymatrix{ P \ar[r] \ar[d]& V \ar[d] \\
S \ar[r] & \X',}
$$ 
where the vertical arrow $P \to S$ is a principal $(k^{\ast})^{n}$-bundle over $S$, and the horizontal arrow $P \to V$ is a $(k^{\ast})^{n}$-equivariant morphism.
We see that the pull-back of $\Omega_{S/k}$ to $P$ is naturally isomorphic to the middle cohomology of the pull-back of \eqref{cpx} as $(k^{\ast})^{n}$-equivariant coherent sheaves.
Hence the middle cohomology of the complex \eqref{cpx} is isomorphic to $\Omega_{\X/k}$.
Since $E^{\vee}|_V=\bigoplus_{i=1}^n \mathcal{O}_V t_i^{-r_i}$ and 
$\Omega_{U/k}=\bigoplus_{i=1}^n (\mathcal{O}_U\oplus \mathcal{O}_Ut_i^{-1}) \oplus \Omega_{X/k}|_U$, we have 
\begin{eqnarray}
\label{can}
\omega_{\mathfrak{X}} = \pi^{\ast} \det \Omega_{X/k} \otimes \mathcal{O}_{\mathfrak{X}}(\sum_{i=1}^{n} (r_{i} -1) \E_{i}).
\end{eqnarray}

When $D=\bigcup_{i=1}^{n} D_{i}$ is a divisor with normal crossings and $p$ does not divide any $r_{i}$ for $i=1, \ldots, n$, we also have another description of root stacks.
Under this assumption, we consider a Delgne-Mumford stack $\tilde{\pi} \colon \widetilde{\mathfrak{X}} \to X$ constructed in \cite[Section 4]{MO} together with a normal crossings divisor $\widetilde{\E}=\bigcup_{i=1}^{n} \widetilde{\E}_{i}$ such that $\tilde{\pi}^{\ast} \mathcal{O}_{X} (D_{i}) \cong \mathcal{O}_{\widetilde{\mathfrak{X}}}(r_{i} \widetilde{\E}_{i})$.
Hence by Lemma \ref{cadman} (3), we have a morphism $\widetilde{\mathfrak{X}} \to \mathfrak{X}$.
From the local description \cite[Lemma 4.3]{MO}, we see that this is an isomorphism.


\section{FFRT property of $R(C,D)$ via orbifold curves}


\subsection{Orbifold curves}
By an ``orbifold curve," we mean a one-dimensional smooth separated Deligne-Mumford stack $\mathfrak C$ 
whose coarse moduli map $\pi \colon \mathfrak{C} \to C$ to a smooth projective curve $C$ is generically isomorphism. 
As in \cite[1.3.6]{B}, an orbifold curve is a root stack over 
$C$. 

We fix the notation to be used throughout this section. 
Given integers $r_1,\dots,r_n \ge 2$ and closed points $P_1,\dots,P_n$ on a smooth projective curve 
$C$, let $\mathfrak{C}=C[\sqrt[r_1]{P_1},\dots,\sqrt[r_n]{P_n}\,]$ be the root stack of weight $(r_1,\dots,r_n)$ and let $\pi \colon \mathfrak{C} \to C$ be the coarse moduli map. \textrm{F}or $i=1,\dots,n$, we denote by 
$Q_i$ the stacky point over $P_i$, that is, the integral Cartier divisor on $\mathfrak{C}$ with $\pi^*P_i=r_iQ_i$. 
Since $\dim \mathfrak{C}=1$, we have $\omega_{\mathfrak{C}}=\Omega_{\mathfrak{C}}$ when $p$ does not divide any weight $r_{i}$, and the next lemma follows from \eqref{can}. 
We also use the notation $K_{\mathfrak{C}}$ for the canonical divisor. 

\begin{lemma}
\label{dualizing}
We assume that $p$ does not divide any weight $r_{i}$. 
Then we have 
$$
\omega_\mathfrak{C}=\Omega_{\mathfrak{C}} \cong \pi^* \omega_C \otimes \mathcal{O}_\mathfrak{C}(\sum_{i=1}^n (r_i-1)Q_i). 
$$
\end{lemma}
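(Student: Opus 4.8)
The plan is to obtain the lemma as a direct specialization of the canonical-sheaf formula \eqref{can}, which was established for an arbitrary root stack $\pi\colon\mathfrak{X}\to X$ along generalized effective Cartier divisors $D_1,\dots,D_n$ with $p\nmid r_i$. So essentially no new work is needed beyond unwinding the definitions in the one-dimensional case.

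First I would check that the set-up of \eqref{can} applies here. The closed points $P_1,\dots,P_n$ on the smooth curve $C$ are effective Cartier divisors, so $\mathfrak{C}=C[\sqrt[r_1]{P_1},\dots,\sqrt[r_n]{P_n}\,]$ is literally the root stack of $C$ along $(P_1,\dots,P_n)$ with weights $(r_1,\dots,r_n)$; since $p\nmid r_i$ for all $i$, Lemma~\ref{cadman}~(5) tells us $\mathfrak{C}$ is a smooth Deligne--Mumford stack, so $\Omega_{\mathfrak{C}/k}$ is a well-defined locally free sheaf on the \'etale site of $\mathfrak{C}$ and $\omega_{\mathfrak{C}}=\det\Omega_{\mathfrak{C}/k}$ is defined. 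By Lemma~\ref{cadman}~(2), the universal generalized effective Cartier divisor $\mathcal{E}_i$ on $\mathfrak{C}$ satisfies $\pi^*P_i=r_i\mathcal{E}_i$, and by the uniqueness statement in Lemma~\ref{cadman}~(3) it agrees with the stacky point $Q_i$ defined above; thus $\mathcal{E}_i=Q_i$.

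Then I would substitute $X=C$, $\mathfrak{X}=\mathfrak{C}$, and $\mathcal{E}_i=Q_i$ into \eqref{can}, giving
$$
\omega_{\mathfrak{C}}=\pi^*\det\Omega_{C/k}\otimes\mathcal{O}_{\mathfrak{C}}\Bigl(\sum_{i=1}^n(r_i-1)Q_i\Bigr).
$$
Since $C$ is a smooth curve, $\Omega_{C/k}$ is a line bundle, so $\det\Omega_{C/k}=\Omega_{C/k}=\omega_C$; and since $\dim\mathfrak{C}=1$, $\Omega_{\mathfrak{C}/k}$ is a line bundle as well, so $\omega_{\mathfrak{C}}=\det\Omega_{\mathfrak{C}/k}=\Omega_{\mathfrak{C}/k}=\Omega_{\mathfrak{C}}$. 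Feeding these two identifications into the displayed equality yields exactly the claimed isomorphism $\omega_{\mathfrak{C}}=\Omega_{\mathfrak{C}}\cong\pi^*\omega_C\otimes\mathcal{O}_{\mathfrak{C}}(\sum_{i=1}^n(r_i-1)Q_i)$.

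I do not expect any real obstacle. The only items that deserve a sentence of care are the bookkeeping identification $\mathcal{E}_i=Q_i$ and the remark that the differentials are being formed on the \'etale site of the Deligne--Mumford stack $\mathfrak{C}$, so that taking $\det$ of the rank-one sheaf $\Omega_{\mathfrak{C}/k}$ is legitimate. If one wished to avoid citing \eqref{can} altogether, one could instead work locally with the presentation $\pi^{-1}W\cong[\Spec B/\mu_{r_1}\times\cdots\times\mu_{r_n}]$ from \eqref{local} and compute $\Omega_{\mathfrak{C}/k}$ from $\Omega_{B/A}$ and the ramification of $w_i\mapsto w_i^{r_i}$; but this merely reproves \eqref{can} in the curve case and is unnecessary.
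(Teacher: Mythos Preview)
Your proposal is correct and follows essentially the same approach as the paper: the paper simply remarks that since $\dim\mathfrak{C}=1$ one has $\omega_{\mathfrak{C}}=\Omega_{\mathfrak{C}}$, and then cites \eqref{can}. Your write-up just makes explicit the identifications $\mathcal{E}_i=Q_i$ and $\det\Omega_{C/k}=\omega_C$ that the paper leaves implicit.
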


We consider the Chow ring $A(\mathfrak{C})$ with $\mathbb{Q}$-coefficients, and the map $\deg \colon A(\mathfrak{C}) \to A(\Spec k) \cong \Q$ induced by the push-forward by the structure morphism $\mathfrak{C} \to \Spec k$ (cf. \cite[Definition 1.15]{V}).
We have $\deg \omega_{\mathfrak{C}}=n + 2g - 2 - \sum_{i=1}^{n} \frac{1}{r_{i}}$ by Lemma \ref{dualizing}, where $g$ is the genus of $C$.

Now let $R=R(C,D)$ for an ample $\Q$-Cartier divisor $D=\sum_{i=1}^n (s_i/r_i)P_i$ on $C$ as in subsection 
2.1. Then Lemma \ref{cadman} (4) allows us to think of $R=R(C,D)$ as the section ring associated with an \emph{integral Cartier divisor} $\pi^*D$ or equivalently a line bundle $L=\mathcal{O}_\mathfrak{C}(\pi^*D)$ on $\mathfrak{C}$: 
$$R = R(C,D) \cong R(\mathfrak{C},L) = \bigoplus_{m \ge 0} H^0(\mathfrak{C},L^m)t^m.$$

We will extend the fundamental diagram (\ref{fund1}) to the stacky situation. 
$$\widetilde{Y}=\mathrm{Spec}_\mathfrak{C}\left(\bigoplus_{m \ge 0} L^m t^m\right) \  
\mathop{\mathrm{and}}\nolimits\ 
   \widetilde{U}=\mathrm{Spec}_\mathfrak{C}\left(\bigoplus_{m \in\Z} L^m t^m \right).$$
Then $\widetilde{Y}$ is an $\mathbb{A}^1$-bundle over $\mathfrak{C}$ and we have the following 
extended fundamental diagram. 
$$
\xymatrix{
\widetilde{U} \ar@{^(->}[d] \ar[r] & U \ar@{^(->}[d]  \ar[r]^-\cong & Z\setminus V(R_+) \ar@{^(->}[d]\\
\widetilde{Y} \ar[d]_{\widetilde{\sigma}} \ar[r]^{\psi} & Y \ar[d]^{\sigma} \ar[r]^-\varphi & Z=\Spec R \\
\mathfrak{C} \ar[r]^{\pi} & C & 
}
$$
Here $\psi \colon \widetilde{Y} \to Y$ and $\widetilde{U} \to U$ are induced by isomorphisms 
$\pi_{\ast} L^{m} \cong \mathcal{O}_{C}(\lfloor m D \rfloor )$ for $m \in \Z$ (cf.\ \cite[Theorem 10.2.4]{Ol}).

In the local description \eqref{local}, we see that $(\pi\circ\widetilde{\sigma})^{-1} W$ is a quotient stack 
$[V/\prod_{i=1}^{n} \mu_{r_i}]$ of a $\prod_{i=1}^{n} \mu_{r_i}$-equivariant line bundle $V$ over $\Spec B$, and 
$$
\sigma^{-1}(W)=\Spec \Gamma(V, \mathcal{O}_V)^{\prod_{i=1}^{n} \mu_{r_i}}
$$ 
by Lemma \ref{cadman} (4).  
Furthermore $\psi |_{(\pi\circ\widetilde{\sigma})^{-1}W}$ 
is given by the inclusion $\Gamma(V, \mathcal{O}_V)^{\prod_{i=1}^{n} \mu_{r_i}} \to \Gamma(V, \mathcal{O}_V)$.
Hence $\psi$ is a coarse moduli map (cf.\ \cite[Chapter 6]{Ol}). 

\begin{lemma}
\label{grading}
In the situation as above we have the following. 
\begin{enumerate}
\item[$(1)$] $\psi$ induces an isomorphism $\widetilde{U} \cong U$ preserving the $\Z$-grading. 
\item[$(2)$] We have an isomorphism
$$
\mathfrak{C} \cong [\widetilde{U}/k^{\ast}],
$$
where $k^{\ast}=\Spec k[t, t^{-1}]$ is the multiplicative group, and $k^{\ast}$-action on 
$\widetilde{U}$ is induced by the fiberwise multiplication on the line bundle $\widetilde{Y}$ 
over $\mathfrak{C}$.
\end{enumerate}
Consequently, we have an equivalence between the category of vector bundles on $\mathfrak{C}$ and 
the category of reflexive $\Z$-graded $R$-modules given by 
$$\E \mapsto \Gamma_*(\E)=\bigoplus_{m \in\Z} H^0(\mathfrak{C},\E\otimes L^m)t^m.$$ 
\end{lemma}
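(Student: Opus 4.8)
The plan is to read off $(1)$ and $(2)$ from the explicit local chart $\pi^{-1}W\cong[\Spec B/\prod_i\mu_{r_i}]$ of \eqref{local} together with the observation, recorded just before the lemma, that $\psi\colon\widetilde Y\to Y$ is a coarse moduli map, and then to obtain the final equivalence by $k^*$-descent along $\widetilde U\to\mathfrak C$ and the normality of $R$. For $(1)$: the substack $\widetilde U\subseteq\widetilde Y$ is the complement of the zero section of the $\mathbb{A}^1$-bundle $\widetilde\sigma\colon\widetilde Y\to\mathfrak C$, equivalently the locus where $t$ is invertible, and likewise $U=Y\setminus\mathrm{Ex}(\varphi)$ inside $Y$. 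Since $\psi$ is built from the identifications $\pi_*L^m\cong\mathcal O_C(\lfloor mD\rfloor)$, which by Lemma \ref{cadman} $(4)$ assemble into $\pi_*\bigl(\bigoplus_{m\in\Z}L^mt^m\bigr)=\bigoplus_{m\in\Z}\mathcal O_C(\lfloor mD\rfloor)t^m$, one has $\psi^{-1}(U)=\widetilde U$ and $\psi|_{\widetilde U}$ is the grading-preserving morphism $\widetilde U\to U$ induced by this identification of $\Z$-graded sheaves of algebras. The crucial point is that $\widetilde U$ is \emph{representable}: choosing the chart \eqref{local} so that $W$ meets only $P_i$, the bundle $\widetilde Y|_{\pi^{-1}W}$ is the total space of a $\mu_{r_i}$-equivariant line bundle whose fibre over $Q_i$ carries a $\mu_{r_i}$-action through a primitive character, because $\gcd(s_i,r_i)=1$; hence $\mu_{r_i}$ acts freely on the zero-section complement $\widetilde U|_{\pi^{-1}W}$, which is therefore a scheme, affine over $W$. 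Thus $\widetilde U$ is a scheme. Since restricting a coarse moduli map to the preimage of an open subscheme of its target is again a coarse moduli map, and since the coarse moduli map of a scheme is an isomorphism, we conclude $\psi|_{\widetilde U}\colon\widetilde U\xrightarrow{\sim}U$, compatibly with the $\Z$-grading.

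For $(2)$: since $\bigoplus_{m\ge0}L^mt^m=\mathrm{Sym}_{\mathcal O_\mathfrak C}(Lt)$, the morphism $\widetilde\sigma\colon\widetilde Y\to\mathfrak C$ is the total space of a line bundle on $\mathfrak C$, so $\widetilde U\subseteq\widetilde Y$ is a $k^*$-torsor over $\mathfrak C$ for the fibrewise multiplication, where $k^*=\Spec k[t,t^{-1}]$. For any $G$-torsor $P\to\mathcal X$ the projection induces an isomorphism $[P/G]\xrightarrow{\sim}\mathcal X$; applied to $P=\widetilde U$ and $G=k^*$ this gives $\mathfrak C\cong[\widetilde U/k^*]$.

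For the equivalence: combining $(1)$, $(2)$ and \eqref{fund1}, $\mathfrak C\cong[U/k^*]$ where $U\cong Z\setminus V(R_+)$, $Z=\Spec R$ is normal, $V(R_+)$ has codimension $\ge 2$ in $Z$, and $U$ is smooth of dimension $2$, being a $k^*$-torsor over the smooth stack $\mathfrak C$. A vector bundle $\E$ on $\mathfrak C$ pulls back to a $k^*$-equivariant vector bundle $\widetilde{\mathcal E}$ on $U$; under the identification $\mathfrak C=[\widetilde U/k^*]$ the line bundle $L$ pulls back to a trivial bundle on $\widetilde U$ with nontrivial $k^*$-weight, so tensoring by $L^m$ amounts to shifting the grading by $m$, whence $\bigoplus_m H^0(\mathfrak C,\E\otimes L^m)t^m=\Gamma(U,\widetilde{\mathcal E})=\Gamma(Z,j_*\widetilde{\mathcal E})$ for the $k^*$-equivariant open immersion $j\colon U\hookrightarrow Z$. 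Because $R$ is normal and $V(R_+)$ has codimension $\ge 2$, the sheaf $j_*\widetilde{\mathcal E}$ is coherent and reflexive, so $\Gamma_*(\E)$ is a reflexive $\Z$-graded $R$-module. Conversely a reflexive $\Z$-graded $R$-module $M$ restricts on $U$ to a $k^*$-equivariant reflexive sheaf, which is locally free since $U$ is smooth of dimension $2$, and hence descends to a vector bundle on $\mathfrak C$; since $M=\Gamma(Z,j_*(\widetilde M|_U))$ by reflexivity and codimension, the two constructions are mutually inverse (and manifestly functorial), giving the stated equivalence.

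The one genuinely non-formal step is the freeness assertion in $(1)$ — that the residual $\mu_{r_i}$-action on $\widetilde U$ is free, which is precisely where the coprimality $\gcd(s_i,r_i)=1$ of the Pinkham--Demazure data enters and which upgrades the coarse moduli map $\psi|_{\widetilde U}$ to an isomorphism. The remaining work (propagating the $\Z$-grading through $\psi$, and the reflexive-extension half of the equivalence) is routine given the normality of $R$ and the codimension of $V(R_+)$.
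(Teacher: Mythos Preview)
Your proof is correct and follows essentially the same line as the paper's: both use the coprimality $\gcd(s_i,r_i)=1$ to show that the automorphism groups of closed points of $\widetilde U$ are trivial (you phrase this as freeness of the residual $\mu_{r_i}$-action on the zero-section complement), hence $\widetilde U$ is an algebraic space and the coarse moduli map $\psi|_{\widetilde U}$ is an isomorphism; both then treat (2) as immediate from the $k^*$-torsor structure, and derive the equivalence by combining $k^*$-descent with the reflexive-extension argument across $V(R_+)$. Your version is more explicit about the local chart computation and about why the grading is preserved, but the architecture is the same.
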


\begin{proof}
(1) Since $r_i$ and $s_i$ are co-prime to each other, the automorphism functors of all closed points of $\widetilde{U}$ are trivial. 
Hence by \cite[Theorem 2.2.5]{C}, $\widetilde{U}$ is an algebraic space, and the coarse 
moduli map $\psi|_{\widetilde{U}} \colon \widetilde{U} \to U$ is an isomorphism.
(2) is obvious.

It follows from (2) that there is a one-to-one correspondence between vector bundles on $\mathfrak{C}$ 
and $\Z$-graded vector bundles on $\widetilde{U}$ given by $\E \mapsto \bigoplus_{m \in\Z}\E\otimes L^m$. 
On the other hand, since $\widetilde{U} \cong Z\setminus V(R_+)$ by (1) and codim$(V(R_+),Z)=2$, 
we have a one-to-one correspondence between $\Z$-graded vector bundles on $\widetilde{U}$ 
and reflexive graded $R$-modules, from which the required correspondence follows. 
\end{proof}

\begin{definition}
For a line bundle $L$ on $\mathfrak{C}$, we define an equivalence $\sim_L$ in $\Coh(\mathfrak{C})$ as follows. 
For $\E,\,\F \in$ $\Coh(\mathfrak{C})$, we write $\E \sim_L \F$ if $\E \cong \F\otimes L^m$ 
for some $m \in \Z$.
\end{definition}

\begin{corollary}\label{1-1cor}
For a vector bundle $\E$ on $\mathfrak{C}$, 
$$\E \mapsto \Gamma_*(\E)=\bigoplus_{m \in\Z} H^0(\mathfrak{C},\E\otimes L^m)t^m$$ 
gives a one-to-one correspondence between the set of equivalence classes of vector bundles on $\mathfrak{C}$ with respect to $\sim_L$ and the set of equivalence classes of reflexive $\Z$-graded 
$R$-modules with respect to the equivalence $\sim$ admitting degree shift as in subsection 2.2.
\end{corollary}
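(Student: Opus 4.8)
The plan is to obtain this corollary as a formal consequence of the equivalence of categories in Lemma~\ref{grading}, once one checks that twisting by $L$ on the vector-bundle side corresponds to shifting the grading on the module side.

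The first step is to record the compatibility
\[
\Gamma_*(\E \otimes L^m) \;\cong\; \Gamma_*(\E)(m), \qquad m \in \Z,
\]
with the degree-shift convention fixed in subsection~2.2. This is immediate from the definition of $\Gamma_*$: the degree-$j$ graded piece of the left-hand side is $H^0(\mathfrak{C}, \E\otimes L^{m}\otimes L^{j})t^{j} = H^0(\mathfrak{C}, \E\otimes L^{m+j})t^{j}$, and reindexing identifies this, as a graded $R$-module, with the $m$-shift of $\Gamma_*(\E)$. In particular the rule $\E \mapsto \Gamma_*(\E)$ is compatible with the two equivalence relations: if $\E \sim_L \F$, say $\E \cong \F\otimes L^m$, then $\Gamma_*(\E) \cong \Gamma_*(\F)(m)$, so $\Gamma_*(\E) \sim \Gamma_*(\F)$. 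Hence the assignment descends to a well-defined map from equivalence classes of vector bundles to equivalence classes of reflexive $\Z$-graded $R$-modules.

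Next I would check that this map is surjective and injective. For surjectivity, given a reflexive $\Z$-graded $R$-module $M$, Lemma~\ref{grading} provides a vector bundle $\E$ on $\mathfrak{C}$ with $\Gamma_*(\E) \cong M$, so the class of $\E$ maps to that of $M$. For injectivity, suppose $\Gamma_*(\E) \sim \Gamma_*(\F)$, i.e.\ $\Gamma_*(\E) \cong \Gamma_*(\F)(m)$ for some $m \in \Z$; by the compatibility above $\Gamma_*(\F)(m) \cong \Gamma_*(\F\otimes L^{m})$, and since $\Gamma_*$ is fully faithful (being an equivalence by Lemma~\ref{grading}) this graded isomorphism lifts to an isomorphism $\E \cong \F\otimes L^{m}$ of vector bundles, whence $\E \sim_L \F$.

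I do not expect a genuine obstacle here: the corollary is essentially a bookkeeping statement, and the only points needing a little care are the sign/normalization in the displayed compatibility and the fact that the functor of Lemma~\ref{grading} is an honest equivalence (so that it is injective on isomorphism classes and its essential image is exactly the reflexive graded modules) — both of which are already in place. If one prefers a one-line phrasing: $\Gamma_*$ is an equivalence intertwining the autoequivalences ``$\otimes L$'' and ``$(1)$'', hence induces a bijection between the orbit sets of these autoequivalences on objects, which are precisely the two sets of equivalence classes appearing in the statement.
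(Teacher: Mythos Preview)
Your proof is correct and is exactly the argument the paper has in mind: in the paper the corollary is stated with no proof, as an immediate consequence of the equivalence in Lemma~\ref{grading}, and what you have written is simply the routine unpacking of that implication. The only cosmetic point is the sign of the shift in $\Gamma_*(\E\otimes L^m)\cong\Gamma_*(\E)(\pm m)$, which you yourself flag and which is irrelevant since $\sim$ allows arbitrary degree shifts.
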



\subsection{FFRT property of $R(C,D)$ via orbifold curves}
Let us now work over a field $k$ of characteristic $p>0$. 
Then we have the Frobenius morphism $F \colon \mathfrak{C} \to \mathfrak{C}$ as in Section \ref{frob}.

Recall that $R=R(C,D)$ is the section ring $R=R(\mathfrak{C},L)=\bigoplus_{\ell \ge0} H^0(\mathfrak{C},L^\ell)t^\ell$ associated with the 
line bundle $L=\mathcal{O}_\mathfrak{C}(\pi^*D)$. 
Then for $0\le i<q=p^e$, the $R$-summand $(R^{1/q})_{i/q  \mod\Z}$ of $R^{1/q}$, which is described as 
$$
(R^{1/q})_{i/q   \mod\Z} 
                    = \bigoplus_{0\le \ell \equiv i   \mod q} H^0(\mathfrak{C},F^e_*(L^\ell))t^{\ell/q} \\
                    \cong \bigoplus_{m\ge 0} H^0(\mathfrak{C},F^e_*(L^i)\otimes L^m)t^{i/q+m},
$$
is equivalent to the $\Z$-graded $R$-module $\Gamma_*(F^e_*(L^i))$ with respect to $\sim$. 
In view of Corollary \ref{1-1cor}, it is important for 
our purpose to know the decomposition of the Frobenius push-forwards $F^e_*(L^i)$ with 
$0\le i\le q-1$ on the orbifold curve $\mathfrak{C}$. 

Given any line bundle $L$ on $\mathfrak{C}$ and integers $e,\,i\ge 0$, we have a unique decomposition 
$$F^e_*(L^i) = \F_1^{(e,i)}\oplus\cdots\oplus\F_{m_{e,i}}^{(e,i)}$$
in $\Coh(\mathfrak{C})$ with $\F_j^{(e,i)}$ indecomposable. 

\begin{definition}
Let $L$ be a line bundle on $\mathfrak{C}$. We say that the pair $(\mathfrak{C},L)$ has \emph{globally finite $F$-representation 
type} $(GFFRT$ for short$)$, if the set of isomorphism classes
$$\{\F_j^{(e,i)} \,|\, e=0,1,2,\dots;\, i=0, 1,\dots,p^e-1;\, j=1,\dots,m_{e,i}\}/_{\displaystyle\cong}$$ 
is finite. 
We say that $\mathfrak{C}$ has GFFRT if the pair $(\mathfrak{C},\mathcal{O}_\mathfrak{C})$ does. 
\end{definition} 

\begin{corollary}\label{FFRT-GFFRT}
Let $R=R(C,D)=R(\mathfrak{C},L)$, where $L=\mathcal{O}_\mathfrak{C}(\pi^*D)$ as above. Then $R$ has FFRT if and only if $(\mathfrak{C},L)$ has 
GFFRT.
\end{corollary}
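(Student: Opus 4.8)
The plan is to reduce the FFRT property of $R$ to a statement about the Frobenius push-forwards $F^e_*(L^i)$ on $\mathfrak{C}$, and then match the two finiteness conditions termwise. First I would recall the decomposition of $R^{1/q}$ into the $R$-summands $(R^{1/q})_{i/q \bmod \Z}$ for $0 \le i < q = p^e$, which was established in subsection 2.2: the set of equivalence classes (with respect to $\sim$) of indecomposable summands of $R^{1/p^e}$ over all $e$ is finite if and only if the same holds for the collection of all $(R^{1/q})_{i/q \bmod \Z}$, since $R^{1/q}$ is the finite direct sum of these pieces and the Krull--Schmidt decomposition of a direct sum is the union of the decompositions of the pieces. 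Thus $R$ has FFRT iff the set of $\sim$-equivalence classes of indecomposable summands of the $\Z$-graded modules $(R^{1/q})_{i/q \bmod \Z}$, over all $e \ge 0$ and $0 \le i < p^e$, is finite.

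Next I would invoke the identification, already displayed just before the corollary, that $(R^{1/q})_{i/q \bmod \Z}$ is equivalent with respect to $\sim$ to $\Gamma_*(F^e_*(L^i))$. Since $\Gamma_*$ is additive, an indecomposable direct summand of $F^e_*(L^i)$ maps to an indecomposable summand of $\Gamma_*(F^e_*(L^i))$; more precisely, by Lemma \ref{grading} (or Corollary \ref{1-1cor}), $\Gamma_*$ is an equivalence between vector bundles on $\mathfrak{C}$ and reflexive $\Z$-graded $R$-modules, hence it takes the indecomposable decomposition $F^e_*(L^i) = \bigoplus_j \F_j^{(e,i)}$ to the indecomposable decomposition of the corresponding reflexive module. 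Here I should note that $F^e_*(L^i)$ is indeed locally free on $\mathfrak{C}$: the Frobenius $F \colon \mathfrak{C} \to \mathfrak{C}$ is finite and flat on the smooth Deligne--Mumford stack $\mathfrak{C}$ (as recalled in subsection \ref{frob}), so the push-forward of a vector bundle is a vector bundle.

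Then I would match equivalence relations: Corollary \ref{1-1cor} tells us precisely that $\E \mapsto \Gamma_*(\E)$ induces a \emph{bijection} between $\sim_L$-equivalence classes of vector bundles on $\mathfrak{C}$ and $\sim$-equivalence classes of reflexive $\Z$-graded $R$-modules. Under this bijection, two summands $\F_j^{(e,i)}$ and $\F_{j'}^{(e',i')}$ are $\sim_L$-equivalent on $\mathfrak{C}$ precisely when the corresponding graded $R$-modules are $\sim$-equivalent. Therefore the set $\{\F_j^{(e,i)}\}/_{\cong}$ being finite up to the twist equivalence $\sim_L$ is equivalent to the set of $\sim$-classes of indecomposable summands of the $(R^{1/q})_{i/q\bmod\Z}$ being finite, which by the first step is equivalent to $R$ having FFRT. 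Since GFFRT for $(\mathfrak{C},L)$ is \emph{defined} as finiteness of $\{\F_j^{(e,i)}\}/_{\cong}$ (genuine isomorphism, not up to twist), the only remaining point is to check that finiteness up to $\cong$ and finiteness up to $\sim_L$ coincide here; but the difference is only by the countably many twists $L^m$, $m \in \Z$, and in fact one can restrict $i$ to the range $0 \le i < p^e$ so that the classes already form a bounded family — more carefully, if the $\sim_L$-classes are finite in number, each class contains only line-bundle-twists $\E \otimes L^m$ of a fixed $\E$, but among the actual summands appearing, a rank/degree bound forces only finitely many twists to occur, giving finiteness up to $\cong$ as well.

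The step I expect to be the genuine content, rather than bookkeeping, is the last one: reconciling the ``up to twist by $L$'' equivalence $\sim_L$ appearing naturally on the stack side with the bare isomorphism used in the definition of GFFRT. The resolution is that restricting to $0 \le i \le p^e - 1$ keeps the line bundles $L^i$ in a controlled range, so that the summands $\F_j^{(e,i)}$ of $F^e_*(L^i)$ have bounded slope (indeed $\mu(\F_j^{(e,i)}) = \mu(L^i)/p^e + $ a universal constant, lying in a fixed interval), whence only finitely many twists $L^m$ can occur among summands in a single $\sim_L$-class; this makes the two finiteness conditions literally equivalent. All other steps — additivity of $\Gamma_*$, compatibility of Krull--Schmidt decompositions with finite direct sums, local freeness of Frobenius push-forwards on $\mathfrak{C}$ — are routine given the machinery assembled in Sections 2 and 3.
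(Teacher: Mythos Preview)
Your overall architecture is correct and matches the paper's: reduce FFRT to finiteness of $\sim_L$-classes of summands via Corollary~\ref{1-1cor}, then argue that $\sim_L$-finiteness and $\cong$-finiteness agree. You also correctly identify this last passage as the only substantive step.

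The gap is in your justification of that step. You assert that each summand $\F_j^{(e,i)}$ has slope ``$\mu(L^i)/p^e+$ a universal constant''. That is the slope of the \emph{whole} bundle $F^e_*(L^i)$, not of an individual summand; direct summands of a non-semistable bundle need not share its slope, and at this point in the paper nothing is known about semistability of $F^e_*(L^i)$ (indeed for $\delta_{\mathfrak C}<0$ it is typically far from semistable). So the formula as written is unjustified, and your argument does not close.

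The paper fills this gap by a cohomological bound rather than a slope computation. One checks directly that
\[
H^0\bigl(\mathfrak{C},\,F^e_*(L^i)\otimes L^{-1}\bigr)\;\cong\;H^0\bigl(\mathfrak{C},L^{i-p^e}\bigr)=0
\]
for all $e\ge0$ and $0\le i\le p^e-1$; hence any summand $\F\otimes L^m$ of $F^e_*(L^i)$ satisfies $H^0(\F\otimes L^{m-1})=0$. For a fixed $\F$, ampleness of $L$ forces $H^0(\F\otimes L^{m})\ne0$ once $m$ is large, so $m$ is bounded above; a dual $H^1$-argument bounds $m$ below. This is exactly Claim~\ref{FFRT-GFFRT}.1 in the paper. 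If you prefer to phrase it as a slope bound, note that $H^0(\E\otimes L^{-1})=0$ together with Riemann--Roch on $\mathfrak C$ forces $\mu(\E)\le\deg L+\tfrac{\delta_{\mathfrak C}}{2}$ for every summand $\E$, which is the uniform bound you wanted; but the input is still the cohomology vanishing, not the naive slope formula you wrote.
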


\begin{proof}
By Corollary \ref{1-1cor}, $R$ has FFRT if and only if the set of equivalence classes 
$$\{\F_j^{(e,i)} \,|\, e=0,1,2,\dots;\, i=0,1,\dots,p^e-1;\, j=1,\dots,m_{e,i}\}/ \sim_L$$ 
is finite. Hence the sufficiency follows immediately. 
For the necessity, it is sufficient to prove the 
following claim. 

\medskip\noindent\textbf{Claim \ref{FFRT-GFFRT}.1.}
For any vector bundle $\F$ on $\mathfrak{C}$, the set 
$$
\left\{m \in \Z \;\begin{array}{|c} \F\otimes L^m \text{ is isomorphic to a direct summand of} \\
                         F^e_*(L^i) \text{ for some } e\ge 0 \text{ and } 0\le i\le p^e-1 \end{array} \right\}
$$
is finite. 

\smallskip
To prove the claim we first note that 
$$
H^0(\mathfrak{C},F^e_*(L^i)\otimes L^{-1}) \cong H^0(\mathfrak{C},L^{i-p^e}) 
                                               = H^0(C,\mathcal{O}_C(\lfloor(i-p^e)D\rfloor)) = 0
$$
for all $e\ge 0$ and $0 \le i\le p^e-1$. Hence, if $\mathcal{F}\otimes L^m$ is a direct
summand of $F^e_*(L^i)$, then we must have 
$H^0(\mathfrak{C},\mathcal{F}\otimes L^{m-1})=0$. On the other hand, there exists 
an integer $m_0$ such that $H^0(\mathfrak{C},\mathcal{F}\otimes L^m) \ne 0$ for 
all $m \ge m_0$. Indeed, if we choose an integer $r>0$ such that $rD$ is integral and
write $m=rs+i$ with $0\le i<r$, then we see that 
$$
H^0(\mathfrak{C},\mathcal{F}\otimes L^m) = H^0(C,\pi_*(\mathcal{F}\otimes L^{i+rs})) 
                  = H^0(C,\pi_*(\mathcal{F}\otimes L^i)\otimes\mathcal{O}_C(rD)^{\otimes s})
$$
is non-zero for $s\gg 0$, since $rD$ is ample. Thus we conclude that if $m \ge m_0+1$, 
then $\mathcal{F}\otimes L^m$ cannot be a direct summand of 
$F^e_*(L^i)$ for any $e\ge 0$ and $1\le i\le p^e-1$. This implies that the set in Claim
\ref{FFRT-GFFRT}.1 is bounded above. Similarly, a dual argument with the first 
cohomology $H^1$ gives a lower bound of the set. 
\end{proof}

Finally we rephrase the $F$-purity of $R$ in terms of the $F$-splitting of an orbifold curve. 
We say that $\mathfrak{C}$ is $F$-split if the Frobenius ring homomorphism $F\colon \mathcal{O}_\mathfrak{C} \to F_*\mathcal{O}_\mathfrak{C}$ splits as an 
$\mathcal{O}_\mathfrak{C}$-module homomorphism, 
and that a ring $R$ is $F$-pure if $\Spec R$ is $F$-split. 
Note that the Frobenius morphism on $\mathfrak{C}$ induces 
$F \colon \omega_\mathfrak{C} \to \omega_\mathfrak{C}\otimes F_*\mathcal{O}_\mathfrak{C} \cong F_*(\omega_\mathfrak{C}^p)$, and that on the first 
cohomology $F \colon H^1(\mathfrak{C},\omega_\mathfrak{C}) \to H^1(\mathfrak{C},F_*(\omega_\mathfrak{C}^p)) \cong H^1(\mathfrak{C},\omega_\mathfrak{C}^p)$ when $p$ does not divide any weight $r_{i}$. 

\begin{proposition}\label{F-split}
In the notation as above, we assume that the characteristic $p$ of the field $k$ does not divide any $r_{i}$.
Then the following three conditions are equivalent.
\begin{enumerate}
\item[$(1)$] $R(C,D) \cong R(\mathfrak{C},L)$ is F-pure.
\item[$(2)$] $\mathfrak{C}$ is F-split. 
\item[$(3)$] The induced Frobenius $F\colon H^1(\mathfrak{C},\omega_\mathfrak{C}) \to  H^1(\mathfrak{C},\omega_\mathfrak{C}^p)$ is injective. 
\end{enumerate}
\end{proposition}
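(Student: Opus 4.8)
The plan is to reduce both equivalences to a single splitting question for the Frobenius map $\mathcal{O}_{\mathfrak C}\to F_*\mathcal{O}_{\mathfrak C}$ on the orbifold curve. For $(1)\Leftrightarrow(2)$, recall that $R$ is $F$-pure exactly when the inclusion $\iota\colon R\hookrightarrow R^{1/p}=F_*R$ splits as $R$-modules. Since $\iota$ is homogeneous of degree $0$ and $R^{1/p}$ is a finitely generated $\frac{1}{p}\Z$-graded $R$-module, I would first observe that a splitting may be taken homogeneous of degree $0$ (take the degree-$0$ homogeneous component of any splitting; it is still a splitting because $\iota$ has degree $0$), and that such a splitting annihilates each summand $(R^{1/p})_{i/p\bmod\Z}$ with $i\not\equiv0\pmod p$ for pure degree reasons. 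Hence $R$ is $F$-pure iff $R\hookrightarrow(R^{1/p})_{0\bmod\Z}$ splits. The description of the summands $(R^{1/p})_{i/p\bmod\Z}$ given above, together with the projection formula for the finite flat Frobenius, identifies, as $\Z$-graded $R$-modules, $R=\Gamma_*(\mathcal{O}_{\mathfrak C})$, $(R^{1/p})_{0\bmod\Z}=\Gamma_*(F_*\mathcal{O}_{\mathfrak C})$, and $\iota=\Gamma_*$ applied to the Frobenius $\mathcal{O}_{\mathfrak C}\to F_*\mathcal{O}_{\mathfrak C}$. Since $F$ is finite flat and $\mathfrak C$ is smooth, $F_*\mathcal{O}_{\mathfrak C}$ is a vector bundle, and all the modules occurring are reflexive ($R$ is normal, hence $S_2$, and $F$ is finite), so the equivalence of categories in Lemma \ref{grading} (see also Corollary \ref{1-1cor}) converts the splitting question into: does $\mathcal{O}_{\mathfrak C}\to F_*\mathcal{O}_{\mathfrak C}$ split as $\mathcal{O}_{\mathfrak C}$-modules, i.e., is $\mathfrak C$ $F$-split.

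For $(2)\Leftrightarrow(3)$ I would dualize. By Lemma \ref{cadman} (4) we have $\pi_*\mathcal{O}_{\mathfrak C}=\mathcal{O}_C$, so $H^0(\mathfrak C,\mathcal{O}_{\mathfrak C})=k$ and $\Hom_{\mathfrak C}(\mathcal{O}_{\mathfrak C},\mathcal{O}_{\mathfrak C})=k$. Thus $\mathfrak C$ is $F$-split iff the $k$-linear map $\Hom_{\mathfrak C}(F_*\mathcal{O}_{\mathfrak C},\mathcal{O}_{\mathfrak C})\to\Hom_{\mathfrak C}(\mathcal{O}_{\mathfrak C},\mathcal{O}_{\mathfrak C})=k$, $\phi\mapsto\phi\circ F$, is surjective (if $\phi\circ F=c\neq0$ then $c^{-1}\phi$ is a splitting). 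Now apply Serre duality on the smooth proper Deligne--Mumford stack $\mathfrak C$ (the one-dimensional case of \cite[Theorem 2.22]{N2}): $\Hom_{\mathfrak C}(\mathcal{O}_{\mathfrak C},\mathcal{O}_{\mathfrak C})$ is dual to $\Ext^1_{\mathfrak C}(\mathcal{O}_{\mathfrak C},\omega_{\mathfrak C})=H^1(\mathfrak C,\omega_{\mathfrak C})$, while $\Hom_{\mathfrak C}(F_*\mathcal{O}_{\mathfrak C},\mathcal{O}_{\mathfrak C})$ is dual to $\Ext^1_{\mathfrak C}(\mathcal{O}_{\mathfrak C},F_*\mathcal{O}_{\mathfrak C}\otimes\omega_{\mathfrak C})=H^1(\mathfrak C,F_*\mathcal{O}_{\mathfrak C}\otimes\omega_{\mathfrak C})\cong H^1(\mathfrak C,F_*\omega_{\mathfrak C}^p)\cong H^1(\mathfrak C,\omega_{\mathfrak C}^p)$, using the projection formula and $H^i(F_*-)=H^i(-)$. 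By the functoriality of Serre duality, the transpose of $\phi\mapsto\phi\circ F$ is the map on $H^1$ obtained by tensoring the Frobenius $\mathcal{O}_{\mathfrak C}\to F_*\mathcal{O}_{\mathfrak C}$ with $\omega_{\mathfrak C}$; that is, it is precisely the map $F\colon H^1(\mathfrak C,\omega_{\mathfrak C})\to H^1(\mathfrak C,\omega_{\mathfrak C}^p)$ described just before the Proposition. Since $H^1(\mathfrak C,\omega_{\mathfrak C})$ is one-dimensional, surjectivity of $\phi\mapsto\phi\circ F$ is equivalent to injectivity of this transpose, which is exactly condition $(3)$.

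The main point requiring care is the last identification: that under Serre duality for $\mathfrak C$ the transpose of ``precompose with Frobenius'' is the natural Frobenius action on $H^1(\mathfrak C,\omega_{\mathfrak C})$. Over schemes this compatibility between the Grothendieck trace and Frobenius is classical — it underlies the Mehta--Ramanathan splitting criterion — and in the stack setting it should follow by transporting the statement to a smooth atlas via the equivalence \eqref{simplicial}, once one knows the adjunction \eqref{adjunction} and the duality of \cite{N2} are compatible there. The remaining steps are bookkeeping: the degree-$0$ reduction and the reflexivity of the relevant graded modules in $(1)\Leftrightarrow(2)$, and the projection-formula manipulations in $(2)\Leftrightarrow(3)$. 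Note that the hypothesis $p\nmid r_i$ is used throughout, to ensure $\mathfrak C$ is a smooth Deligne--Mumford stack (Lemma \ref{cadman} (5)) with a dualizing sheaf $\omega_{\mathfrak C}$ and with $F$ finite flat.
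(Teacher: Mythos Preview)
Your argument is correct. The equivalence $(2)\Leftrightarrow(3)$ is handled the same way in the paper: one dualizes the evaluation map $\Hom(F_*\mathcal{O}_{\mathfrak C},\mathcal{O}_{\mathfrak C})\to\Hom(\mathcal{O}_{\mathfrak C},\mathcal{O}_{\mathfrak C})$ using Serre duality on $\mathfrak C$, together with the adjunction/projection formula identification $\hom(F_*\mathcal{O}_{\mathfrak C},\mathcal{O}_{\mathfrak C})\cong F_*(\omega_{\mathfrak C}^{1-p})$. Your version and the paper's differ only in the order of these manipulations.

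Where you genuinely diverge is in linking the ring condition $(1)$ to the stack conditions. The paper does \emph{not} prove $(1)\Leftrightarrow(2)$ directly; instead it shows $(1)\Leftrightarrow(3)$ by quoting Watanabe's criterion for $F$-purity of $R(C,D)$ \cite{W} and then using Lemma~\ref{cadman}(4) and Lemma~\ref{dualizing} to identify the Frobenius map in $(3)$ with the map $H^1(C,\mathcal{O}_C(K_C))\to H^1(C,\mathcal{O}_C(\lfloor p(K_C+D')\rfloor))$ that appears in Watanabe's work. Your route is instead to pass through the equivalence of Lemma~\ref{grading}: reduce a splitting of $R\hookrightarrow R^{1/p}$ to a degree-$0$ splitting landing in $(R^{1/p})_{0\bmod\Z}\cong\Gamma_*(F_*\mathcal{O}_{\mathfrak C})$, and then transport the splitting problem to $\mathcal{O}_{\mathfrak C}\to F_*\mathcal{O}_{\mathfrak C}$ via that equivalence of categories. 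This is a legitimate and arguably more self-contained argument---it stays entirely within the framework the paper has already set up and avoids the external citation---whereas the paper's route has the advantage of making the connection to the classical $F$-purity criterion on the curve $C$ explicit. One small point worth stating when you write it up: Lemma~\ref{grading} is formulated as a bijection on objects, so you should note (as you implicitly do) that it is in fact an equivalence of categories, so that $\Hom$-sets and hence splittings correspond.
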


\begin{proof}
Since Serre duality and adjunction formula hold for $\mathfrak{C}$ as we noted in Section \ref{coh} and Section \ref{aff}, usual arguments hold as follows.
First note that $\hom(F_*\mathcal{O}_\mathfrak{C},\omega_\mathfrak{C}) \cong F_*\omega_\mathfrak{C}$ by the 
adjunction formula \eqref{adjunction}, so that 
$\hom(F_*\mathcal{O}_\mathfrak{C},\mathcal{O}_\mathfrak{C}) \cong 
  F_*(\omega_\mathfrak{C})\otimes\omega_{\mathfrak C}^{-1} \cong F_*(\omega_\mathfrak{C}^{1-p})$. 
Now $\mathfrak{C}$ is $F$-split if and only if the dual Frobenius map 
$F^{\vee} \colon \Hom(F_*\mathcal{O}_\mathfrak{C},\mathcal{O}_\mathfrak{C}) \to \Hom(\mathcal{O}_\mathfrak{C},\mathcal{O}_\mathfrak{C})$ 
is surjective. 
Since this map is identified with 
$$
F^{\vee} \colon H^0(\mathfrak{C},F_*(\omega_\mathfrak{C}^{1-p})) \to H^0(\mathfrak{C},\mathcal{O}_\mathfrak{C}), 
$$
which is dual to 
the induced Frobenius 
$F\colon H^1(\mathfrak{C},\omega_\mathfrak{C}) \to  H^1(\mathfrak{C},F_*(\omega_\mathfrak{C}^p))$ 
by the Serre duality, 
the equivalence of (2) and (3) follows. On the other hand, the equivalence of (1) and (3) follows from 
\cite{W}, since Lemma \ref{cadman} allows us to identify the induced Frobenius map in (3) with 
$F \colon H^1(C,\mathcal{O}_C(K_C)) \to H^1(C,\mathcal{O}_C(\lfloor p(K_C+D')\rfloor))$, where $D'$ is the ``fractional 
part" of $D$ so that $\pi^*(K_C+D')=K_\mathfrak{C}$. 
\end{proof}


\section{Weighted projective lines}

In this section, we consider an orbifold curve $\mathfrak{C}=C[\sqrt[r_1]{P_1},\dots,\sqrt[r_n]{P_n}\,]$ with $C=\PP^{1}$, that is, we have a coarse moduli map $\pi \colon \mathfrak{C} \to \PP^{1}$.
In this case, $\mathfrak{C}$ is called a \emph{weighted projective line}. 


\subsection{Homogeneous coordinate ring}
Here we construct $\mathfrak{C}$ as a quotient stack $[U/G]$ following \cite{GL}.
We take the homogeneous coordinate ring $T=k[z_{1}, z_{2}]$ of the projective line $\PP^{1}$ such that $P_{1}=\lbrace z_{1}=0 \rbrace, P_{2}=\lbrace z_{2}=0 \rbrace$, and $P_{i}=\lbrace z_2-\lambda_i z_1=0\rbrace$ 
for $\lambda_{i} \in k$ and $i=3, \ldots, n$. We consider a $T$-algebra 
$$S = T[x_1,\ldots,x_n]/
         \langle x_1^{r_1}-z_1,x_2^{r_2}-z_2,x_3^{r_3}-(z_2-\lambda_3z_1),\ldots,x_n^{r_n}-(z_2-\lambda_nz_1)\rangle,
$$ 
and take an open subset 
$U= \mathop{\mathrm{Spec }}\nolimits S \setminus \lbrace (x_{1}, x_{2})=(0,0) \rbrace$. 
We define a group $G$ acting on $U$ by $G = \Hom_{\mathop{\mathrm{group}}\nolimits}(\Gamma, k^{\ast})$ for 
$$
\Gamma= \bigoplus_{i=1}^{n} \Z \vec{a}_{i} \oplus \Z \vec{c} / \langle r_{i} \vec{a}_{i} - \vec{c} \mid i=1, \ldots, n \rangle,
$$
where $\vec{a}_{1}, \ldots, \vec{a}_{n}, \vec{c}$ are formal basis elements of $\bigoplus_{i=1}^{n} \Z \vec{a}_{i} \oplus \Z \vec{c}$.
In other words, we define $G$ by
$$
G = \Spec k[a_{1}^{\pm 1}, \ldots, a_{n}^{\pm 1}, c^{\pm 1}]/\langle a_{i}^{r_{i}} - c \mid i= 1, \ldots, n \rangle. 
$$ 
Here $G$ acts on $U$ diagonally
$$
(x_{1}, \ldots, x_{n}) \mapsto (a_{1} x_{1}, \ldots, a_{n} x_{n}).
$$
In other words, the $G$-action is given by the $\Gamma$-grading of $S$ defined by $\deg_{\Gamma} x_{i} = \vec{a}_{i}$ for $i= 1, \ldots, n$.
Here we use the fact that giving $G$-actions is equivalent to giving gradings in $\Gamma$, since we have the similar statement as in \cite[Proposition 4.7]{Muk}, which is also proved similarly. 
This action is compatible with the natural morphism $U \to \PP^{1}$ induced by the $T$-algebra structure of $S$, and gives $\pi' \colon [U/G] \to \PP^{1}$.
Furthermore, by $G$-weight spaces $k a_{i}$ with $G$-action given by the multiplication of $a_{i}$, we have line bundles $\mathcal{L}_{i} =[U \times k a_{i} / G]$ on the quotient stack $[U/G]$, and sections $s_{i} \in \Gamma([U/G], \mathcal{L}_{i})$ defined by $x_{i}$.
Since we have natural isomorphisms $\alpha \colon \mathcal{L}_{i}^{\otimes r_{i}} \cong (\pi')^{\ast} \mathcal{O}_{\PP^{1}}(P_{i})$ sending $x_{i}^{r_{i}}$ to $\lambda_{i}z_{1} - z_{2}$, this datum defines a morphism $\varphi \colon [U/G] \to \mathfrak{C}$ such that $\pi'=\pi \circ \varphi$ by Lemma 
\ref{cadman} (3).
By the local description \eqref{local}, we see that $\varphi$ is an isomorphism $[U/G] \cong \mathfrak{C}$.
In the following, we identify $\mathfrak{C}$ with the quotient stack $[U/G]$ via this isomorphism, and we call the $\Gamma$-graded algebra $S$ a \emph{homogeneous coordinate ring} of $\mathfrak{C}$.
We have $Q_{i}=\lbrace x_{i} = 0 \rbrace$ on $\mathfrak{C}=[U/G]$, and $\mathcal{O}_{\mathfrak{C}}(Q_{i}) \cong \mathcal{L}_{i}$.

By this construction, we have an identification $\Pic \mathfrak{C} \cong \Gamma$ sending $\mathcal{O}_{\mathfrak{C}}(Q_{i})$ to $\vec{a}_{i}$.
We write by $\deg \colon \Pic \mathfrak{C} \to \mathbb{Q}$ the map taking degrees of Chern classes of line bundles on $\mathfrak{C}$. 
We have $\deg \vec{a}_{i} = \frac{1}{r_{i}}$ for $i=1, \ldots, n$, and $\deg \vec{c} =1$, and in particular, when $p$ does not divide any weight $r_{i}$, we have $\deg \omega_{\mathfrak{C}} = n-2 - \sum_{i=1}^{n} \frac{1}{r_{i}}$ by Lemma \ref{dualizing}.
We put $\delta_{\mathfrak{C}}=n-2 - \sum_{i=1}^{n} \frac{1}{r_{i}}$.


\subsection{Indecomposable vector bundles on a weighted projective line}
We introduce a classification of indecomposable vector bundles on $\mathfrak{C}$ by \cite{CB}.
We consider a lattice 
$$
\mathfrak{L}= \Z \alpha_{\ast} \oplus \bigoplus_{i=1}^{n} \bigoplus_{j=1}^{r_{i}-1} \Z \alpha_{ij}, 
$$
and put $\hat{\mathfrak{L}}= \mathfrak{L} \oplus \Z \delta$.
Here $\alpha_{\ast}, \alpha_{ij}$ corresponds to the following graph consisting of vertices $\ast, ij$ for $i=1, \ldots, n$ and $j=1, \ldots, r_{i} -1$, and edges joining $\ast$ and $i1$, and $ij$ and $i j+1$, for $i=1, \ldots,n, j = 1, \ldots, r_{i}-2$.
The \emph{Cartan matrix} is defined by $C= 2E - A$, where $E$ is the identity matrix and $A$ is the adjacency matrix of the above graph.
This defines an inner product $( , ) \colon \mathfrak{ L} \times \mathfrak{L} \to \mathfrak{L}$.

We define the set $\Pi= \lbrace \alpha_{\ast} \rbrace \cup \lbrace \alpha_{ij} \mid i =1, \ldots, n, j = 1, \ldots, r_{i}-1 \rbrace$ of simple roots as follows.
An element $\alpha \in \Pi$ defines a reflection $\mathfrak{L} \to \mathfrak{L}$; $\lambda\mapsto\lambda-(\alpha,\lambda)\alpha$.
We define the \emph{Weyl group} $W$ by the subgroup of $\mathop{\mathrm{Aut}}\nolimits\, \mathfrak{L}$ generated by these reflections, and put $\Delta^{\mathop{\mathrm{re}}\nolimits} = W \Pi$. 
We define the fundamental set 
$$
M=\lbrace v \in \mathfrak{L}^{+} \mid v \neq 0, (\alpha, v) \le 0 \mathop{\mathrm{ for }}\nolimits
\alpha \in \Pi, \mathop{\mathrm{ support }}\nolimits \mathop{\mathrm{ of }}\nolimits v \mathop{\mathrm{ is }}\nolimits \mathop{\mathrm{ connected }}\nolimits \rbrace
$$
where $\mathfrak{L}^{+}=\lbrace v=l_{\ast} \alpha_{\ast} + \sum l_{ij} \alpha_{ij} \in \mathfrak{L} \mid l_{\ast}, l_{ij} \ge 0 \rbrace$.
We put $\Delta=\Delta^{\mathop{\mathrm{re}}\nolimits} \cup W M \cup W(-M)$, and $\hat{\Delta} = \lbrace \alpha + m  \delta \in \hat{\mathfrak{L}} \mid \alpha \in \Delta, m \in \Z \rbrace \cup \lbrace m \delta \in \hat{\mathfrak{L}} \mid m \in \Z , m \neq 0\rbrace$.
An element $v$ of $\hat{ \Delta}$ is called a \emph{root}.
It is called a \emph{real root}, if $v=\alpha + m \delta$ for $\alpha \in \Delta^{re}$, otherwise it is called \emph{imaginary root}.

For a vector bundle $\E$ on $\mathfrak{C}$, we associate a vector bundle $\F=\pi_{\ast} \E$ on $\PP^{1}$ and flags 
$$
\lbrace 0=\F_{ir_{i}} \subset \cdots \subset \F_{ij} \subset \cdots\subset \F_{i0} =\F|_{P_{i}} \rbrace_{i=1}^{n}, 
$$ 
where we define $\F_{ij}$ by the image of $\pi_{\ast} \E(- j Q_{i}) |_{P_{i}} \to \pi_{\ast} \E|_{P_{i}}=\F_{i0}$ for $i=1, \ldots, n, j= 1, \ldots, r_{i}$.
We note that $\F_{ir_{i}}=0$ follows from $r_{i} Q_{i} \cong \pi^{\ast} P_{i}$ and the projection formula.
We define a \emph{type} $t(\E) \in \hat{\mathfrak{L}}$ of $\E$ by 
$$
t(\E) = ( \rk \F ) \alpha_{\ast} + \sum_{i=1}^{n} \sum_{j=1}^{r_{i}-1} ( \dim \F_{ij} ) \alpha_{ij} + (\deg \F) \delta.
$$
This defines a map $t \colon K(\mathfrak{C}) \to \hat{\mathfrak{L}}$ from the Grothendieck group $K(\mathfrak{C})$ of $\mathfrak{C}$.
We consider the subset $\hat{\mathfrak{L}}^{+} \subset \hat{\mathfrak{L}}$ of positive linear combinations of $\alpha_{\ast} + m \delta$, $\delta$, $\alpha_{ij}$ and $-\sum_{j=1}^{r_{i}-1} \alpha_{ij} + \delta$ for $m \in \Z$ and $i=1, \ldots, n$.
We call elements in $\hat{\Delta} \cap \hat{\mathfrak{L}}^{+}$ \emph{positive roots}.

The following is due to \cite[Theorem 1]{CB}.

\begin{theorem}[Crawley-Boevey]
\label{indec}
For an element $t \in \hat{\mathfrak{L}}$, there exists an indecomposable sheaf on $\mathfrak{C}$ with the type $t$ if and only if 
$t$ is a positive root. There is  a unique isomorphism class of indecomposable sheaf for a real root, infinitely many for an imaginary root.
\end{theorem}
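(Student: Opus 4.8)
The plan is to deduce Theorem~\ref{indec} by transporting Kac's theorem for quivers to the hereditary category $\Coh\mathfrak{C}$, exploiting the $\Gamma$-graded homogeneous coordinate ring and a full exceptional sequence of line bundles. The first step is homological bookkeeping. Since $\mathfrak{C}$ is a smooth Deligne--Mumford curve, $\Coh\mathfrak{C}$ has homological dimension one, so the Euler form $\langle\E,\F\rangle=\dim\Hom(\E,\F)-\dim\Ext^1(\E,\F)$ descends to a bilinear form on $K(\mathfrak{C})$; using the Serre duality recorded in Section~\ref{coh} one checks that, under the identification $t\colon K(\mathfrak{C})\xrightarrow{\sim}\hat{\mathfrak{L}}$, this form is the one whose symmetrization is the Cartan form of the star-shaped graph $T_{r_1,\dots,r_n}$ with the $\delta$-direction (recording the degree of $\pi_*$) adjoined. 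Consequently $\hat\Delta$ is precisely the set of real and imaginary roots of the associated symmetric Kac--Moody algebra $\mathfrak{g}$, and the theorem reduces to three assertions: an indecomposable of type $v$ exists iff $v$ is a positive root; it is unique up to isomorphism when $v$ is a real root; and there are infinitely many isomorphism classes when $v$ is imaginary.

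For the implication ``positive root $\Rightarrow$ indecomposable exists'' I would argue by cases. If $v$ is a real root then $\langle v,v\rangle=1$; starting from a full exceptional sequence on $\mathfrak{C}$ built from $\mathcal{O}_\mathfrak{C}$ and the $\mathcal{O}_\mathfrak{C}(jQ_i)$, the braid-group action by mutations on exceptional sequences produces an exceptional object $\E$ of type $v$, and exceptionality ($\Hom(\E,\E)=k$, $\Ext^1(\E,\E)=0$) simultaneously yields the uniqueness. If $v$ is imaginary (so $\langle v,v\rangle\le 0$) I would exhibit a family: the primitive imaginary root $\delta$ is realized by the skyscraper sheaves $\mathcal{O}_P$ at the infinitely many non-stacky points $P\in C$, the root $m\delta$ by length-$m$ thickenings, and the remaining imaginary roots are handled via the moduli stack of objects of type $v$, which is smooth of dimension $-\langle v,v\rangle\ge 0$; the standard argument (as in Kac's treatment of quivers) that a single indecomposable $\E$ accounts only for a substack of dimension $-\dim\Hom(\E,\E)<0$ then forces the indecomposable locus to be infinite.

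For the converse ``indecomposable of type $v$ $\Rightarrow$ $v$ is a positive root,'' and for the exact count, I would pass to a finite-field model: fix a form of $\mathfrak{C}$ over $\mathbb{F}_q$ and let $a_\mathfrak{C}(v,q)$ be the number of isomorphism classes of absolutely indecomposable coherent sheaves of type $v$. Using the Hall algebra of $\Coh_{\mathbb{F}_q}\mathfrak{C}$ (a bialgebra by Green's theorem) and a Hua-type counting, one shows $a_\mathfrak{C}(v,q)$ is a polynomial in $q$ whose non-vanishing is governed by the root system of $\mathfrak{g}$ --- this is the step that invokes the isomorphism of the composition subalgebra of the Hall algebra with the positive part of the quantized enveloping algebra of $\mathfrak{g}$, so that Kac's original counting argument carries over. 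If $v$ is not a positive root then $a_\mathfrak{C}(v,\cdot)\equiv 0$, so there is no absolutely indecomposable of type $v$ over any $\mathbb{F}_q$; by a standard spreading-out argument the existence of an indecomposable of type $v$ over $\overline{k}$ would produce an absolutely indecomposable one over a finite field, so there is none over $\overline{k}$. For a real root $a_\mathfrak{C}(v,\cdot)\equiv 1$, recovering uniqueness, and for an imaginary root $a_\mathfrak{C}(v,q)\to\infty$ as $q\to\infty$, recovering infinitude.

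The hard part is the Hall-algebra input of the third paragraph: verifying that the orbifold curve $\mathfrak{C}$ --- possibly of positive genus, with $p$ allowed to be small provided it divides no weight --- genuinely fits Kac's framework, with the count of absolutely indecomposables given by the predicted $q$-polynomial. One route is a tilting equivalence $D^b(\Coh\mathfrak{C})\simeq D^b(\mathrm{mod}\,\Lambda)$ with $\Lambda$ a canonical (squid) algebra followed by a transfer of Kac's theorem from $\Lambda$; because this is only a \emph{derived} equivalence, one must then track carefully which derived-indecomposables are honest sheaves. The alternative is a direct Hall-algebra computation on $\Coh\mathfrak{C}$ in the spirit of Schiffmann's work on curves. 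Everything else --- the heredity of $\Coh\mathfrak{C}$, the identification of the Euler form with the Cartan form via Serre duality, the exceptional-object mutations, and the families realizing imaginary roots --- is comparatively routine given the stack-theoretic machinery already assembled in Section~\ref{coh} and the local description in Lemma~\ref{cadman}.
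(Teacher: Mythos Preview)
The paper does not prove this theorem: it is stated with attribution and the line ``The following is due to \cite[Theorem 1]{CB}'' is the entirety of the paper's argument. Theorem~\ref{indec} is quoted as a black box from Crawley-Boevey and then applied in Theorem~\ref{FRT}. So there is no proof in the paper to compare your proposal against; you have written a multi-page research programme where the paper writes a single citation.

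That said, a few remarks on your sketch. First, the theorem as stated here is only for weighted projective lines, i.e.\ $C=\PP^1$; your parenthetical ``possibly of positive genus'' is out of scope and would require a genuinely different (and harder) statement, since for $g\ge 1$ the relevant Kac--Moody data change. Second, your outline is closer to the Hall-algebra/Schiffmann circle of ideas than to Crawley-Boevey's actual proof in \cite{CB}, which proceeds by identifying coherent sheaves on $\mathfrak{C}$ with certain representations (parabolic bundles, or equivalently squid-algebra modules) and then invoking his earlier work on Kac's conjecture for quivers and deformed preprojective algebras; he does not run a Hua-type count directly on $\Coh\mathfrak{C}$. Third, your handling of the derived tilting issue is honest --- tracking which derived-indecomposables are genuine sheaves is exactly the subtlety, and it is where the definition of $\hat{\mathfrak{L}}^{+}$ and ``positive root'' in the paper earns its keep. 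None of this is wrong, but for the purposes of this paper the correct move is simply to cite \cite{CB}.
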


We take an ample Cartier divisor $D=\sum_{i=1}^{n} \frac{s_{i}}{r_{i}} P_{i}$ on $\PP^{1}$, and put $L = \pi^{\ast} \mathcal{O}_{\PP^{1}}( D)$.
Combining Theorem \ref{indec} with Corollary \ref{FFRT-GFFRT}, we have the following:

\begin{theorem}\label{FRT}
For a weighted projective line $\mathfrak{C}$, the set of equivalence classes of indecomposable vector bundles with respect to $\sim_{L}$ is finite, if and only if $\delta_{\mathfrak{C}} < 0$.
In this case, the graded ring $R$ has FFRT.
\end{theorem}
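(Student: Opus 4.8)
The plan is to translate the finiteness (resp.\ infiniteness) of $\{\text{indecomposable vector bundles}\}/\!\sim_L$ into a combinatorial statement about the root system attached to $\mathfrak{C}$ via Theorem \ref{indec}, and then check that the sign of $\delta_{\mathfrak{C}}$ governs whether that root system is finite. First I would observe that the equivalence $\sim_L$ corresponds, under the type map $t\colon K(\mathfrak{C})\to\hat{\mathfrak{L}}$, to translation by the class $t(L)\in\hat{\mathfrak{L}}$: tensoring by $L=\pi^*\mathcal{O}_{\PP^1}(D)$ shifts a type by a fixed vector lying in the $\delta$- and $\alpha_{ij}$-directions (the precise shift is computed from $\pi_*(\mathcal{E}\otimes L)$ and the flag definition, but the point is only that it is a single lattice vector independent of $\mathcal{E}$). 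Hence the set of $\sim_L$-classes of indecomposables is finite if and only if the set of positive roots $\hat{\Delta}\cap\hat{\mathfrak{L}}^+$, modulo this translation, is finite; and by Theorem \ref{indec} we may even ignore the multiplicities attached to imaginary roots, so we are reduced to a purely lattice-theoretic count.

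Next I would analyze $\hat{\Delta}$ according to the type of the underlying star-shaped diagram, which is exactly the classification of extended Dynkin diagrams. The quantity $\delta_{\mathfrak{C}}=n-2-\sum_i 1/r_i$ is (up to sign and normalization) the standard Euler-form defect of this diagram: $\delta_{\mathfrak{C}}<0$ holds precisely in the finite-type cases $A_1$, $A_2$ (two points), $(2,2,r)$ i.e.\ $D$, $(2,3,3)$, $(2,3,4)$, $(2,3,5)$ i.e.\ $E_6,E_7,E_8$, while $\delta_{\mathfrak{C}}=0$ gives the affine diagrams $\widetilde D_4,\widetilde E_6,\widetilde E_7,\widetilde E_8$ and $\delta_{\mathfrak{C}}>0$ the wild cases. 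When $\delta_{\mathfrak{C}}<0$, the root system $\Delta$ inside $\mathfrak{L}$ is \emph{finite} (there are no imaginary roots: the fundamental set $M$ is empty because $(\alpha,v)\le 0$ for all $\alpha\in\Pi$ forces $(v,v)>0$ in finite type). Then $\hat{\Delta}$ consists of $\alpha+m\delta$ with $\alpha\in\Delta$ and the nonzero multiples $m\delta$; modulo the translation by $t(L)$—whose $\delta$-component is nonzero since $L$ has positive degree—only finitely many classes survive. This yields the finiteness direction, and then Corollary \ref{FFRT-GFFRT} (via $R\cong R(\mathfrak{C},L)$) immediately gives that $R$ has FFRT.

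Conversely, when $\delta_{\mathfrak{C}}\ge 0$ I would exhibit infinitely many $\sim_L$-inequivalent indecomposables. If $\delta_{\mathfrak{C}}=0$ the imaginary roots are exactly the integer multiples of the unique positive isotropic vector $\delta_0\in\mathfrak{L}$ (the minimal imaginary root of the affine root system), and their lifts $k\delta_0+m\delta$ run over a rank-two sublattice on which the translation by $t(L)$ has infinite-index image; if $\delta_{\mathfrak{C}}>0$ the fundamental set $M$ is genuinely infinite and already produces infinitely many classes. In either case the types are pairwise non-$\sim_L$-equivalent, and since by Theorem \ref{indec} each corresponds to at least one (in fact infinitely many) indecomposable vector bundle on $\mathfrak{C}$, the set of $\sim_L$-classes is infinite. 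The main obstacle I anticipate is the bookkeeping of the real-root contribution in the $\delta_{\mathfrak{C}}<0$ case: one must confirm that $\hat{\Delta}\cap\hat{\mathfrak{L}}^+$, after quotienting by the one-dimensional translation, really is finite—i.e.\ that the positivity constraint $\hat{\mathfrak{L}}^+$ together with a fixed $\delta$-shift pins down $m$ from $\alpha$—rather than the identification of which diagram arises, which is standard once $\delta_{\mathfrak{C}}$ is written in the form $n-2-\sum 1/r_i$.
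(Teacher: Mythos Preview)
Your overall strategy---reduce to Crawley--Boevey's classification (Theorem~\ref{indec}) and invoke the finite/affine/wild trichotomy for the star-shaped diagram---is the same as the paper's. The identification of $\delta_{\mathfrak{C}}<0$ with the Dynkin case and the consequent finiteness of $\Delta=\Delta^{\mathrm{re}}$ is exactly how the paper proceeds.

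There is, however, an inaccuracy in your reduction step. You assert that $\sim_L$ corresponds on $\hat{\mathfrak{L}}$ to translation by a \emph{fixed} vector $t(L)$, independent of $\E$. This is not true: for a bundle $\E$ of rank $r$, tensoring by $L$ changes $\deg\pi_*\E$ (the $\delta$-coefficient) by an amount proportional to $r$. So the ``translation'' depends on the rank, and the problem does not reduce to a single lattice quotient. The paper handles this cleanly in the $\delta_{\mathfrak{C}}<0$ direction by first observing that ranks of indecomposables are bounded (say by $r_{\max}$, since $\Delta$ is finite), and then using the specific power $L^{r_1\cdots r_n}\cong\pi^*\mathcal{O}_{\PP^1}(d)$, which is a pullback from $\PP^1$ and therefore shifts \emph{only} the $\delta$-coefficient, by $d\cdot r$. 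One can then normalize the $\delta$-coefficient to lie in $[0,dr_{\max}]$, leaving finitely many types, each a real root carrying a unique indecomposable.

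For the converse ($\delta_{\mathfrak{C}}\ge 0$), your route is unnecessarily elaborate. You try to produce infinitely many \emph{roots} that are $\sim_L$-inequivalent, which requires controlling how tensoring by $L$ moves types and checking positivity in $\hat{\mathfrak{L}}^+$. The paper's argument is a one-liner: pick a single imaginary positive root; by Theorem~\ref{indec} there are infinitely many isomorphism classes of indecomposables with that fixed type; since tensoring by any nonzero power of $L$ changes the type, no two of these can be $\sim_L$-equivalent. You explicitly discarded this information (``we may even ignore the multiplicities attached to imaginary roots''), which is precisely what makes the paper's argument short. Your rank-two lattice picture in the affine case and the appeal to an ``infinite-index image'' are not needed, and as stated they inherit the same rank-dependence issue noted above.

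Finally, for the FFRT conclusion the paper appeals directly to Corollary~\ref{1-1cor} rather than to Corollary~\ref{FFRT-GFFRT}; either works here.
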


\proof
By direct computations, we see that $\delta_{\mathfrak{C}} < 0$ if and only if the corresponding graphs are of 
finite type. 
It is equivalent to saying that $\Delta$ is equal to $\Delta^{\mathop{\mathrm{re}}\nolimits}$ as in \cite[Chapter I]{K}. 
It is also known that in this case $\Delta=\Delta^{\mathop{\mathrm{re}}\nolimits}$ is finite.

Hence if $\delta_{\mathfrak{C}} <0$, then ranks of indecomposable vector bundles are bounded.
We put
$$
r_{\max}=\max \lbrace \rk \E \mid \E \mathop{\mathrm{ indecomposable }}\nolimits \mathop{\mathrm{ vector }}\nolimits \mathop{\mathrm{ bundle }}\nolimits \mathop{\mathrm{ on }}\nolimits \mathfrak{C} \rbrace.
$$
We have $L^{\otimes r_{1} \cdots r_{n} } = \mathcal{O}_{\mathfrak{C}}(r_{1} \cdots r_{n} \pi^{\ast} D) \cong \pi^{\ast} \mathcal{O}_{\PP^{1}}(d)$, where $d=r_{1} \cdots r_{n} \sum_{i=1}^{n} \frac{s_{i}}{r_{i}}$ is a positive integer since $D$ is ample.
Then after tensoring $L$ suitably many times, the coefficient of $\delta$ in the type of any indecomposable vector bundle lies between $0$ and $d r_{\max}$.
Hence the set of equivalence classes is finite by Theorem \ref{indec}.

On the other hand, if $\delta_{\mathfrak{C}} \ge 0$, then we have infinitely many isomorphism classes of indecomposable vector bundles whose type is a fixed imaginary root.
Since tensoring $L$ changes types, these vector bundles are not equivalent to each other with respect $\sim_{L}$. 

Finally from the description of grading structure of $R^{\frac{1}{q}}$ below Definition \ref{ffrt}, we see that the last statement follows from Corollary \ref{1-1cor}.
\endproof

\begin{remark}
As a special case of $\delta_{\mathfrak{C}}<0$, we have the toric case, in which the 
weighted projective line $\mathfrak{C}$ has at most two stacky points. In this case, for every line bundle $L$ on 
$\mathfrak{C}$, the Frobenius push-forward $F^e_*L$ is decomposed into direct sum of line bundles (cf.\ 
\cite[Theorem 4.5]{OU}). 
\end{remark}


\section{Frobenius summands on weighted projective lines with $\delta_{\mathfrak{C}}=0$}

In this section we study the structure of the Frobenius push-forward $F^e_*\mathcal{O}_{\mathfrak{C}}$ 
on a weighted projective line $\mathfrak{C}$ when $\delta_{\mathfrak{C}}$ is equal to 0. We assume 
that $\mathfrak{C}$ has $n$ stacky points $Q_1,\dots,Q_n$ of weights $r_1,\dots,r_n$ lying over 
$\lambda_1,\dots,\lambda_n \in \PP^1$, respectively.
We also assume that $p$ does not divide any weight $r_i$ and 
that the weights are ordered as $r_1\le\cdots\le r_n$. Since the assumption $\delta_{\mathfrak{C}}=0$ 
is equivalent to $\sum_{i=1}^{n} \frac{r_i-1}{r_i}=2$, it follows that $n=3$ or $4$ and the weight 
$(r_1,\dots,r_n)$ is either one of the following: $(2,3,6)$, $(2,4,4)$, $(3,3,3)$, $(2,2,2,2)$. Also the 
canonical bundle $\omega_{\mathfrak{C}}$ is torsion of order $m:=\mathrm{lcm}\{r_1,\dots,r_n\}=r_n$. 

\begin{lemma}\label{r-cover}
Let $\mathfrak{C}$ be a weighted projective line with $\delta_{\mathfrak{C}}=0$ as above, and suppose that the characteristic $p$ of $k$ does not divide $m=r_n$. Then there exists an elliptic curve $E$ with $\mu_m\cong\Z/m\Z$-action and 
an $m$-fold covering $f \colon E \to \PP^1$ which factors through $\mathfrak{C}$ as 
$$f=\pi\circ\varphi \colon E \stackrel{\varphi}{\lra} \mathfrak{C} \stackrel{\pi}{\lra} \PP^1,$$
satisfying the following conditions.
\begin{enumerate}
\item[$(1)$] $\mathfrak{C}=[E/\mu_m]$ and $\PP^1=E/\mu_m$ via $\varphi$ and $f$, respectively.
\item[$(2)$] $\varphi$ is unramified and $\varphi_*\mathcal{O}_E \cong \bigoplus_{\ell=0}^{m-1} \omega_\mathfrak{C}^{\otimes(-\ell)}$.
\item[$(3)$] There exist exactly $m/r_i$ points of $E$ lying over the stacky point $Q_i$ whose ramification 
index with respect to $f$ is equal to $r_i$.
\item[$(4)$] Choose the point $P_n \in E$ lying over $Q_n$ as the zero element of $E$ as a group. 
If $P \in E$ is a ramification point of $f$ lying over one of the stacky points $Q_i$, then $P$ is an 
$m$-torsion point with respect to the group law of $(E,P_n)$. 
\item[$(5)$] $\mathfrak{C}$ is F-split if and only if $E$ is ordinary (or equivalently, F-split), and in this case, 
$p \equiv 1 \pmod m$.  
\end{enumerate}
\end{lemma}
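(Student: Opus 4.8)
The plan is to \emph{construct} the elliptic curve $E$ directly from $\mathfrak{C}$, as the cyclic cover that trivializes the torsion line bundle $\omega_{\mathfrak{C}}$. Since $\omega_{\mathfrak{C}}$ has order exactly $m=r_n$ in $\Pic\mathfrak{C}$, fix an isomorphism $\omega_{\mathfrak{C}}^{\otimes m}\cong\mathcal{O}_{\mathfrak{C}}$ and put $E=\mathrm{Spec}_{\mathfrak{C}}\bigl(\bigoplus_{\ell=0}^{m-1}\omega_{\mathfrak{C}}^{\otimes(-\ell)}\bigr)$, with the algebra structure given by this trivialization and with the tautological $\mu_m\cong\Z/m\Z$-action coming from the $\Z/m\Z$-grading; then $\varphi\colon E\to\mathfrak{C}$ is a $\mu_m$-torsor, so $\mathfrak{C}\cong[E/\mu_m]$ and $\varphi_*\mathcal{O}_E\cong\bigoplus_{\ell=0}^{m-1}\omega_{\mathfrak{C}}^{\otimes(-\ell)}$. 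The first thing to check is that $\varphi$ is \'etale: \'etale-locally $\omega_{\mathfrak{C}}$ is trivial and the cover has the shape $\Spec A[t]/(t^m-u)$ with $u$ a unit, which is \'etale since $p\nmid m$; in particular $\varphi$ is unramified, which gives (2). Next, $E$ is an honest \emph{scheme} rather than a stack: restricting $\omega_{\mathfrak{C}}$ to the residual gerbe $B\mu_{r_i}$ at $Q_i$, Lemma \ref{dualizing} shows it equals $\mathcal{O}_{\mathfrak{C}}((r_i-1)Q_i)|_{B\mu_{r_i}}$, a generator of $\Pic(B\mu_{r_i})\cong\Z/r_i\Z$ (while $\pi^*\omega_{\PP^1}$ and the $\mathcal{O}_{\mathfrak{C}}(Q_j)$, $j\neq i$, restrict trivially), so the cyclic cover untwists every stacky point and $E$ has trivial automorphism groups; hence $E$ is a smooth projective curve, connected because $\omega_{\mathfrak{C}}$ has order exactly $m$. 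Since $\varphi$ is \'etale, $\omega_E\cong\varphi^*\omega_{\mathfrak{C}}$, of degree $m\,\delta_{\mathfrak{C}}=0$, so $g(E)=1$ and, choosing a base point, $E$ is an elliptic curve. Passing to coarse spaces in $\mathfrak{C}=[E/\mu_m]$ yields $\PP^1=E/\mu_m$ with $f=\pi\circ\varphi$ the quotient map, which is (1).

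For (3): near $Q_i$ the coarse map $\pi$ is \'etale-locally $[\mathbb{A}^1/\mu_{r_i}]\to\mathbb{A}^1$, $x\mapsto x^{r_i}$, and $\varphi$ is \'etale of degree $m$; an algebraic space \'etale of degree $m$ over $B\mu_{r_i}$ is a disjoint union of $m/r_i$ copies of the free $\mu_{r_i}$-set $\mu_{r_i}$, so there are exactly $m/r_i$ points of $E$ over $Q_i$, and at each of them $f$ is \'etale-locally $y\mapsto y^{r_i}$, hence ramified of index $r_i$ (and $(m/r_i)\,r_i=m=\deg f$, as a check). For (4): by (3) the unique point $P_n$ over $Q_n$ has stabilizer all of $\mu_m$, so with $P_n$ as the origin the $\mu_m$-action on $E$ (faithful, since $\mathfrak{C}=[E/\mu_m]$ is an orbifold curve) fixes $0$, i.e.\ $\mu_m\hookrightarrow\mathrm{Aut}(E,0)$ and acts by group automorphisms. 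A ramification point $P$ over $Q_i$ is fixed by a group automorphism $\beta$ of order $r_i$, so $(\beta-1)P=0$ in $E$; feeding this into the minimal relation of $\beta$ in $\mathrm{End}(E)$ ($\beta=-1$ if $r_i=2$; $\beta^2+1=0$ if $r_i=4$; $\beta^2+\beta+1=0$ if $r_i=3$; $\beta^2-\beta+1=0$ if $r_i=6$) gives respectively $2P=0$, $2P=0$, $3P=0$, $P=0$; since $r_i\mid m$ in each of the four weight patterns, $mP=0$, so $P$ is $m$-torsion.

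For (5): since $p\nmid m$, any $F$-splitting of $E$ averages over $\mu_m$ to a $\mu_m$-equivariant one, and by the description of the Frobenius of a quotient stack (Example \ref{quot}) $\mu_m$-equivariant $F$-splittings of $E$ correspond exactly to $F$-splittings of $\mathfrak{C}=[E/\mu_m]$; conversely pulling back along $\varphi$ turns an $F$-splitting of $\mathfrak{C}$ into one of $E$. Hence $\mathfrak{C}$ is $F$-split iff $E$ is, i.e.\ (for an elliptic curve) iff $E$ is ordinary. Finally, if $\mathfrak{C}$ is $F$-split then by Proposition \ref{F-split}(3) the map $F\colon H^1(\mathfrak{C},\omega_{\mathfrak{C}})\to H^1(\mathfrak{C},\omega_{\mathfrak{C}}^{\otimes p})$ is injective; since $H^1(\mathfrak{C},\omega_{\mathfrak{C}})\cong H^0(\mathfrak{C},\mathcal{O}_{\mathfrak{C}})^{\vee}\neq0$, the target is nonzero, so by Serre duality and $\delta_{\mathfrak{C}}=0$ we get $\omega_{\mathfrak{C}}^{\otimes(1-p)}\cong\mathcal{O}_{\mathfrak{C}}$, i.e.\ $m\mid p-1$.

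The step I expect to be the main obstacle is the construction: proving that the cyclic cover $E$ is genuinely a scheme of genus $1$ — which rests on the computation of $\omega_{\mathfrak{C}}$ on the residual gerbes so that the cover untwists all orbifold points — and, relatedly, making precise in (5) the interaction of Frobenius with the \'etale map $\varphi$ via Section \ref{frob}. Once $E$ has been produced, (3) is a local calculation and (4) is a short case check over $r_i\in\{2,3,4,6\}$ using that $\beta$ satisfies its cyclotomic relation in the zero-divisor-free ring $\mathrm{End}(E)$.
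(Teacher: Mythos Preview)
Your construction of $E$ as the cyclic cover $\mathrm{Spec}_{\mathfrak{C}}\bigl(\bigoplus_{\ell=0}^{m-1}\omega_{\mathfrak{C}}^{-\ell}\bigr)$ is exactly the paper's, and your treatment of (1)--(3) and (5) is essentially the same, with only cosmetic differences: the paper reads off $g(E)=1$ from Hurwitz applied to $f$ rather than from $\omega_E\cong\varphi^*\omega_{\mathfrak{C}}$, and for (5) it works directly with the decomposition $H^1(E,\mathcal{O}_E)\cong\bigoplus_\ell H^1(\mathfrak{C},\omega_{\mathfrak{C}}^{-\ell})$ and the criterion of Proposition~\ref{F-split} rather than averaging splittings, but the content is the same.

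The genuine difference is in (4). The paper proceeds case by case on the weight type: for $(3,3,3)$ and $(2,2,2,2)$ it uses the total ramification $mP_i=f^*(\lambda_i)\sim f^*(\lambda_j)=mP_j$ directly, while for $(2,4,4)$ it writes down explicit affine equations for $E$ and exhibits a rational function with divisor $4P_1-4P_4$, and for $(2,3,6)$ it simply says the computation is similar and omitted. Your argument is uniform and more conceptual: since $P_n$ is fixed by all of $\mu_m$, the $\mu_m$-action lands in $\mathrm{Aut}(E,P_n)\subset\mathrm{End}(E)$; a ramification point $P$ over $Q_i$ is fixed by an element $\beta$ of order $r_i$, and since $\mathrm{End}(E)$ is a domain, $\beta$ satisfies the $r_i$-th cyclotomic relation, which together with $\beta P=P$ forces $r_iP=0$ (or better) in each case. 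This avoids any explicit equations and handles all four weight types at once; the paper's approach, by contrast, is entirely elementary and makes the covering concrete, which is useful if one wants the actual equation of $E$. Both are correct.
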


\begin{proof}
Let $\mathcal{A}=\bigoplus_{\ell\in\Z_m} \omega_\mathfrak{C}^{\otimes(-\ell)}$ with an $\mathcal{O}_\mathfrak{C}$-algebra structure defined by $\omega_\mathfrak{C}^{\otimes(-m)} \cong \mathcal{O}_\mathfrak{C}$ and let $\varphi \colon E=\mathrm{Spec}_\mathfrak{C}\mathcal{A} \to \mathfrak{C}$ be the induced morphism.  
We recall the description of $\mathfrak{C}=[U/G]$ in subsection 4.1. 
Then the $\mathcal{O}_\mathfrak{C}$-algebra $\mathcal A$ corresponds to the $\Gamma$-graded $S$-algebra $S[\xi]/\langle\xi^m-1\rangle$. 
Here $\deg_{\Gamma} \xi=-\deg_{\Gamma} \omega_\mathfrak{C}=-(n-2)\vec{c}+\sum_{i=1}^n\vec{a}_i \in \Gamma$. 

By local computations, we see that every closed point in $E$ has a trivial automorphism functor. 
Hence by \cite[Theorem 2.2.5]{C}, $E$ is an algebraic space, and the coarse moduli map $E \to \mathrm{Spec}_{\PP^1}(\pi_*\mathcal{A})$ is an isomorphism. 
Thus $f=\pi\circ\varphi$ 
is identified with the structure morphism $\mathrm{Spec}_{\PP^1}(\pi_*\mathcal{A}) \to \PP^1$. 


$$
\xymatrix{
E \ar[d]_{\cong} \ar[r]^{\varphi} \ar[rd]^{f} & \mathfrak{C} \ar[d]^{\pi} \\
\mathrm{Spec}_{\PP^1}(\pi_*\mathcal{A}) \ar[r] & \mathbb{P}^{1} 
}
$$
\noindent 
We have a $\mu_m$-action on $E=\mathrm{Spec}_\mathfrak{C}\mathcal{A}$ by $\deg\xi=1 \in \Z/m\Z=(\mu_m)^{\vee}$. 
This gives a proof of (1) and (2). 

We prove (3) and (4) examining the $m$-fold covering 
$f \colon E \cong \mathrm{Spec}_{\PP^1}(\pi_*\mathcal{A}) \to \PP^1$ 
case by case for each weight. Then it follows that $E$ is an elliptic curve from Hurwitz's formula.  

The cases for weight $(3,3,3)$, $(2,2,2,2)$ are easy: Indeed, $f$ is totally ramified at all the ramification points 
$P_1,\dots,P_n$ in these cases, so that $mP_i=f^*(\lambda_i) \sim f^*(\lambda_j)=mP_j$ for $1\le i,j \le n$. 
Hence $\mathcal{O}_E(P_i-P_n)^{\otimes m}\cong\mathcal{O}_E$ for all $i$, which means that $P_i$ is $m$-torsion.

As for weight $(2,4,4)$, we choose $\lambda_1=0$, $\lambda_2=1$, $\lambda_3=-1 \in \PP^1$ in an 
affine coordinate $u$ of $\PP^1$ and a $\Q$-divisor $B=-\frac12(0)+\frac14(1)+\frac14(-1) $ on $\PP^1$ 
so that $\pi^*B \sim -K_\mathfrak{C}$. We give an $\mathcal{O}_{\PP^1}$-algebra structure of 
$\pi_*\mathcal{A} \cong \bigoplus_{\ell=0}^3 \mathcal{O}_{\PP^1}(\lfloor\ell{B}\rfloor)z^{\ell}$ 
by the isomorphism $\mathcal{O}_{\PP^1}(4B)z^4 \cong \mathcal{O}_{\PP^1}$ via which $\frac{u^2}{u^2-1}z^4$ corresponds 
to $1$. Then for an affine open subset $U=\Spec k[u]$ of $\PP^1$, 
$H^0(U,\pi_*\mathcal{A}) \cong k[u,uz,uz^2]/\langle u^2z^4-u^2+1\rangle$, 
and the $4$-fold covering $f$ locally looks like  
$$f \colon f^{-1}U \cong \Spec k[u,v,w]/\langle w^2-u^2+1,uw-v^2\rangle \to U=\Spec k[u].$$
It follows that $f$ has four ramification points $P_1,P_2,P_3,P_4$ with ramification indices 2, 2, 4, 4 whose 
affine coordinates with respect to $u,v,w$ are $(0,0,\sqrt{-1}),\,(0,0,-\sqrt{-1}),\,(1,0,0)$, $(-1,0,0)$,
respectively. Clearly $4P_3\sim4P_4$ and $P_3,P_4$ are $4$-torsion points with respect to the group 
law of $(E,P_4)$. On the other hand, choosing $\varphi=(w-\sqrt{-1})/(u+1)\in k(E)$, we see that 
$\mathrm{div}_E(\varphi)=4P_1-4P_4$, so that $4P_1\sim4P_4$. Similarly, $4P_2\sim4P_4$ and we 
see that $P_1,P_2$ are also $4$-torsion. Thus (3) and (4) are proved for weight $(2,4,4)$. 

The case for weight $(2,3,6)$ is proved similarly. 
We choose $\lambda_1=\infty$, $\lambda_2=0$, $\lambda_3=1 \in \PP^1$ in an affine coordinate $x$ in $\PP^1$ and a $\Q$-divisor $B=-\frac12(\infty)+\frac13(0)+\frac16(1)$ on $\PP^1$ so that $\pi^*B \sim -K_{\mathfrak{C}}$.  
We give an $\mathcal O_{\PP^1}$-algebra structure of 
$\pi_*\mathcal{A} \cong \bigoplus_{\ell=0}^5 \mathcal O_{\PP^1}(\lfloor\ell{B}\rfloor)z^{\ell}$ 
by the isomorphism $\mathcal O_{\PP^1}(6B)z^6 \cong \mathcal O_{\PP^1}$ via which $\dfrac{z^6}{x^2(x-1)}$ 
corresponds to $1$. Let $t=1/x$ and take affine open neighborhoods $U=\Spec k[x]$ of $0 \in \PP^1$ 
and $V=\Spec k[t]$ of $\infty \in \PP^1$, respectively. Then the ring $H^0(U,\pi_*\mathcal{A})$ (resp.\
$H^0(V,\pi_*\mathcal{A})$) is isomorphic to \\
$k[x,z,z^3/x]/\langle (z^3/x)^2-x+1\rangle 
                            \cong k[x,y,z]/\langle xy-z^3,y^2-x+1\rangle \cong k[y,z]/\langle y^3+y-z^3\rangle$
(resp.\
$k[t,tz,tz^2]/\langle t^3z^6+t-1\rangle 
                            \cong k[t,u,v]/\langle v^3+t-1,u^2-tv\rangle \cong k[u,v]/\langle u^2+v^4-v\rangle$). 
Thus the $6$-fold covering $f$ locally looks like 
$$f|_{f^{-1}U} \colon f^{-1}U \cong \Spec k[y,z]/\langle y^3+y-z^3\rangle \to U=\Spec k[y^2+1]$$
and
$$f|_{f^{-1}V} \colon f^{-1}V \cong \Spec k[u,v]/\langle u^2+v^4-v\rangle \to V=\Spec k[v^3-1],$$
respectively. We see that $f$ has six ramification points $P_1,\dots,P_6$, the $i$-th one $P_i$ of which 
is described as follows, where $\omega \in k$ denotes a primitive cube root of unity. 
\begin{itemize}
\item $i=1,2,3$: $P_i$ is a ramification point of index $2$ at $(u,v)=(0,\omega^i)$ over $\infty \in \PP^1$; 
\item $i=4,5$: $P_i$ is a ramification point of index $3$ at $(y,z)=(\sqrt{-1}^{2i+1},0)$ over $0 \in \PP^1$; 
\item $P_6$ is a ramification point of index 6  at $(y,z)=(0,0)$ ($(u,v)=(0,0)$) over $1 \in \PP^1$. 
\end{itemize}
Choosing rational function $\varphi_i \in k(E)$ given by 
$$\varphi_i=\frac{v-\omega^i}{v}\quad(i=1,2,3)\quad\text{ and }\quad
   \varphi_i=\frac{y-\sqrt{-1}^{2i+1}}{y}\quad(i=4,5),$$
we see that $\mathrm{div}_E(\varphi_i)=2P_i-2P_6$ for $i=1,2,3$ and $\mathrm{div}_E(\varphi_i)=3P_i-3P_6$ 
for $i=4,5$, respectively. Thus the points $P_1,\dots,P_6$ are $6$-torsion with respect to the group law of 
$(E,P_6)$. 

To prove (5) recall that the elliptic curve $E$ is ordinary if and only if the Frobenius 
$$F \colon H^1(E,\mathcal{O}_E) \to H^1(E,F_*\mathcal{O}_E) \cong H^1(E,\mathcal{O}_E)$$
is injective(cf.\ Proposition \ref{F-split}). Since $\varphi_*\mathcal{O}_E\cong\bigoplus_{\ell\in\Z_m}\omega_\mathfrak{C}^{ \otimes (-\ell)}$, 
this is equivalent to the injectivity of 
$F \colon H^1(\mathfrak{C},\omega_\mathfrak{C}^{ \otimes (-\ell)}) \to H^1(\mathfrak{C},\omega_\mathfrak{C}^{ \otimes (-\ell p)})$ for all $\ell \in \Z_m$. 
Thus $\mathfrak{C}$ is $F$-split if so is $E$, by Proposition \ref{F-split}. Conversely, if $\mathfrak{C}$ is $F$-split, then we 
must have $H^1(\mathfrak{C},\omega_\mathfrak{C}^p) \ne 0$. Since $\omega_\mathfrak{C}$ is an $m$-torsion line bundle, this implies that 
$p \equiv 1 (\mod m)$ and the Frobenius on $H^1(E,\mathcal{O}_E) \cong k$ is identified with 
$F \colon H^1(\mathfrak{C},\omega_\mathfrak{C}) \to H^1(\mathfrak{C},\omega_\mathfrak{C}^p)$. Therefore the $F$-splitting of $\mathfrak{C}$ implies that $E$ 
is ordinary.  
\end{proof}

\begin{remark}
Lemma \ref{r-cover} (5) is also verified with explicit computations of the induced Frobenius map 
$$
F \colon H^1(\mathfrak{C},\omega_\mathfrak{C}) \to H^1(\mathfrak{C},\omega_\mathfrak{C}^p)
$$ 
and Fedder's criterion \cite{F} applied to the defining equation of $E$. 
\end{remark}

Now we state the main result of this section. 

\begin{theorem}\label{genus1}
Let $\mathfrak{C}$ be a weighted projective line with $\delta_{\mathfrak{C}}=0$, and assume that the characteristic $p$ 
does not divide any weight $r_i$. Then $\mathfrak{C}$ does not have GFFRT.
\end{theorem}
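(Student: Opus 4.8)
The plan is to pull everything back along the \'etale cover $\varphi\colon E\to\mathfrak{C}$ produced by Lemma~\ref{r-cover} and reduce the structure of $F^e_{\mathfrak{C}*}\mathcal{O}_{\mathfrak{C}}$ to the classical description of Frobenius push-forwards on the elliptic curve $E$.

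First I would establish the base-change identity $\varphi^{*}\bigl(F^{e}_{\mathfrak{C}*}\mathcal{O}_{\mathfrak{C}}\bigr)\cong F^{e}_{E*}\mathcal{O}_{E}$ for all $e\ge 0$. Since $\varphi$ is \'etale, the Frobenius square attached to $\varphi$ is Cartesian, that is $E^{(1)}\cong\mathfrak{C}^{(1)}\times_{\mathfrak{C}}E$, by the computation carried out for \'etale morphisms in Section~\ref{frob}. As $F_{\mathfrak{C}}$ is flat and finite and $\varphi$ is flat, flat base change for quasi-coherent sheaves on Deligne--Mumford stacks gives $\varphi^{*}F_{\mathfrak{C}*}\mathcal{G}\cong F_{E*}\varphi^{*}\mathcal{G}$ for every coherent sheaf $\mathcal{G}$ on $\mathfrak{C}$; iterating $e$ times and taking $\mathcal{G}=\mathcal{O}_{\mathfrak{C}}$ (so that $\varphi^{*}\mathcal{G}=\mathcal{O}_{E}$) yields the identity.

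Next I would argue by contradiction. Suppose $\mathfrak{C}$ has GFFRT. Then, writing $\mathcal{S}$ for the (finite) set of isomorphism classes of indecomposable summands occurring in the bundles $F^{e}_{\mathfrak{C}*}\mathcal{O}_{\mathfrak{C}}$, the Krull--Schmidt theorem on $\mathfrak{C}$ shows that each $F^{e}_{\mathfrak{C}*}\mathcal{O}_{\mathfrak{C}}$ is a finite direct sum of members of $\mathcal{S}$. Applying $\varphi^{*}$ and the identity above, each $F^{e}_{E*}\mathcal{O}_{E}$ is a finite direct sum of the finitely many bundles $\varphi^{*}\mathcal{G}$, $\mathcal{G}\in\mathcal{S}$; decomposing each $\varphi^{*}\mathcal{G}$ into indecomposables on $E$ and using Krull--Schmidt on $E$, I conclude that the set of isomorphism classes of indecomposable summands of $\{F^{e}_{E*}\mathcal{O}_{E}\}_{e\ge 0}$ is finite, and in particular their ranks are bounded. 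This contradicts the known structure of $F^{e}_{*}\mathcal{O}_{E}$ on an elliptic curve (\cite{A}, \cite{Od}): if $E$ is supersingular then $F^{e}_{E*}\mathcal{O}_{E}$ is indecomposable of rank $p^{e}$, so the ranks are unbounded; if $E$ is ordinary then $F^{e}_{E*}\mathcal{O}_{E}$ is the direct sum of the $p^{e}$ pairwise non-isomorphic line bundles $M$ with $M^{\otimes p^{e}}\cong\mathcal{O}_{E}$, i.e. of $\mathrm{Pic}^{0}(E)[p^{e}]$, and $\bigcup_{e}\mathrm{Pic}^{0}(E)[p^{e}]=\mathrm{Pic}^{0}(E)[p^{\infty}]$ is infinite, so infinitely many non-isomorphic summands appear. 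Either way we reach a contradiction, so $\mathfrak{C}$ does not have GFFRT.

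I expect the main obstacle to be the first step: one must check carefully that the Frobenius square of the \emph{stacky} \'etale cover $\varphi$ is genuinely Cartesian and that flat base change is available for quasi-coherent sheaves on the Deligne--Mumford stack $\mathfrak{C}$ --- this is exactly the \'etale case discussed in Section~\ref{frob}. Once that is in hand, the rest is a formal Krull--Schmidt argument fed by the classical description of $F^{e}_{*}\mathcal{O}_{E}$; note that the ordinary case is consistent with Lemma~\ref{r-cover}(5) (there $p\equiv 1\pmod m$), though the argument does not use this.
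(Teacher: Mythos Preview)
Your argument is correct and genuinely different from the paper's. Both proofs start from the \'etale cover $\varphi\colon E\to\mathfrak{C}$ of Lemma~\ref{r-cover} and split into the ordinary and supersingular cases, but they use $\varphi$ in opposite directions. You \emph{pull back} along $\varphi$: the base-change identity $\varphi^{*}F^{e}_{\mathfrak{C}*}\mathcal{O}_{\mathfrak{C}}\cong F^{e}_{E*}\mathcal{O}_{E}$ (valid because $\varphi$ is \'etale, so the Frobenius square is Cartesian as in Section~\ref{frob}, and $\varphi$ is flat) immediately reduces GFFRT on $\mathfrak{C}$ to the classical structure of $F^{e}_{*}\mathcal{O}_{E}$, and a Krull--Schmidt argument finishes. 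The paper instead works via \emph{push-forward}: it compares the two decompositions of $\varphi_{*}F^{e}_{*}\mathcal{O}_{E}\cong F^{e}_{*}\varphi_{*}\mathcal{O}_{E}$ and, with substantially more effort, actually identifies $F^{e}_{*}\mathcal{O}_{\mathfrak{C}}$ itself---as the indecomposable bundle $\mathcal{G}_{q}$ in the supersingular case (Propositions~\ref{atiyah} and~\ref{2mod3}) and as $\mathcal{O}_{\mathfrak{C}}\oplus\varphi_{*}L_{1}\oplus\cdots\oplus\varphi_{*}L_{r}$ in the ordinary case (Proposition~\ref{1mod3}, proved via Lemma~\ref{Oda}). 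Your route is shorter and entirely sufficient for the not-GFFRT conclusion; the paper's route buys the explicit indecomposable decomposition of $F^{e}_{*}\mathcal{O}_{\mathfrak{C}}$ on $\mathfrak{C}$, which is of independent interest but not needed for Theorem~\ref{genus1} per se.
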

\begin{proof}
Let $f \colon E \to \PP^1$ be the $m$-fold covering from an elliptic curve constructed in Lemma 
\ref{r-cover} and let $\varphi \colon E \to \mathfrak{C}$ be the induced morphism. We divide the proof into 
two cases, according to whether $E$ is ordinary or supersingular. First we recall the following:

\begin{lemma}[\cite{A}, {\cite[Lemma 4.12]{HSY}}]\label{elliptic}
Let $E$ be an elliptic curve in characteristic $p$ and let $q=p^e$ for $e\ge 0$. 
\begin{enumerate}
\item[$(1)$] If $E$ is ordinary, then $F^e_*\mathcal{O}_E$ splits into $q$ distinct $q$-torsion line bundles. 
\item[$(2)$] If $E$ is supersingular, then $F^e_*\mathcal{O}_E$ is isomorphic to Atiyah's vector bundle $\F_q$ of 
rank $q$ (see subsection 5.1 below).
\end{enumerate}
\end{lemma}


\subsection{Supersingular case}

On the elliptic curve $E$ (which we do not yet assume to be supersingular), we have indecomposable 
vector bundles $\F_r$ of rank $r$ and degree $0$ such that $H^{0}(E, \F_{r}) \cong k$ for all integer 
$r > 0$. This bundle is determined inductively by $\F_1=\mathcal{O}_E$ and a unique non-trivial extension 
\begin{eqnarray}\label{ex0}
0 \to \mathcal{O}_E \to \F_r \to \F_{r-1} \to 0
\end{eqnarray}
as in \cite[Theorem 5]{A}.
In what follows, we construct inductively vector bundles $\G_r$ on $\mathfrak{C}$ of rank $r=1,2,\dots$ such that 
\begin{eqnarray}
\label{ext}
\Ext_{\mathfrak{C}}^1(\G_r,\omega_\mathfrak{C}^i) \cong 
\left\{ \begin{array}{cc}
k& \mathop{\mathrm{ if }}\nolimits i \equiv 1 \pmod{m}\\
0& \mathop{\mathrm{otherwise.}}\nolimits
\end{array}
\right.
\end{eqnarray}

We put $\G_{1}= \mathcal{O}_{\mathfrak{C}}$. 
Then we can easily verify condition \eqref{ext} for $r=1$ by computing  
$\Ext_{\mathfrak{C}}^1(\G_1,\omega_\mathfrak{C}^i) \cong H^1(\mathfrak{C},\omega_\mathfrak{C}^i)$ with Lemma \ref{cadman} (4) and Lemma \ref{dualizing}.  

Now let $r\ge 2$ and assume condition \eqref{ext} for $r-1$. Since $\Ext^1(\G_{r-1},\omega_\mathfrak{C}) \cong k$, 
we have a vector bundle $\G_r$ sitting in a unique non-trivial extension 
\begin{eqnarray}
\label{ex}
0 \to \mathcal{O}_\mathfrak{C} \to \G_r \to \G_{r-1} \otimes \omega_\mathfrak{C}^{-1} \to 0.
\end{eqnarray}
We apply the functor $\Ext(-,\omega_\mathfrak{C}^i)$ to this exact sequence to verify condition \eqref{ext}. 
For $i=0$ we have an exact sequence 
$$\Hom(\mathcal{O}_\mathfrak{C},\mathcal{O}_\mathfrak{C}) \stackrel{\delta}{\lra} 
   \Ext^1(\G_{r-1},\omega_\mathfrak{C}) \to \Ext^1(\G_r,\mathcal{O}_\mathfrak{C}) \to \Ext^1(\mathcal{O}_\mathfrak{C},\mathcal{O}_\mathfrak{C})=0,
$$
where the connecting homomorphism $\delta$ is an isomorphism by the non-triviality of the 
extension (\ref{ex}). Thus we have $\Ext^1(\G_r,\mathcal{O}_\mathfrak{C})=0$. 
For $i \not\equiv 0$ (mod $m$), 
we have
$$0=\Ext^1(\G_{r-1},\omega_\mathfrak{C}^{i+1}) \to \Ext^1(\G_r,\omega_\mathfrak{C}^i) \to \Ext^1(\mathcal{O}_\mathfrak{C},\omega_\mathfrak{C}^i) \to 0
$$
by induction, so that $\Ext_\mathfrak{C}^1(\G_r,\omega_\mathfrak{C}^i) \cong \Ext^1(\mathcal{O}_\mathfrak{C},\omega_\mathfrak{C}^i)$. 
Thus condition \eqref{ext} holds for $r$. 

\begin{proposition}\label{atiyah}
Suppose that $m$ is not divisible by $p$. Then $\varphi^*\G_r \cong \F_r$. In particular, $\G_r$ is indecomposable. 
\end{proposition}

\begin{proof}
The assertion is clear if $r=1$. Let $r \ge 2$ and let the exact sequence (\ref{ex}) be given by a non-zero 
extension class $\varepsilon \in \Ext^1(\G_{r-1},\omega_\mathfrak{C}) \cong H^1(\mathfrak{C},\G_{r-1}^{\vee}(K_\mathfrak{C}))$. Since 
$\varphi^*\omega_\mathfrak{C} \cong \mathcal{O}_E$ (note that $\varphi$ is \'etale) and $\varphi^*\G_{r-1} \cong \F_{r-1}$ 
by induction, the pull-back of sequence (\ref{ex}) under $\varphi$ turns out to be 
\begin{eqnarray}\label{ex2}
0 \to \mathcal{O}_E \to \varphi^*\G_r \to \F_{r-1} \to 0.
\end{eqnarray}
This extension is given by the image $\varphi^*\varepsilon$ of $\varepsilon$ under the natural map 
$$\varphi^* \colon 
\Ext^1(\G_{r-1},\omega_\mathfrak{C}) \to \Ext^1(\varphi^*\G_{r-1},\varphi^*\omega_\mathfrak{C}) \cong \Ext^1(\F_{r-1},\mathcal{O}_E).
$$ 
This map is injective, since it is identified with the map 
$$H^1(\mathfrak{C},\G_{r-1}^{\vee}(K_\mathfrak{C})) \to H^1(\mathfrak{C},\G_{r-1}^{\vee}(K_\mathfrak{C})\otimes \varphi_*\mathcal{O}_E)) 
                                         \cong H^1(E,\varphi^*(\G_{r-1}^{\vee}(K_\mathfrak{C})))$$
induced by the splitting map $\mathcal{O}_\mathfrak{C} \to \varphi_*\mathcal{O}_E \,(= \bigoplus_{l=0}^{m-1} \omega_\mathfrak{C}^{-l})$. Thus 
$\varphi^*\varepsilon\ne0$ and it sits in $\Ext^1(\F_{r-1},\mathcal{O}_E) \cong k$. Comparing extensions 
\eqref{ex0} and \eqref{ex2} we see that $\F_r \cong \varphi^*\G_r$, as required.
\end{proof}

We now consider the case where $\mathfrak{C}$ is not $F$-split, or equivalently, $E$ is supersingular.

\begin{proposition}\label{2mod3}
Under the hypothesis of Proposition \ref{atiyah}, assume further that $E$ is supersingular. 
Then we have $F_{\ast}^{e} \mathcal{O}_{\mathfrak{C}} \cong \G_{q}$, where $q=p^{e}$.
\end{proposition}

\begin{proof}
Recall that $\mathcal{O}_\mathfrak{C}$ is a direct summand of 
$\varphi_*\mathcal{O}_E  \,(= \bigoplus_{l=0}^{m-1} \omega_\mathfrak{C}^{-l})$. Then $F^e_*\mathcal{O}_\mathfrak{C}$ is 
a direct summand of $F^e_*\varphi_*\mathcal{O}_E =\varphi_*F^e_*\mathcal{O}_E$. On the other hand, it follows from 
Proposition \ref{atiyah} and Lemma \ref{elliptic} that $\G_q$ is a direct summand of 
$\G_q\otimes \varphi_*\mathcal{O}_E \cong \varphi_*\varphi^*\G_q \cong \varphi_*\F_q \cong \varphi_*F^e_*\mathcal{O}_E$. 
Thus, both $\G_q$ and $F^e_*\mathcal{O}_\mathfrak{C}$ are direct summands of $\varphi_*F^e_*\mathcal{O}_E$, and 
$h^0(\G_q)=h^0(F^e_*\mathcal{O}_\mathfrak{C})=h^0(\varphi_*F^e_*\mathcal{O}_E)=1$. It then follows from the 
indecomposability of $\G_q$ that it is a direct summand of $F^e_*\mathcal{O}_\mathfrak{C}$.  
However, since $\G_q$ and $F^e_*\mathcal{O}_\mathfrak{C}$ have the same rank $q=p^e$, we conclude that 
$\G_q \cong F^e_*\mathcal{O}_\mathfrak{C}$.
\end{proof}

It immediately follows from the proposition that $\mathfrak{C}$ is not GFFRT in the supersingular case. 


\subsection{Ordinary case.}
We now consider the case where the weighted projective line $\mathfrak{C}$ with $\delta_{\mathfrak{C}}=0$ is $F$-split. In 
this case, $p \equiv 1$ (mod $m$) and we have an $m$-fold covering $f \colon E \to \PP^1$ from 
an ordinary elliptic curve $E$. 
Recall that $f$ factors as
$$f \colon E \stackrel{\varphi}{\lra} \mathfrak{C} \stackrel{\pi}{\lra} \PP^1,$$
where $\varphi \colon E \to \mathfrak{C}$ is unramified and $\pi \colon \mathfrak{C} \to \PP^1$ is the coarse moduli map. 
There is a ramification point $P_0 \in E$ of $f$ with ramification index $m$. We choose $P_0$ 
as the identity point for the group structure of $E$. 
Since $E$ is an ordinary elliptic curve, for any $q=p^e$ there exists exactly $q$ distinct $q$-torsion 
points $P_0,P_{1/q},\dots,P_{(q-1)/q} \in E$, among which $P_{1/q},\dots,P_{(q-1)/q}$ are not ramification 
points of $f$ by Lemma \ref{r-cover} (3). By Lemma \ref{elliptic} the $e$-th Frobenius push-forward 
$F^e_*\mathcal{O}_E$ on $E$ splits into $q$ non-isomorphic $q$-torsion line bundles $L_i=\mathcal{O}_E(P_{i/q}-P_0)$ 
with $i=0,1,\dots,q-1$. Thus we have the following decomposition 
\begin{eqnarray}\label{ordinary1}
\varphi_*F^e_*\mathcal{O}_E \cong \varphi_*\mathcal{O}_E \oplus \varphi_*L_1 \oplus\cdots\oplus \varphi_*L_{q-1}
\end{eqnarray}
into rank $m$ bundles $\varphi_*L_i$. On the other hand, we have 
$\varphi_*\mathcal{O}_E \cong \mathcal{O}_\mathfrak{C} \oplus \omega_\mathfrak{C}^{-1} \oplus\cdots\oplus \omega_\mathfrak{C}^{1-m}$ by Lemma 
\ref{r-cover}, so that 
\begin{eqnarray}\label{ordinary2}
\varphi_*F^e_*\mathcal{O}_E \cong 
                F^e_*\mathcal{O}_\mathfrak{C} \oplus F^e_*(\omega_\mathfrak{C}^{-1}) \oplus\cdots\oplus F^e_*(\omega_\mathfrak{C}^{1-m}).
\end{eqnarray}

The group $G=\mu_m$ is the Galois group of the $m$-fold Galois covering $f \colon E \to \PP^1$. If 
we define the equivalence $\sim$ by $L_i \sim L_j$ if and only if $L_i\cong\sigma^*L_j$ for some $\sigma\in\mu_m$, 
then the line bundles $\mathcal{O}_E=L_0,L_1,\dots,L_{q-1}$ are divided into $r+1$ equivalence classes, where 
$r=\frac{q-1}{m}$. Re-numbering the line bundles, we may and will assume that 
the complete representatives are $\mathcal{O}_E=L_0,L_1,\dots,L_r$. Under this notation we have the following:

\begin{proposition}\label{1mod3}
Let the notation be as above. Then $\varphi_*L_i$ is an indecomposable bundle for $1\le i\le q-1$, 
and $\varphi_*L_i \cong \varphi_*L_j$ if and only if $L_i \cong \sigma^*L_j$ for some $\sigma\in\mu_m$. 
We then have a decomposition 
$$F^e_*(\omega_\mathfrak{C}^i) \cong \omega_\mathfrak{C}^i \oplus \varphi_*L_1 \oplus\cdots\oplus \varphi_*L_r$$
into $r+1$ non-isomorphic indecomposable bundles for $i \in \Z/m\Z$. 
\end{proposition}


We need the following lemma to prove the proposition above. 

\begin{lemma}\label{Oda}
Let $L,M$ be $q$-torsion line bundles on $E$ with $L$ non-trivial. Then 
$$\Hom_{\mathcal{O}_\mathfrak{C}}(\varphi_*L,\varphi_*M) \cong 
   \left\{\begin{array}{ll} k & \mathop{\mathrm{if }}\nolimits \sigma^*L \cong M \mathop{\mathrm{ for\; some }}\nolimits \sigma\in \mu_m \\
                                0 & \mathop{\mathrm{otherwise}}\nolimits. \end{array}\right.
$$
\end{lemma}

\begin{proof}
The proof goes along the same line as Oda's \cite[Section 1, p.\ 43--47]{Od}. First, we have the 
Cartesian diagram


$$
\xymatrix{
\ar[d]_{p_{1}}   E \times \mu_m \ar[r]^{\mu} &  E \ar[d]^{\varphi}\\
   E \ar[r]^{\varphi} & \mathfrak{C} 
}
$$
\noindent 
where $\mu\colon E\times\mu_m \to E$ is the map induced by the action of 
$\mu_m=\Spec k [\xi]/\langle\xi^m-1\rangle$ on $E$, and $p_1$ is the projection. 
This follows from the fact that $[E/\mu_m] \cong \mathfrak{C}$ via $\varphi$ (see Lemma \ref{r-cover} (1)) and 
\cite[(7. 21)]{V}. Since $G=\mu_m$ is finite, $\varphi$ is 
affine, so that $\varphi^*\varphi_*L \cong p_{1*}\mu^*L$. Hence by the adjointness of $\varphi^*$ 
and $\varphi_*$ we obtain 
\begin{eqnarray*}
\Hom_{\mathcal{O}_\mathfrak{C}}(\varphi_*L,\varphi_*M) 
& \cong & \Hom_{\mathcal{O}_E}(\varphi^*\varphi_*L,M) \\
& \cong & \Hom_{\mathcal{O}_E}(p_{1*}\mu^*L,M) \cong H^0(E,(p_{1*}\mu^*(L)\otimes M^{-1})^{\vee}).
\end{eqnarray*}
By the Serre duality this is dual to
$$
H^1(E,p_{1*}\mu^*(L)\otimes M^{-1}) \cong H^1(E,p_{1*}(\mu^*L\otimes p_1^*M^{-1}))
                                                  \cong H^1(E\times G,\mu^*L\otimes p_1^*M^{-1}).
$$
Now let $\lambda \colon G \to \hat{E}=\mathrm{Pic}^{\circ}(E)$ be the morphism sending 
$\sigma \in G$ to $\sigma^*(L)\otimes L^{-1}$, which is injective by our assumption that $L$ is a 
non-trivial $q$-torsion line bundle (cf.\ Lemma \ref{r-cover} (3)). 
Then the restriction of the line bundle 
$\mu^*L\otimes p_1^*L^{-1}$ on $E\times G$ to $\{P_0\}\times G$ is trivial and its restriction to 
$E\times\{\sigma\}$ is $\lambda(\sigma)=\sigma^*(L)\otimes L^{-1}$ for every $\sigma\in G$. 
Hence we have 
$$\mu^*L\otimes_{\mathcal{O}_{E\times G}}p_1^*L^{-1} \cong (1\times \lambda)^*\mathcal{P},$$
by \cite[III.13, p.\ 125, Theorem]{M}, where $\mathcal P$ is the normalized Poincar\'e line bundle 
on $E\times \hat{E}$. 

Thus $\Hom_{\mathcal{O}_\mathfrak{C}}(\varphi_*L,\varphi_*M)$ is dual to 
$$
H^1(E\times G,(1\times\lambda)^*\mathcal{P} \otimes_{\mathcal{O}_{E\times G}}p_1^*(L\otimes M^{-1})) \cong 
H^1(E\times G,(1\times\lambda)^*(\mathcal{P} \otimes_{\mathcal{O}_{E\times\hat{E}}}p_1^*(L\otimes M^{-1}))),
$$
where we abuse the notation $p_1$ to denote the first projection from both $E\times G$ and 
$E\times\hat{E}$. It follows from the Leray spectral sequence 
$H^i(G,R^jp_{2*}(1\times\lambda)^*(\mathcal{P}\otimes p_1^*(L\otimes M^{-1}))) \Rightarrow
  H^{i+j}(E\times G,(1\times\lambda)^*(\mathcal{P} \otimes p_1^*(L\otimes M^{-1})))$
that this is isomorphic to 
$$H^0(G,R^1p_{2*}(1\times\lambda)^*(\mathcal{P}\otimes p_1^*(L\otimes M^{-1}))).$$
Furthermore, we have $R^1p_{2*}(1\times\lambda)^*(\mathcal{P}\otimes p_1^*(L\otimes M^{-1})) 
\cong \lambda^*R^1p_{2*}(\mathcal{P}\otimes p_1^*(L\otimes M^{-1}))$. To see this let 
$\F= \mathcal{P}\otimes p_1^*(L\otimes M^{-1})$. Since the problem is local on $\hat{E}$, we may 
replace $\lambda \colon G \to \hat{E}$ by $\lambda \colon \Spec B \to \Spec A$ to show that 
$H^1(E_B,\F\otimes_AB) \cong H^1(E_A,\F)\otimes_AB$, where $E_A=E\times\Spec A$ and 
$E_B=E\times\Spec B$, respectively. Since $\dim E=1$ we have an open covering of $E$ consisting 
of two affine open subsets $V_1,V_2$. Then $E_A$ is covered by $U_i=V_i\times\Spec A$ with $i=1,2$ 
and $H^1(E_A,\F)$ is computed with the \v{C}ech complex 
$\mathcal{E}^{\bullet}=[0 \to \mathcal{E}^0 \to \mathcal{E}^1 \to 0]$ 
associated with $\{U_1,U_2\}$ and $\F$, i.e., there is an exact sequence 
$$\mathcal{E}^0 \to \mathcal{E}^1 \to H^1(E_A,\F) \to 0.$$
Similarly, $H^1(E_B,\F\otimes_AB)$ is computed with the \v{C}ech complex 
$\mathcal{E}^{\bullet}\otimes_AB$, i.e., 
$$\mathcal{E}^0\otimes_AB \to \mathcal{E}^1\otimes_AB \to H^1(E_B,\F\otimes_AB) \to 0$$ 
is exact. Thus the right exactness of the functor $-\otimes_AB$ leads us to the conclusion. 

Thus we see that $\Hom_{\mathcal{O}_\mathfrak{C}}(\varphi_*L,\varphi_*M)$ is dual, as a $k$-vector space, to 
$$H^0(G,\lambda^*R^1p_{2*}(\mathcal{P}\otimes p_1^*(L\otimes M^{-1}))).$$
Let $b \in \hat{E}$ be the point representing the class of $L\otimes M^{-1}$ and let 
$T_b \colon \hat{E} \to \hat{E}$ be the translation by $b$.  Then we have 
$\mathcal{P}\otimes p_1^*(L\times M^{-1}) \cong (1\times T_b)^*\mathcal{P}$ again by \cite[ibid]{M}. 
Therefore $\Hom_{\mathcal{O}_\mathfrak{C}}(\varphi_*L,\varphi_*M)$ is dual to 
$$
H^0(G,\lambda^*R^1p_{2*}(1\times T_b)^*\mathcal{P}) 
         \cong H^0(G,(T_b\circ\lambda)^*R^1p_{2*}\mathcal{P}).
$$
Since $R^1p_{2*}\mathcal{P}$ is supported at the origin $0 \in \hat{E}$ with 
$R^1p_{2*}(\mathcal{P})_0=k$ (\cite{M}, \cite[Lemma 1.1]{Od}) 
and since $T_b\circ\lambda$ is injective, $\Hom_{\mathcal{O}_\mathfrak{C}}(\varphi_*L,\varphi_*M)$ is 
one-dimensional if $T_b\circ\lambda(G)$ contains the origin $0$ of $\hat{E}$, and otherwise it 
is zero. Finally it is easy to see that $0 \in T_b\circ\lambda(G)$ if and only if $\sigma^*L \cong M$ 
for some $\sigma \in G$. 
\end{proof}

\begin{proof}[Proof of Proposition \ref{1mod3}]
By (\ref{ordinary1}) and Lemma \ref{Oda}, 
$$\varphi_*F^e_*\mathcal{O}_E \cong 
\bigoplus_{i=0}^{m-1} \omega_{\mathfrak C}^{i} \oplus \varphi_*L_1 \oplus\cdots\oplus \varphi_*L_{q-1}
$$
gives the splitting of $\varphi_*F^e_*\mathcal{O}_E$ into indecomposable bundles. 
On the other hand, since $\mathfrak C$ is $F$-split and $q=p^e \equiv 1 \pmod m$, 
$\omega_{\mathfrak C}^i$ is isomorphic to a direct summand of 
$\omega_{\mathfrak C}^i \otimes F^e_*\mathcal{O}_{\mathfrak C} 
\cong F^e_*(\omega_{\mathfrak C}^{iq}) \cong F^e_*(\omega_{\mathfrak C}^{i})$ for all integers $i$. 
Thus, comparing the splitting above with (\ref{ordinary2}), we see that $F^e_*(\omega_{\mathfrak C}^i)$ 
decomposes as 
\begin{eqnarray}\label{ordinary3}
F^e_*(\omega_{\mathfrak C}^i) \cong 
   \omega_{\mathfrak C}^i \oplus \varphi_*L_{i_1} \oplus\cdots\oplus \varphi_*L_{i_r}
\end{eqnarray}
for some $i_1,\dots,i_r$ with $1 \le i_1<\cdots<i_r \le q-1$. 
On the other hand, we have 
$$\mathcal{H}om(F^e_*(\omega_{\mathfrak C}^j),\mathcal{O}_{\mathfrak C}) 
\cong \mathcal{H}om(F^e_*(\omega_{\mathfrak C}^j),\omega_{\mathfrak C})\otimes\omega_{\mathfrak C}^{-1}
\cong F^e_*(\omega_{\mathfrak C}\otimes\omega_{\mathfrak C}^{-j})\otimes\omega_{\mathfrak C}^{-1}
\cong F^e_*(\omega_{\mathfrak C}^{-j})$$
for $j \in \mathbb{Z}$ by the adjunction formula \eqref{adjunction}. 
Hence 
$$
\mathcal{H}om(F^e_*(\omega_{\mathfrak C}^j),F^e_*(\omega_{\mathfrak C}^i))
   \cong F^e_*(\omega_{\mathfrak C}^{-j})\otimes F^e_*(\omega_{\mathfrak C}^i)
     \cong F^e_*(\omega_{\mathfrak C}^{-j}\otimes F^{e*}F^e_*(\omega_{\mathfrak C}^i)).
$$
Here 
$F^{e*}F^e_*(\omega_{\mathfrak C}^i) \cong 
F^{e*}(\omega_{\mathfrak C}^{i}) \oplus F^{e*}\varphi_*L_{i_1} \oplus\cdots\oplus F^{e*}\varphi_*L_{i_r}$ 
by (\ref{ordinary3}) and 
$F^{e*}(\omega_{\mathfrak C}^{i}) \cong \omega_{\mathfrak C}^{iq} \cong \omega_{\mathfrak C}^{i}$. 
Also, since  $\mathfrak{C}$ is a quotient stack of $E$ by $\mu_{m}$ with $p \equiv 1$ (mod $m$), the diagram \eqref{stackdiagram} for $\varphi \colon E \to \mathfrak{C}$ is the pull-back diagram by the Frobenius morphism $F$ as in Example \ref{quot}.
Since $F \colon \mathfrak{C} \to \mathfrak{C}$ is flat as we explained in Section \ref{frob}, we have $F^{e*}\varphi_*L_{i_k} \cong \varphi_*F^{e*}L_{i_k} \cong \varphi_*\mathcal{O}_E$ for each $k=1,\dots,r$. 
Thus 
$$\mathcal{H}om(F^e_*(\omega_{\mathfrak C}^j),F^e_*(\omega_{\mathfrak C}^i)) 
   \cong F^e_*(\omega_{\mathfrak C}^{i-j}) \oplus 
            F^e_*(\omega_{\mathfrak C}^{-j}\otimes\varphi_*\mathcal{O}_E)^{\oplus r}
   \cong F^e_*(\omega_{\mathfrak C}^{i-j}) \oplus F^e_*\varphi_*(\mathcal{O}_E)^{\oplus r}.
$$
It follows that the dimension of the $k$-vector space 
$\Hom(F^e_*(\omega_{\mathfrak C}^j),F^e_*(\omega_{\mathfrak C}^i))$ is equal to $r+1$ if 
$i \equiv j \pmod m$; and $r$ otherwise. This implies that the $\varphi_*L_{i_k}$'s appearing in 
decomposition (\ref{ordinary3}) are non-isomorphic to each other. The conclusion follows from 
Lemma \ref{Oda}. 
\end{proof}

Theorem \ref{genus1} now follows from Propositions \ref{2mod3} and \ref{1mod3}. 
\end{proof}

\begin{remark}
Theorem \ref{genus1} fails without the assumption that the weights are not divisible by $p$ (see 
Section 7). 
\end{remark}


\section{Stability of Frobenius push-forwards: The case where $\delta_{\mathfrak{C}} > 0$}

In this section, we assume that $\mathfrak{C}=C[\sqrt[r_{1}]{P_{1}}, \ldots, \sqrt[r_{n}]{P_{n}}]$ for a smooth projective curve $C$, 
and that $p$ does not divide any weight $r_{i}$. 
Then $\mathfrak{C}$ is Deligne-Mumford by Lemma \ref{cadman} (5), and we can define differential maps $d \colon \mathcal{O}_{\mathfrak{C}} \to \omega_{\mathfrak{C}}$ on $\mathfrak{C}$.
We put $\delta_{\mathfrak{C}} = n- \sum(1/r_i) + \deg \omega_C$ for a general smooth curve $C$.
Our goal is to  show the slope stability of Frobenius push-forward 
of line bundles on $\mathfrak{C}$ with $\delta_{\mathfrak{C}} >0$.  

As a corollary, we show that orbifold curves $\mathfrak{C}$ with $\delta_{\mathfrak{C}} > 0$ do not 
have GFFRT. This gives a negative answer to Brenner's question \cite[Question 2]{Sh} in characteristic 
$p \neq 2, 3, 7$ (see Introduction and Section 7).


\subsection{First Chern class of $F_{\ast}^{e} \mathcal{O}_{\mathfrak{C}}$}

To study slope stability of $F_{\ast}^{e} \mathcal{O}_{\mathfrak{C}}$, we compute the degree of $c_{1} (F_{\ast} \mathcal{O}_{\mathfrak{C}})$.
To this end, recall that we have 
\begin{eqnarray*}
\label{duality}
\mathcal{H} om_{\mathfrak{C}} (F_{\ast} \mathcal{O}_{\mathfrak{C}}, \omega_{\mathfrak{C}}) \cong F_{\ast} \omega_{\mathfrak{C}} 
\end{eqnarray*}
by \eqref{adjunction}.
We also consider the Frobenius push-forward of the differential map $F_{\ast}(d) \colon F_{\ast} \mathcal{O}_{\mathfrak{C}} \to F_{\ast} \omega_{\mathfrak{C}}$, which is $\mathcal{O}_{\mathfrak{C}}$-linear.
We write by $\B$ its image in $F_{\ast} \omega_{\mathfrak{C}}$.
We have a homomorphism $C^{-1} \colon \omega_{\mathfrak{C}} \to F_{\ast} \omega_{\mathfrak{C}} / \B$ from the similar arguments as in \cite[9.14]{EV}.
Since the characteristic $p$ does not divide any weight $r_{i}$, this is an isomorphism.
The inverse $C \colon F_{\ast} \omega_{\mathfrak{C}}/ \B \cong \omega_{\mathfrak{C}}$ is called the \emph{Cartier operator}.

We have exact sequences
\begin{eqnarray*}
0 \to \mathcal{O}_{\mathfrak{C}} \stackrel{F}{\to} F_{\ast} \mathcal{O}_{\mathfrak{C}} \to \B \to 0  \\
0 \to \B \to F_{\ast} \omega_{\mathfrak{C}} \stackrel{C}{\to} \omega_{\mathfrak{C}} \to 0. 
\end{eqnarray*}
By these facts, we have
$$\det F_{\ast} \mathcal{O}_{\mathfrak{C}} \cong \det \B \cong \det F_{\ast}(\omega_{\mathfrak{C}}) \otimes \omega_{\mathfrak{C}} ^{-1} 
                               \cong \omega_{\mathfrak{C}}^{p} \otimes (\det F_{\ast} \mathcal{O}_{\mathfrak{C}})^{-1} \otimes \omega_{\mathfrak{C}} ^{-1}. 
$$
Hence we have
\begin{eqnarray}
\label{chern1}
c_{1}(F_{\ast} \mathcal{O}_{\mathfrak{C}}) = \frac{p-1}{2} K_{\mathfrak{C}}
\end{eqnarray}
in the Chow ring $A^{\bullet}(\mathfrak{C})$ of $\mathfrak{C}$.

\begin{proposition}
\label{chern}
For any vector bundle $\E$ of rank $r$ on $\mathfrak{C}$, we have
$$
c_{1}(F_{\ast} \E) = \frac{p-1}{2} r K_{\mathfrak{C}} + c_{1}(\E)
$$
in $A^{\bullet}(\mathfrak{C})$.

\end{proposition}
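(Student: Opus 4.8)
The plan is to reduce, in two steps, to the case $\E=\mathcal O_{\mathfrak C}$ already settled in \eqref{chern1}; the bridge between the two will be the fact that $c_1(F_*T)=c_1(T)$ for every torsion sheaf $T$ on $\mathfrak C$.

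First I would reduce to line bundles. Since $\mathfrak C$ is a smooth Deligne--Mumford curve, every vector bundle $\E$ of rank $r$ admits a filtration $0=\E_0\subset\E_1\subset\cdots\subset\E_r=\E$ with line-bundle successive quotients $L_j=\E_j/\E_{j-1}$: choose a nonzero map into $\E$ from a sufficiently negative line bundle, replace it by its saturation, and induct, using that a torsion-free coherent sheaf on a smooth Deligne--Mumford curve is locally free. As $F\colon\mathfrak C\to\mathfrak C$ is finite, $F_*$ is exact, so $F_*\E$ inherits a filtration with quotients $F_*L_j$; hence $c_1(F_*\E)=\sum_j c_1(F_*L_j)$, while $c_1(\E)=\sum_j c_1(L_j)$ and $r=\sum_j1$. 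Thus it suffices to prove $c_1(F_*L)=\frac{p-1}{2}K_{\mathfrak C}+c_1(L)$ for an arbitrary line bundle $L$.

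Next I would treat line bundles, assuming the torsion claim. Pick a nonzero effective (generalized) Cartier divisor $D$ on $\mathfrak C$ with $H^0(\mathfrak C,L(D))\neq0$ — possible since a large enough multiple of an ample class makes any line bundle effective — and let $D'$ be the zero locus of a section, so that $L(D)\cong\mathcal O_{\mathfrak C}(D')$ with $D'$ effective. Pushing the two exact sequences
$$0\to L\to L(D)\to L(D)|_{D}\to0,\qquad 0\to\mathcal O_{\mathfrak C}\to\mathcal O_{\mathfrak C}(D')\to\mathcal O_{\mathfrak C}(D')|_{D'}\to0$$
forward under $F$, taking first Chern classes, and subtracting (using $F_*L(D)\cong F_*\mathcal O_{\mathfrak C}(D')$), I would obtain
$$c_1(F_*L)-c_1(F_*\mathcal O_{\mathfrak C})=c_1\bigl(F_*(\mathcal O_{\mathfrak C}(D')|_{D'})\bigr)-c_1\bigl(F_*(L(D)|_{D})\bigr).$$
Granting $c_1(F_*T)=c_1(T)$ for torsion $T$, the right-hand side equals $c_1(\mathcal O(D')|_{D'})-c_1(L(D)|_{D})=[D']-[D]=c_1(L(D))-[D]=c_1(L)$, and \eqref{chern1} finishes this step.

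Finally, the torsion claim is where I expect the only real work, and where the hypothesis $p\nmid r_i$ enters (beyond guaranteeing that $\mathfrak C$ is Deligne--Mumford). Since $F$ is finite it carries the finite support of $T$ onto itself, so $F_*T$ is torsion; as $c_1$ is additive on exact sequences it is enough to handle simple torsion sheaves, i.e.\ those supported at a single point with one-dimensional stalk. Over a non-stacky point this is the classical fact that the absolute Frobenius, being the identity on the underlying point and the $p$-th power on functions over the perfect field $k$, preserves lengths of finite-length modules, hence associated $0$-cycles; concretely one passes to an \'etale cover $V\to\mathfrak C$ by a scheme curve — over which $F$ restricts to the Frobenius of $V$, the square \eqref{stackdiagram} being cartesian for \'etale $V\to\mathfrak C$ — and uses the injectivity of $A^1(\mathfrak C)\hookrightarrow A^1(V)$. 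At a stacky point $Q_i$, in the local presentation $[\Spec k[w]/\mu_{r_i}]$ of \eqref{local} the Frobenius is $w\mapsto w^p$, equivariant through the automorphism $\zeta\mapsto\zeta^p$ of $\mu_{r_i}$ (here one uses $p\nmid r_i$), so $F_*$ merely permutes the one-dimensional $\mu_{r_i}$-representations at $Q_i$, each of which has first Chern class $[Q_i]$ (of degree $1/r_i$); thus $c_1(F_*T)=c_1(T)$ in all cases. Together with the two reductions above this proves the proposition. (Alternatively, one can read the identity off the degree-one component of Grothendieck--Riemann--Roch for the finite flat morphism $F$, which is available for Deligne--Mumford stacks.)
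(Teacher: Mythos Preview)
Your argument is correct and follows essentially the same route as the paper: reduce to line bundles via a filtration (the paper phrases this as taking a section of the full flag bundle), then reduce to $\mathcal O_{\mathfrak C}$ by showing that $c_1(F_*\mathcal L)-c_1(F_*\mathcal L(-Q))=[Q]$ for any closed point $Q$. The paper packages this second step as an induction on $\sum s_i+\sum t_j$ for $\mathcal L=\mathcal O_{\mathfrak C}(\sum s_iQ_i^+-\sum t_jQ_j^-)$, invoking the exact sequence $0\to\det F_*\mathcal L(-Q)\to\det F_*\mathcal L\to k(Q)\otimes\rho\to0$, whereas you isolate the underlying fact as the torsion identity $c_1(F_*T)=c_1(T)$ and apply it in a single step via two effective divisors. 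Your treatment of why this identity holds at the stacky points (the Frobenius permutes the characters of $\mu_{r_i}$, using $p\nmid r_i$, and every simple skyscraper at $Q_i$ has class $[Q_i]$ in the rational Chow group) is more explicit than the paper, which simply writes the cokernel as $k(Q)\otimes\rho$ and uses $c_1(k(Q)\otimes\rho)=[Q]$ without further comment.
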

\proof
We have a full flag of sub-bundles
$$
0 \subset \E_{1} \subset \E_{2} \subset \cdots \subset \E_{r} = \E
$$
corresponding to a section of the full flag bundle of $\E$ over an orbifold curve $\mathfrak{C}$.
Hence it is enough to prove for a line bundle.

We take a line bundle $\mathcal{O}_{\mathfrak{C}}(\sum s_{i} Q_{i}^{+} - \sum t_{j} Q_{j}^{-})$, where $s_{i} , t_{j} >0$, and $Q_{i}^{+}, Q_{j}^{-}$ are closed points on $\mathfrak{C}$. 
We show  the assertion by induction on $\sum_{i} s_{i} + \sum_{j} t_{j}$.
The first step for the induction follows from \eqref{chern1}.
For the inductive step, it is enough to show that for any line bundle $\mathcal{L}$ and any closed point $Q$ on $\mathfrak{C}$, the assertions for $\mathcal{L}$ and $\mathcal{L}(-Q)$ are equivalent.
To this end, we introduce an exact sequence
\begin{eqnarray}
\label{6.2}
0 \to \det F_{\ast} \mathcal{L}(-Q) \to \det F_{\ast} \mathcal{L} \to k(Q) \otimes \rho \to 0,
\end{eqnarray}
where $\rho$ is a one-dimensional representation of the automorphisms group of a closed point $Q \in \mathfrak{C}(\Spec k)$.
Note here that if $Q$ is a stacky point, then the inclusion $\mathcal{L}(-Q) \to \mathcal{L}$ locally corresponds to the ideal generated by the coordinate vanishing at $Q$, and this is an element of degree one around $Q$.
Thus we need the twist by $\rho$ in \eqref{6.2}. 
Now the inductive step follows from $c_{1}( F_{\ast} \mathcal{L}) = c_{1}( F_{\ast} \mathcal{L}(-Q)) + c_{1}(\mathcal{O}_{\mathfrak{C}}(Q))$ due to the exact sequence \eqref{6.2}.
\endproof


\subsection{Slope stability}
For a vector bundle $\E$ on $\mathfrak{C}$, we define the \emph{slope} $\mu(\E)$ of $\E$ by
$$
\mu(\E) = \frac{\deg c_{1}(\E)}{\rk \E}.
$$
As in the previous subsection, we assume that $p$ does not divide any weight $r_{i}$.
\begin{proposition}
\label{rr}
We have 
$$
\mu(F^{\ast} F_{\ast} \E) = \mu(\E) + \frac{p-1}{2} \delta_{\mathfrak{C}}.
$$
\end{proposition}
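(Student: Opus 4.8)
The plan is to compute both sides directly using the Riemann--Roch / degree formalism on the orbifold curve together with Proposition~\ref{chern}. First I would observe that $F^{\ast}F_{\ast}\E$ is a vector bundle of the same rank as $F_{\ast}\E$, namely $p\cdot\rk\E$, since $F\colon\mathfrak{C}\to\mathfrak{C}$ is finite flat of degree $p$ (as explained in Section~\ref{frob} and recalled at the start of this section). Thus $\rk(F^{\ast}F_{\ast}\E)=p\,r$ where $r=\rk\E$.

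Next I would compute the first Chern class. By Proposition~\ref{chern} we have $c_{1}(F_{\ast}\E)=\frac{p-1}{2}r K_{\mathfrak{C}}+c_{1}(\E)$ in $A^{\bullet}(\mathfrak{C})$. Pulling back along $F$ and using $F^{\ast}c_{1}(\mathcal{G})=c_{1}(F^{\ast}\mathcal{G})$ together with the fact that on $\mathfrak{C}$ the Frobenius acts on the degree-one part of the Chow ring by multiplication by $p$ (i.e.\ $\deg F^{\ast}\alpha = p\deg\alpha$ for $\alpha\in A^{1}(\mathfrak{C})_{\Q}$, which follows from flatness and $\deg F=p$), I get
$$
\deg c_{1}(F^{\ast}F_{\ast}\E) = p\Bigl(\frac{p-1}{2}r\deg K_{\mathfrak{C}}+\deg c_{1}(\E)\Bigr)
= p\Bigl(\frac{p-1}{2}r\,\delta_{\mathfrak{C}}+\deg c_{1}(\E)\Bigr),
$$
using that $\deg K_{\mathfrak{C}}=\deg\omega_{\mathfrak{C}}=\delta_{\mathfrak{C}}$ by the formula for $\deg\omega_{\mathfrak{C}}$ recorded after Lemma~\ref{dualizing} (and in the weighted-projective-line case, by the definition of $\delta_{\mathfrak{C}}$). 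Dividing by the rank $p\,r$ then yields
$$
\mu(F^{\ast}F_{\ast}\E)=\frac{p\bigl(\frac{p-1}{2}r\,\delta_{\mathfrak{C}}+\deg c_{1}(\E)\bigr)}{p\,r}
=\mu(\E)+\frac{p-1}{2}\,\delta_{\mathfrak{C}},
$$
which is exactly the claimed identity.

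The only point requiring any care — and the step I expect to be the main obstacle — is justifying that degrees scale by $p$ under $F^{\ast}$ on the stack $\mathfrak{C}$, since the naive "$\deg$ pushforward" behaviour on a Deligne--Mumford stack with stacky points is where one could slip. I would handle this either by passing to a smooth cover (the $\mu_{m}$-cover $\varphi\colon E\to\mathfrak{C}$ when $\delta_{\mathfrak{C}}\le 0$, or more generally an \'etale atlas / a suitable finite cover by a scheme curve) on which the classical projection formula and $\deg F^{\ast}=p\deg$ are standard, and then dividing by the covering degree; or by invoking the compatibility of the $\deg\colon A(\mathfrak{C})\to\Q$ map with proper pushforward along $F$ together with $F_{\ast}F^{\ast}\mathcal{G}\cong\mathcal{G}\otimes F_{\ast}\mathcal{O}_{\mathfrak{C}}$ and \eqref{chern1}. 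Either route reduces the statement to the rank and $c_{1}$ computations above, so the proof is short once that normalization is in place.
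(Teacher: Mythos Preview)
Your proof is correct and follows essentially the same route as the paper: the paper derives $\mu(F_{\ast}\E)=p^{-1}\bigl(\mu(\E)+\tfrac{p-1}{2}\delta_{\mathfrak{C}}\bigr)$ from Proposition~\ref{chern}, then invokes $\mu(F^{\ast}\F)=p\,\mu(\F)$ to conclude, which is exactly your computation reorganized. The paper simply asserts the scaling $\mu(F^{\ast}\F)=p\,\mu(\F)$ without the extra justification you outline, so your extra care there is not needed for the write-up but is not misplaced either.
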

\proof
By Proposition \ref{chern}, we have $\mu(F_{\ast} \E) = p^{-1} (\mu(\E) + \frac{p-1}{2} \delta_{\mathfrak{C}})$.
Since $\mu(F^{\ast} \F) =  p \mu(\F)$ for any vector bundle $\F$ on $\mathfrak{C}$, the assertion follows.
\endproof

\begin{definition}
We say that a vector bundle $\E$ on $\mathfrak{C}$ is \emph{semi-stable} if for any non-trivial proper 
sub-bundle $\E'$ of $\E$, we have an inequality
$$
\mu(\E') \le \mu(\E).
$$
If the inequality is always strict, we say that $\E$ is \emph{stable}. 
\end{definition}
Semi-stability or stability is equivalent to the condition that the same inequality holds for any non-trivial subsheaf of $\E$ as in \cite[1.2.2]{OSS}.
We also remark that it is different from the stability defined in \cite{N}. But it is enough for our purpose to 
show the indecomposability of Frobenius push-forwards $F^{e}_{\ast} \mathcal{O}_{\mathfrak{C}}$ for $e>0$. For this purpose, we 
follow the arguments in \cite{KS}, \cite{Su}. It is straightforward to modify their arguments to our situation.
The only difference is that we must consider grading even in local situation.

For a vector bundle $\E$ on $\mathfrak{C}$, there exists a connection 
$$\nabla = \id \otimes d \colon F^{\ast} \E = F^{-1} \E \otimes_{F^{-1} \mathcal{O}_{\mathfrak{C}}} \mathcal{O}_{\mathfrak{C}} \to F^{\ast} \E \otimes \Omega_{\mathfrak{C}} = F^{-1} \E \otimes_{F^{-1} \mathcal{O}_{\mathfrak{C}}} \Omega_{\mathfrak{C}}
$$ 
called the \emph{canonical connection} similarly for a variety over $k$ as in \cite{KS}, \cite{Su}.
Here $d \colon \mathcal{O}_{\mathfrak{C}} \to \Omega_{\mathfrak{C}}$ is the differential, which is $F^{-1}\mathcal{O}_{\mathfrak{C}}$-linear. 
This is locally written as
$$
M \otimes_{B} B \to M \otimes_{B} B \otimes_{B} \Omega_{B/k} \cong M \otimes_{B} \Omega_{B/k};\; m \otimes f \mapsto m \otimes df,
$$ 
where $B$ has a grading from the local description \eqref{local}, and $\nabla$ preserves the grading.

When $\E=F_{\ast} W$ for a vector bundle $W$ on $\mathfrak{C}$, we introduce the canonical filtration of $\F=F^{\ast} F_{\ast} W$ due to \cite{KS}, \cite{Su}.
We put $\F_{0} = \F$, $\F_{1} = \ker (F^{\ast} F_{\ast} W \to W)$, and $\F_{\ell} = \ker (\F_{\ell -1} \stackrel{\nabla}{\to} \F \otimes \Omega_{\mathfrak{C}} \to (\F /\F_{\ell -1}) \otimes \Omega_{\mathfrak{C}})$.
We have the filtration:
$$
\cdots \subset \F_{3} \subset \F_{2} \subset \F_{1} \subset \F_{0} : = \F := F^{\ast} F_{\ast} W.
$$

\begin{definition}
We call this filtration $\F_{\bullet}$ the \emph{canonical filtration} on $\F=F^{\ast} F_{\ast} W$.
\end{definition}


Since the local computations in \cite{KS}, \cite{Su} holds equivariantly, we have the following lemma.

\begin{lemma}\label{filt}
$\phantom{Under the above notation we have the following}$
\begin{enumerate}
\item[$(1)$] $\F_{0} / \F_{1} \cong W$, $\nabla (\F_{\ell +1}) \subset \F_{\ell} \otimes \Omega_{\mathfrak{C}}$ for $\ell \ge 1$.
\item[$(2)$] $\nabla$ induces an isomorphism $\F_{\ell}/\F_{\ell +1} \cong \F_{\ell -1} / \F_{\ell} \otimes \Omega_{\mathfrak{C}}$ for $\ell < p$.
\item[$(3)$] If $W$ is stable $($resp.\ semi-stable$)$, then $\F_{\ell} / \F_{\ell +1}$ is stable $(resp.\ 
$semi-stable$)$ for any $\ell$.
\end{enumerate}
\end{lemma}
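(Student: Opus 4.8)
The plan is to reduce all three statements to the corresponding local computations of \cite{KS}, \cite{Su} on a smooth scheme, carried out $\prod_i\mu_{r_i}$-equivariantly. Since $p$ divides no weight, $\mathfrak{C}$ is Deligne--Mumford (Lemma \ref{cadman} (5)), and over an affine chart $W=\Spec A$ as in \eqref{local} the projection $\Spec B\to\pi^{-1}W\cong[\Spec B/\prod_i\mu_{r_i}]$ is a torsor under the \'etale group scheme $\prod_i\mu_{r_i}$, hence an \'etale surjection from an honest scheme. Because the Frobenius square \eqref{stackdiagram} is cartesian for \'etale morphisms (Section \ref{frob}) and $F\colon\mathfrak{C}\to\mathfrak{C}$ is flat and finite, the formations of $F^*$, $F_*$, the canonical connection $\nabla=\id\otimes d$ and the canonical filtration $\F_\bullet$ all commute with pullback along $\Spec B\to\mathfrak{C}$. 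So every assertion that a morphism of coherent sheaves on $\mathfrak{C}$ is an isomorphism may be checked on $\Spec B$, where it becomes the classical statement, provided one carries along the $\prod_i\mu_{r_i}$-grading; since $p\nmid r_i$, taking invariants under this action is exact, so nothing is lost upon descent. This is the only new feature compared with \cite{KS}, \cite{Su}: one must keep track of the grading present in the local ring $B$ of \eqref{local}.

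For (1): the morphism $F^*F_*W\to W$ appearing in the definition of $\F_1$ is the counit of the adjunction $(F^*,F_*)$, which is surjective because $F$ is finite and faithfully flat; this identifies $\F_0/\F_1$ with $W$. The fact that $\F_\bullet$ is a decreasing filtration by coherent $\mathcal{O}_\mathfrak{C}$-submodules, and that $\nabla(\F_{\ell+1})\subseteq\F_\ell\otimes\Omega_\mathfrak{C}$ for $\ell\ge1$, follow by induction directly from the recursive definition: the composite $\F_\ell\stackrel{\nabla}{\to}\F\otimes\Omega_\mathfrak{C}\to(\F/\F_\ell)\otimes\Omega_\mathfrak{C}$ is $\mathcal{O}_\mathfrak{C}$-linear by the Leibniz rule (the derivation term $s\otimes df$ with $s\in\F_\ell$ already lies in $\F_\ell\otimes\Omega_\mathfrak{C}$), so its kernel $\F_{\ell+1}$ is a coherent subsheaf of $\F_\ell$, and by construction $\nabla(\F_{\ell+1})\subseteq\F_\ell\otimes\Omega_\mathfrak{C}$. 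That each $\F_\ell$ really is an $\mathcal{O}_\mathfrak{C}$-subsheaf (and not just a sheaf of abelian groups) is seen locally on $\Spec B$, where $\nabla$ respects the grading of \eqref{local}.

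For (2): by (1) the connection $\nabla$ descends to an $\mathcal{O}_\mathfrak{C}$-linear map $\bar\nabla_\ell\colon\F_\ell/\F_{\ell+1}\to(\F_{\ell-1}/\F_\ell)\otimes\Omega_\mathfrak{C}$, and this map is injective straight from the definition of $\F_{\ell+1}$ (an element of $\F_\ell$ whose image in $(\F_{\ell-1}/\F_\ell)\otimes\Omega_\mathfrak{C}$ vanishes lies in $\F_{\ell+1}$). Surjectivity of $\bar\nabla_\ell$ is the substantive point. I would check it on the chart $\Spec B$, where $F_*W$ pulls back to a locally free sheaf admitting the explicit local frame of \cite{KS}, \cite{Su}, where $\Omega_B=B\,dt$ for a suitable parameter $t$, and where $F^*F_*W$, $\nabla$ and the canonical filtration have their explicit description in terms of the element $1\otimes t-t\otimes1$; that computation exhibits $\bar\nabla_\ell$ as an isomorphism onto $(\F_{\ell-1}/\F_\ell)\otimes\Omega_B$. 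The one addition is that $t$, $dt$ and the chosen local frame of $F_*W$ can be taken homogeneous for the $\prod_i\mu_{r_i}$-action, so the local isomorphism is equivariant and descends to $\mathfrak{C}$.

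For (3): iterating (1) and (2) gives $\F_\ell/\F_{\ell+1}\cong W\otimes\Omega_\mathfrak{C}^{\otimes\ell}$ for all $\ell$. Since $\mathfrak{C}$ is a smooth orbifold curve, $\Omega_\mathfrak{C}=\omega_\mathfrak{C}$ is invertible, so tensoring by $\Omega_\mathfrak{C}^{\otimes\ell}$ shifts the slope of every coherent subsheaf by the constant $\ell\deg\omega_\mathfrak{C}$ and gives a bijection on subsheaves; hence $W\otimes\Omega_\mathfrak{C}^{\otimes\ell}$ is stable (resp.\ semi-stable) whenever $W$ is, by the subsheaf formulation of (semi)stability (cf.\ \cite[1.2.2]{OSS}). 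The main obstacle is the equivariant local computation underlying the surjectivity in (2): one must verify that the explicit construction of the canonical filtration in \cite{KS}, \cite{Su} is compatible with the grading imposed by the root-stack chart, and it is precisely the hypothesis $p\nmid r_i$ --- giving tameness of the chart, \'etaleness of $\Spec B\to\mathfrak{C}$, and exactness of $\prod_i\mu_{r_i}$-invariants --- that lets the classical argument apply verbatim.
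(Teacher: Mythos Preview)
Your proof is correct and takes essentially the same approach as the paper, which simply records that the local computations of \cite{KS}, \cite{Su} hold equivariantly and then says (1) follows from the definition, (2) from \cite[Lemma 2.1 (ii)]{Su} together with the grading, and (3) from (1) and (2); you have just spelled out these reductions in more detail. Note that you correctly prove $\F_0/\F_1\cong W$, whereas the ``$\F_0/\F_1\cong F^{\ast}F_{\ast}W$'' printed in the statement of (1) is a typo.
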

\proof
(1) follows from the Definition.
(2) follows from \cite[Lemma 2.1 (ii)]{Su} and the fact that the isomorphism preserves grading.
(3) follows from (1) and (2).
\endproof
By this lemma, we have $\F_{p}=0$ and $\F_{p-1} \neq 0$.

\begin{theorem}\label{stability}
Let $W$ be a vector bundle on an orbifold curve $\mathfrak{C}$, and suppose 
that $p$ does not divide any weight $r_i$. If $\delta_{\mathfrak{C}}>0$ and $W$ is stable $($resp.\ $\delta_{\mathfrak{C}} \ge 0$ and $W$ 
is semi-stable$)$, then $F_{\ast} W$ is stable $($resp.\ semi-stable$)$. 
\end{theorem}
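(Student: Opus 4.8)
## Proof Strategy for Theorem \ref{stability}

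The plan is to follow the now-standard argument of Shepherd-Barron, Sun, and Kitadai--Sumihiro \cite{KS}, \cite{Su} for curves of genus $>1$, adapted to the orbifold setting via the canonical filtration. The key point is that all the local computations underlying that argument are equivariant and grading-preserving, so Lemma \ref{filt} already gives us the structural input we need; what remains is a slope estimate. I would argue by contradiction: suppose $F_*W$ is not stable (resp.\ not semi-stable), and let $\G \subset F_*W$ be a subsheaf violating the slope inequality, i.e.\ $\mu(\G) \geq \mu(F_*W)$ (resp.\ $>$). Pulling back under the Frobenius $F \colon \mathfrak{C} \to \mathfrak{C}$, we obtain $F^*\G \subset F^*F_*W = \F$, and since $F$ is flat and finite of degree $p$ we have $\mu(F^*\G) = p\,\mu(\G)$, while by Proposition \ref{rr}, $\mu(\F) = \mu(W) + \frac{p-1}{2}\delta_{\mathfrak{C}}$.

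Next I would intersect $F^*\G$ with the canonical filtration $\F_\bullet$ and look at the induced subsheaves $F^*\G \cap \F_\ell$ inside the graded pieces $\F_\ell/\F_{\ell+1}$. By Lemma \ref{filt}(2), $\F_\ell/\F_{\ell+1} \cong (\F_0/\F_1)\otimes \Omega_{\mathfrak{C}}^{\otimes \ell} \cong F^*W \otimes \omega_{\mathfrak{C}}^{\otimes \ell}$, which has rank $r = \rk W$ and slope $\mu(F^*W) + \ell\,\delta_{\mathfrak{C}} = p\,\mu(W) + \ell\,\delta_{\mathfrak{C}}$. By Lemma \ref{filt}(3), each graded piece is stable (resp.\ semi-stable), so any nonzero proper subsheaf of $\F_\ell/\F_{\ell+1}$ has strictly smaller (resp.\ no larger) slope. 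Summing the slope contributions of $F^*\G$ across the $p$ graded pieces $\ell = 0, 1, \dots, p-1$, and using that the ranks $a_\ell := \rk(F^*\G\cap\F_\ell) - \rk(F^*\G\cap\F_{\ell+1})$ are non-increasing in $\ell$ (this is where the connection $\nabla$ and Lemma \ref{filt}(1),(2) force a log-concavity/monotonicity constraint on how a $\nabla$-stable — or merely flat — subsheaf can sit in the filtration), one derives an upper bound for $\deg F^*\G$ strictly smaller than $p\cdot(\text{rank of }F^*\G)\cdot\mu(F_*W)$ once $\delta_{\mathfrak{C}}>0$, contradicting $\mu(F^*\G) = p\,\mu(\G) \geq p\,\mu(F_*W)$. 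In the semi-stable case with $\delta_{\mathfrak{C}}\geq 0$ the same computation gives the non-strict inequality directly.

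The main obstacle — and the step I expect to require the most care — is making precise the claim that the rank jumps $a_\ell$ of $F^*\G$ along the canonical filtration are monotone, and packaging the resulting slope bookkeeping correctly. In the classical case this is handled by the observation that $\nabla$ maps $F^*\G \cap \F_\ell$ into $(F^*\G \cap \F_{\ell-1})\otimes\Omega_{\mathfrak{C}}$ up to the $\mathcal{O}_{\mathfrak{C}}$-linearity failure of $\nabla$, which upon passing to associated graded becomes an honest injection $(F^*\G\cap\F_\ell)/(F^*\G\cap\F_{\ell+1}) \hookrightarrow [(F^*\G\cap\F_{\ell-1})/(F^*\G\cap\F_\ell)]\otimes\Omega_{\mathfrak{C}}$; this is exactly the content we need and it transports verbatim to $\mathfrak{C}$ because, as emphasized after the definition of the canonical filtration, $\nabla$ preserves the grading in the local chart \eqref{local}. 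So the proof is genuinely a matter of transcription: verify that each cited lemma of \cite{Su} holds equivariantly (Lemma \ref{filt} records this), then run the numerical estimate with $2g-2$ replaced by $\delta_{\mathfrak{C}}$ throughout. I would also record explicitly that semi-stability may be tested on subsheaves rather than subbundles (noted in the excerpt via \cite{OSS}), so that no normality/reflexivity subtleties obstruct passing between $\G$ and its saturation.
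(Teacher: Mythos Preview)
Your proposal is correct and follows essentially the same argument as the paper's proof (which in turn is transcribed from \cite{Su}): both pull back a destabilizing subsheaf $\E'\subset F_*W$, intersect $F^*\E'$ with the canonical filtration, bound the slopes of the induced graded pieces via Lemma \ref{filt}(2),(3), and use the monotonicity $r_0\ge r_1\ge\cdots$ coming from $\nabla(F^*\E')\subset F^*\E'\otimes\Omega_{\mathfrak{C}}$ to control the sum $\sum_\ell(\frac{p-1}{2}-\ell)r_\ell$. One small slip to fix: $\F_0/\F_1\cong W$ (not $F^*W$), so the slope of $\F_\ell/\F_{\ell+1}$ is $\mu(W)+\ell\,\delta_{\mathfrak{C}}$ rather than $p\,\mu(W)+\ell\,\delta_{\mathfrak{C}}$; this does not affect your outline since the comparison is with $\mu(F^*F_*W)=\mu(W)+\frac{p-1}{2}\delta_{\mathfrak{C}}$.
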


\proof
It follows from the similar argument as in the proof of \cite[Theorem 2.2]{Su}.
But we give a proof for the convenience of readers.

We take a non-trivial sub-bundle $\E' \subset F_{\ast} W$, and show $\mu(\E') < \mu(F_{\ast} W)$ $($resp.\ $\mu(\E') \le \mu(F_{\ast} W))$.
By Proposition \ref{rr}, we have 
\begin{eqnarray}
\label{slope1}
\mu(F^{\ast} F_{\ast} W) = \mu(W) + \frac{p-1}{2} \delta_{\mathfrak{C}}.
\end{eqnarray}
We consider the induced filtration
$$
0 \subset \F^{m} \cap F^{\ast} \E' \subset \cdots \subset \F^{1} \cap F^{\ast} \E' \subset F^{\ast} \E',
$$
where we assume $\F^{m+1} \cap F^{\ast} \E' =0$ and $\F^{m} \cap F^{\ast} \E' \neq 0$ for $m <p$.
If we put 
$$
r_{\ell}=\rk\left( \frac{\F^{\ell} \cap F^{\ast} \E' }{ \F^{\ell+1} \cap F^{\ast} \E' }\right),
$$ 
then we have
\begin{eqnarray}
\label{slope2}
\mu(F^{\ast} \E') = 
\frac1{\rk (\E')} \sum_{\ell =0}^m \mu \left(\frac{\F^{\ell} \cap F^*\E'}{\F^{\ell+1} \cap F^*\E'}\right) r_{\ell} 
 \le \frac{1}{\rk (\E')} \sum_{\ell=0}^{m} (\mu(W) + \ell \delta_{\mathfrak{C}}) r_{\ell},
\end{eqnarray}
where the last inequality follows from Lemma \ref{filt} (2), (3).

Putting \eqref{slope1} and \eqref{slope2} together, we have
\begin{eqnarray}
\nonumber
\mu(F_{\ast} W) - \mu(\E') 
&=& \frac{1}{p} (\mu(F^{\ast} F_{\ast} W) - \mu(F^{\ast} \E')) \\
\label{sum0}
&\ge& \frac{\delta_{\mathfrak{C}}}{\rk(\E') p} \sum_{\ell=0}^{m} \left(\frac{p-1}{2} - \ell \right) r_{\ell}.
\end{eqnarray}
If $m \le \frac{p-1}{2}$, then the last sum in \eqref{sum0} is greater than $0$, and we get the desired inequality.
Hence we may assume $m > \frac{p-1}{2}$.

Then the last sum in \eqref{sum0} is equal to
\begin{eqnarray}
\label{sum}
\sum_{\ell=m+1}^{p-1} \left(\ell - \frac{p-1}{2}\right) r_{p-1- \ell} + \sum_{\ell 
 > \frac{p-1}{2}}^{m} \left(\ell - \frac{p-1}{2}\right) \left(r_{p-1-\ell} - r_{\ell}\right),
\end{eqnarray}
since we have $\ell - \frac{p-1}{2}=\frac{p-1}{2} - (p -1 - \ell)$, and $p -1-\ell$ runs from $0$ to $p-m-2$ in the first sum, and $p-m-1 \le p-1-\ell  < \frac{p-1}{2}$ in the second sum.
On the other hand, the isomorphism in Lemma \ref{filt} (2) induces an inclusion $\F^{\ell}\cap F^*\E' / \F^{\ell+1}\cap F^*\E' \subset \left( \F^{\ell-1}\cap F^*\E' / \F^{\ell}\cap F^* \E' \right) \otimes \Omega_{\mathfrak{C}}$, since we have $\nabla(F^{\ast} \E') \subset F^{\ast} \E' \otimes \Omega_{\mathfrak{C}}$ by the definition $\nabla = \id_{F^{-1} F_{\ast} W} \otimes d$.
Hence we have 
$$
r_{0} \ge r_{1} \ge \cdots \ge r_{m},
$$
and \eqref{sum} is greater than, or equal to $0$. 
This implies the semi-stability of $F_*W$. 

Finally we assume that $\delta_{\mathfrak{C}}>0$ and $W$ is stable.
If $\mu(F_{\ast} W) - \mu(\E')=0$, then we have an equality in \eqref{slope2}. 
This implies 
$\F^{\ell} \cap F^{\ast} \E' / \F^{\ell+1} \cap F^{\ast} \E'  = \F^{\ell} / \F^{\ell-1}$ 
for all $\ell=0,1,\ldots,m$, and we have $r_{0} = r_{1} = \cdots = r_{m} = \rk(W)$.
Furthermore since \eqref{sum} must be equal to $0$, we have $m=p-1$.
This implies $\rk (\E') = \rk F_{\ast}W$, and a contradiction.
\endproof

As a direct corollary, we have the following theorem.

\begin{theorem}\label{g>1}
We assume that $\delta_{\mathfrak{C}}$ is greater than $0$.
Then for any $e$, the Frobenius push-forward $F^{e}_{\ast} \mathcal{O}_{\mathfrak{C}}$ is indecomposable.
\end{theorem}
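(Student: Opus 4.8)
The plan is to derive the statement directly from Theorem \ref{stability}, using that a line bundle is trivially stable and that a stable vector bundle is indecomposable. For the base case $e=1$, the structure sheaf $\mathcal{O}_{\mathfrak{C}}$ is a line bundle, hence stable, so Theorem \ref{stability} applied with $W=\mathcal{O}_{\mathfrak{C}}$ and $\delta_{\mathfrak{C}}>0$ shows that $F_*\mathcal{O}_{\mathfrak{C}}$ is stable. For $e\ge 2$ I would argue by induction on $e$, proving the slightly stronger assertion that $F^e_*W$ is stable for \emph{every} stable vector bundle $W$ on $\mathfrak{C}$. Using the identification $\mathfrak{C}^{(e)}\cong\mathfrak{C}$ of Section \ref{frob}, one has $F^e_*W\cong F_*(F^{e-1}_*W)$; by the inductive hypothesis $F^{e-1}_*W$ is stable, and since $\delta_{\mathfrak{C}}>0$ is unchanged on the Frobenius twist, Theorem \ref{stability} applies again to give that $F_*(F^{e-1}_*W)=F^e_*W$ is stable. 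Taking $W=\mathcal{O}_{\mathfrak{C}}$ yields that $F^e_*\mathcal{O}_{\mathfrak{C}}$ is stable for every $e\ge 1$, the case $e=0$ being trivial.

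It then remains to note that a stable vector bundle $\E$ on $\mathfrak{C}$ is indecomposable. Indeed, if $\E\cong\E_1\oplus\E_2$ with $\E_1,\E_2\neq 0$, then each $\E_i$ is a nonzero proper subbundle of $\E$, so $\mu(\E_i)<\mu(\E)$ by stability; but $\mu(\E)=\frac{\rk \E_1\cdot\mu(\E_1)+\rk \E_2\cdot\mu(\E_2)}{\rk \E_1+\rk \E_2}$ is a convex combination of $\mu(\E_1)$ and $\mu(\E_2)$, which forces $\mu(\E)<\mu(\E)$, a contradiction. Applying this to $\E=F^e_*\mathcal{O}_{\mathfrak{C}}$ completes the proof.

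The main obstacle has in fact already been cleared by Theorem \ref{stability}, whose proof adapts the methods of \cite{KS}, \cite{Su} to the equivariant/graded setting of an orbifold curve. The only point that still needs care here is the iteration step: one must check that the stability of $F^{e-1}_*\mathcal{O}_{\mathfrak{C}}$, regarded as a vector bundle on $\mathfrak{C}^{(e-1)}\cong\mathfrak{C}$, is precisely the hypothesis required to re-apply Theorem \ref{stability}, and that the slope and the invariant $\delta_{\mathfrak{C}}$ are preserved under this identification. Beyond this bookkeeping I do not anticipate any genuine difficulty.
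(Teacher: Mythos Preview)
Your proposal is correct and follows essentially the same approach as the paper: iterate Theorem \ref{stability} starting from the line bundle $\mathcal{O}_{\mathfrak{C}}$ to conclude that $F^e_*\mathcal{O}_{\mathfrak{C}}$ is stable, and then observe that stable bundles are indecomposable. The paper's proof is terser—it leaves the iteration implicit (calling the result a ``direct corollary'') and phrases the indecomposability step as obtaining the contradictory chain $\mu(\E_1)<\mu(F^e_*\mathcal{O}_{\mathfrak{C}})<\mu(\E_2)$ together with its reverse—but the substance is the same as your convex-combination argument.
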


\proof
For a contradiction, we assume that $F^{e}_{\ast} \mathcal{O}_{\mathfrak{C}}$ is decomposed into non-trivial vector bundles 
$\E_{1}$ and $\E_{2}$. Then both $\E_{1}$ and $\E_{2}$ are sub-bundles of $F^{e}_{\ast} \mathcal{O}_{\mathfrak{C}}$. 
Hence we get inequalities 
$\mu(\E_1) < \mu(F^e_{\ast}\mathcal{O}_\mathfrak{C}) < \mu(\E_2)$ and $\mu(\E_2) < \mu(F^e_{\ast}\mathcal{O}_\mathfrak{C}) < \mu(\E_1)$, 
and a contradiction.
\endproof


\section{The FFRT property of $R(\PP^1,D)$ and concluding remarks}

In this section we prove our results on the FFRT property of $R(C,D)$. 

\begin{proposition}
Let $C$ be a smooth projective curve of genus $g \ge 1$ over an algebraically closed field of characteristic $p>0$ and $D$ an ample $\Q$-Cartier divisor on $C$. Then the graded ring $R(C,D)$ does not have FFRT. 
\end{proposition}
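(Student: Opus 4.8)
The plan is to reduce to the case of an \emph{integral} ample polarization, and then, for every $e$, to exhibit among the summands of $F^e_*\mathcal{O}_C$ enough indecomposable bundles that are pairwise inequivalent up to twisting by $L$; this forces the failure of GFFRT, hence of FFRT by Corollary~\ref{FFRT-GFFRT}.

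First I would clear denominators: choose $r>0$ so that $rD$ is an integral (ample, Cartier) divisor. Then $R(C,D)^{(r)}=\bigoplus_{m\ge0}R(C,D)_{rm}$ equals $R(C,rD)$, since $\lfloor rmD\rfloor=m(rD)$ for all $m\ge0$. Because $R(C,D)$ is module-finite over $R(C,rD)$, and $R(C,rD)=R(C,D)^{(r)}$ is a direct summand of $R(C,D)$ as an $R(C,rD)$-module, and since a ring that is a module-finite direct summand of a ring of FFRT again has FFRT \cite{SVdB}, it suffices to show that $R(C,rD)$ does not have FFRT. So I may assume $D$ is integral, and then in the notation of Section~3 one has $\mathfrak{C}=C$, $L=\mathcal{O}_C(D)$, with $\deg L=\deg D>0$; in particular $\sim_L$ preserves rank, and two line bundles of the same degree are $\sim_L$-equivalent only if they are isomorphic (tensoring by a nontrivial power of $L$ changes the degree).

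By Corollary~\ref{FFRT-GFFRT}, FFRT of $R(C,D)$ would force the indecomposable summands of $F^e_*\mathcal{O}_C$ (the case $i=0$, all $e\ge0$) to fall into finitely many $\sim_L$-classes. If $g\ge2$, then $\delta_C=\deg\omega_C=2g-2>0$ (with no stacky points the hypothesis on weights in Section~6 is vacuous), so Theorem~\ref{g>1} gives that $F^e_*\mathcal{O}_C$ is indecomposable of rank $p^e$ for every $e$; as the $p^e$ are distinct and $\sim_L$ preserves rank, these are infinitely many $\sim_L$-classes, a contradiction. If $g=1$, write $C=E$: when $E$ is supersingular, $F^e_*\mathcal{O}_E\cong\F_{p^e}$ is the indecomposable Atiyah bundle of rank $p^e$ (Lemma~\ref{elliptic}) and I conclude as before; when $E$ is ordinary, $F^e_*\mathcal{O}_E$ is a direct sum of $p^e$ pairwise non-isomorphic $p^e$-torsion line bundles (Lemma~\ref{elliptic}), all of degree $0$, hence pairwise $\sim_L$-inequivalent by the remark above, and letting $e\to\infty$ again contradicts GFFRT.

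The one case in which the crude ``the rank tends to infinity'' mechanism does not apply is the ordinary genus-$1$ case, where every Frobenius summand is a line bundle; the substitute is that all these line bundles have degree $0$ while $L$ has positive degree, which is what keeps them $\sim_L$-inequivalent. A minor point to check is the legitimacy of the Veronese reduction (that $R(C,rD)$ really is a module-finite direct summand of $R(C,D)$), after which one could in fact simply quote \cite{SVdB} directly for the statement about cone singularities over curves of genus $\ge1$; the route above instead re-derives it within the present stack-theoretic framework.
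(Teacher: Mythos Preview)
Your proof is correct, and the core computation---that $F^e_*\mathcal{O}_C$ has infinitely many non-isomorphic indecomposable summands as $e$ varies, via Lemma~\ref{elliptic} for $g=1$ and Theorem~\ref{g>1} (equivalently \cite{Su}) for $g\ge2$---is the same as in the paper. The difference lies in how the fractional part of $D$ is dispatched. You pass to the Veronese subring $R(C,rD)$ so that the polarization becomes integral and $\mathfrak{C}=C$, invoking the \cite{SVdB} result that FFRT descends to module-finite direct summands; the paper instead keeps the orbifold $\pi\colon\mathfrak{C}\to C$ and uses $\pi_*F^e_*\mathcal{O}_\mathfrak{C}\cong F^e_*\mathcal{O}_C$ to push the non-GFFRT property from $C$ up to $\mathfrak{C}$. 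Your route is slightly more elementary (no orbifold needed) at the cost of the external Veronese lemma; the paper's route stays entirely within its own framework and also avoids any hypothesis on $p$ versus the weights, since only $\pi_*$ is used. One small remark: GFFRT in this paper is defined up to \emph{isomorphism}, not up to $\sim_L$, so your careful bookkeeping of $\sim_L$-inequivalence (while correct) is more than you need---in each case the bundles you produce are already pairwise non-isomorphic.
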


\begin{proof}
First note that $C$ does not have GFFRT. This follows from Lemma \ref{atiyah} when $g=1$ and \cite{Su} when case $g>1$. Now let $\pi \colon \mathfrak{C} \to C$ be the orbifold curve constructed with respect to the fractional 
part of $D$ and let $L=\mathcal{O}_\mathfrak{C}(\pi^*D)$. Since $\pi_*F^e_*\mathcal{O}_\mathfrak{C} \cong F^e_*\mathcal{O}_C$, it follows that $\mathfrak{C}$ does 
not have GFFRT as well. Then $(\mathfrak{C},L)$ does not have GFFRT, and the result follows from Corollary 
\ref{FFRT-GFFRT}. 
\end{proof}

It follows from the proposition above that $R(C,D)$ has FFRT only if $C \cong \PP^1$. 
To state our main theorem let us fix the notation used through the remainder of this paper. 
Let $R=R(\PP^1,D)$ be a two-dimensional normal graded ring with $R_0=k$ an algebraically closed field 
of characteristic $p>0$.
Here  
$$D=\sum_{i=1}^n \frac{s_i}{r_i} P_i$$ 
is an ample $\Q$-divisor on $\PP^1$, where $P_1,\dots,P_n$ are distinct closed points on $\PP^1$, and $r_i>0$ and $s_i$ are coprime integers. 

Let $\mathfrak{C}=\PP^1[\sqrt[r_1]{P_1},\dots,\sqrt[r_n]{P_n}\,]$ be the weighted projective line with weight 
$(r_1,\dots,r_n)$. The following are well-known; see e.g., \cite{H} and references therein. 
\begin{enumerate}
\item[(1)] $R$ has a log terminal singularity if and only if $\delta_{\mathfrak{C}}=\deg \omega_\mathfrak{C}= - 2 + \sum_{i=1}^n\frac{r_i-1}{r_i}<0$.
\item[(2)] $R$ has a log canonical singularity if and only if $\delta_{\mathfrak{C}} \le0$.
\end{enumerate}
In the case of (1) above, 
$R$ has FFRT by Theorem \ref{FRT}. 
On the other hand, we have the following theorem.

\begin{theorem}\label{MainThm}
In the notation as above, suppose that $\delta_{\mathfrak{C}} \ge 0$ and that $p$ does not divide any $r_i$. 
Then $R=R(\PP^1,D)$ does not have FFRT. 
\end{theorem}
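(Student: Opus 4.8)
The plan is to reduce the statement, via Corollary \ref{FFRT-GFFRT}, to the global FFRT property on the weighted projective line $\mathfrak{C}=\PP^1[\sqrt[r_1]{P_1},\dots,\sqrt[r_n]{P_n}\,]$, and then to invoke the structural results of Sections 5 and 6 according to whether $\delta_{\mathfrak{C}}=0$ or $\delta_{\mathfrak{C}}>0$ (the remaining possibility $\delta_{\mathfrak{C}}<0$ is excluded by hypothesis; it is the log terminal case, where $R$ does have FFRT by Theorem \ref{FRT}).

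First I would recall that $R=R(\PP^1,D)\cong R(\mathfrak{C},L)$ with $L=\mathcal{O}_\mathfrak{C}(\pi^*D)$, so by Corollary \ref{FFRT-GFFRT} it suffices to show that $(\mathfrak{C},L)$ does not have GFFRT. The key elementary observation is that among the bundles $F^e_*(L^i)$ with $0\le i\le p^e-1$ entering the definition of GFFRT for $(\mathfrak{C},L)$, the index $i=0$ always occurs, and $F^e_*(L^0)=F^e_*\mathcal{O}_\mathfrak{C}$. Hence the set of isomorphism classes of indecomposable summands of $\{F^e_*\mathcal{O}_\mathfrak{C}\mid e\ge 0\}$ is contained in the GFFRT set of $(\mathfrak{C},L)$, and it is enough to prove that this smaller set is infinite, i.e.\ that $\mathfrak{C}$ itself does not have GFFRT.

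Next I would split into the two cases. If $\delta_{\mathfrak{C}}=0$, then since $p$ does not divide any $r_i$, Theorem \ref{genus1} gives directly that $\mathfrak{C}$ does not have GFFRT. If $\delta_{\mathfrak{C}}>0$, then Theorem \ref{g>1} tells us that $F^e_*\mathcal{O}_\mathfrak{C}$ is indecomposable for every $e$; since $F\colon\mathfrak{C}\to\mathfrak{C}$ is finite flat of degree $p$ (Section \ref{frob}), the bundle $F^e_*\mathcal{O}_\mathfrak{C}$ has rank $p^e$, so these bundles are pairwise non-isomorphic as $e$ varies, and again $\mathfrak{C}$ does not have GFFRT. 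In either case $(\mathfrak{C},L)$ fails GFFRT, and the conclusion follows from Corollary \ref{FFRT-GFFRT}.

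The substantive work has all been done earlier — Theorem \ref{genus1} for the $\delta_{\mathfrak{C}}=0$ case (through the unramified cover $\varphi\colon E\to\mathfrak{C}$ by an elliptic curve and Atiyah's classification of $F^e_*\mathcal{O}_E$) and Theorem \ref{g>1} for $\delta_{\mathfrak{C}}>0$ (through the slope stability of $F^e_*\mathcal{O}_\mathfrak{C}$ established in Theorem \ref{stability}) — so no genuinely new obstacle arises at this stage. The only point requiring care is the bookkeeping that reduces GFFRT of the pair $(\mathfrak{C},L)$ to GFFRT of $\mathfrak{C}$ alone, which is precisely the $i=0$ observation above; everything else is assembly.
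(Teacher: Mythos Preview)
Your proof is correct and follows essentially the same route as the paper: invoke Theorems \ref{genus1} and \ref{g>1} to show $\mathfrak{C}$ does not have GFFRT, then pass to $(\mathfrak{C},L)$ and apply Corollary \ref{FFRT-GFFRT}. If anything, you are more explicit than the paper about the two bookkeeping steps (the $i=0$ observation reducing GFFRT of $(\mathfrak{C},L)$ to that of $\mathfrak{C}$, and the rank argument showing the indecomposable bundles $F^e_*\mathcal{O}_\mathfrak{C}$ are pairwise non-isomorphic), both of which the paper leaves implicit.
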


\proof
It follows from Theorems \ref{genus1} and \ref{g>1} that $\mathfrak{C}$ does not have GFFRT. Then for 
$L=\mathcal{O}_\mathfrak{C}(\pi^*D)$, the pair $(\mathfrak{C},L)$ does not have GFFRT, and the result 
follows from Corollary \ref{FFRT-GFFRT}. 
\endproof

In Theorem \ref{MainThm} the assumption that $p$ does not divide any $r_i$ is really necessary as we 
will see in the following examples. 

\begin{proposition}
\label{propsand}
Assume that all $r_{i}$ are equal to $p$, then $R=R(\PP^{1}, D)$ has FFRT.
\end{proposition}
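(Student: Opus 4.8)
The plan is to exploit the ``Frobenius sandwich'' structure of $\mathfrak{C}$ recorded in Example \ref{sand}, which reduces the Frobenius push-forwards on $\mathfrak{C}$ to Frobenius push-forwards of line bundles on $\PP^1$, and these are completely decomposable. Since all weights $r_j$ equal $p$, Example \ref{sand} (with $e=1$) provides a finite flat morphism $\varphi\colon\PP^1\to\mathfrak{C}$ with $F_{\PP^1}=\pi\circ\varphi$ and $F_{\mathfrak{C}}\cong\varphi\circ\pi$; composing these, $F_{\mathfrak{C}}^{e}\cong\varphi\circ F_{\PP^1}^{e-1}\circ\pi$ for all $e\ge1$. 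First I would record a few elementary facts. Writing $\pi^{*}D=\sum_{j}s_{j}Q_{j}$, Lemma \ref{cadman}(4) gives $\pi_{*}\mathcal{O}_{\mathfrak{C}}=\mathcal{O}_{\PP^1}$ and $\pi_{*}(L^{i})=\mathcal{O}_{\PP^1}(\lfloor iD\rfloor)$, and $R^{>0}\pi_{*}$ vanishes on every coherent sheaf; in particular $\varphi_{*}\mathcal{O}_{\PP^1}=\varphi_{*}\pi_{*}\mathcal{O}_{\mathfrak{C}}\cong(F_{\mathfrak{C}})_{*}\mathcal{O}_{\mathfrak{C}}$. Also $\varphi^{*}\mathcal{O}_{\mathfrak{C}}(Q_{1})\cong\mathcal{O}_{\PP^1}(1)$, since $(\varphi^{*}\mathcal{O}_{\mathfrak{C}}(Q_{1}))^{\otimes p}=\varphi^{*}\mathcal{O}_{\mathfrak{C}}(pQ_{1})=\varphi^{*}\pi^{*}\mathcal{O}_{\PP^1}(P_{1})=F_{\PP^1}^{*}\mathcal{O}_{\PP^1}(1)=\mathcal{O}_{\PP^1}(p)$ and $\Pic\PP^1$ is torsion-free. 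By Corollary \ref{FFRT-GFFRT} it suffices to prove that $(\mathfrak{C},L)$ has GFFRT.

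For $e\ge1$ and $0\le i\le p^{e}-1$, the identities above give
$$(F_{\mathfrak{C}}^{e})_{*}(L^{i})\;\cong\;\varphi_{*}\bigl((F_{\PP^1}^{e-1})_{*}\,\mathcal{O}_{\PP^1}(\lfloor iD\rfloor)\bigr).$$
Since every vector bundle on $\PP^1$ splits into line bundles, write $(F_{\PP^1}^{e-1})_{*}\mathcal{O}_{\PP^1}(\lfloor iD\rfloor)\cong\bigoplus_{k}\mathcal{O}_{\PP^1}(a_{k})$. As $\mathcal{O}_{\PP^1}(a_{k})\cong\varphi^{*}\mathcal{O}_{\mathfrak{C}}(a_{k}Q_{1})$, the projection formula yields
$$(F_{\mathfrak{C}}^{e})_{*}(L^{i})\;\cong\;\bigoplus_{k}\mathcal{O}_{\mathfrak{C}}(a_{k}Q_{1})\otimes(F_{\mathfrak{C}})_{*}\mathcal{O}_{\mathfrak{C}}.$$
By the Krull--Schmidt theorem, every indecomposable summand of $(F_{\mathfrak{C}}^{e})_{*}(L^{i})$ is then of the form $\mathcal{O}_{\mathfrak{C}}(aQ_{1})\otimes\G$, where $a$ is one of the $a_{k}$ and $\G$ is one of the finitely many indecomposable summands of the fixed bundle $(F_{\mathfrak{C}})_{*}\mathcal{O}_{\mathfrak{C}}$. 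Since $e=0$ contributes only $\mathcal{O}_{\mathfrak{C}}$, GFFRT will follow once the exponents $a_{k}$ are confined to a finite set of integers independent of $e$ and $i$.

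This uniform bound is the only substantial point. Put $m=\deg\lfloor iD\rfloor$ and $\E=(F_{\PP^1}^{e-1})_{*}\mathcal{O}_{\PP^1}(m)$. As $\mathcal{O}_{\PP^1}(a_{k})$ is a direct summand of $\E$, both $\Hom_{\PP^1}(\mathcal{O}_{\PP^1}(a_{k}),\E)$ and $\Hom_{\PP^1}(\E,\mathcal{O}_{\PP^1}(a_{k}))$ are non-zero; and Grothendieck duality for the finite flat morphism $F_{\PP^1}^{e-1}$ identifies these groups with $H^{0}(\mathcal{O}_{\PP^1}(m-a_{k}p^{e-1}))$ and $H^{0}(\mathcal{O}_{\PP^1}((a_{k}+2)p^{e-1}-2-m))$ respectively. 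Hence
$$\frac{m+2}{p^{e-1}}-2\;\le\;a_{k}\;\le\;\frac{m}{p^{e-1}}.$$
Since $m=\deg\lfloor iD\rfloor$ satisfies $i\deg D-n\le m\le i\deg D$ and $0\le i\deg D<p^{e}\deg D$ (because $i\ge0$, $\deg D>0$ and $i<p^{e}$), we conclude $-n-2\le a_{k}\le p\deg D=\sum_{j}s_{j}$, a range depending only on $D$ and $p$. Therefore the indecomposable summands of all the $(F_{\mathfrak{C}}^{e})_{*}(L^{i})$ lie in the finite set of bundles $\mathcal{O}_{\mathfrak{C}}(aQ_{1})\otimes\G$ with $-n-2\le a\le\sum_{j}s_{j}$ and $\G$ an indecomposable summand of $(F_{\mathfrak{C}})_{*}\mathcal{O}_{\mathfrak{C}}$; so $(\mathfrak{C},L)$ has GFFRT and, by Corollary \ref{FFRT-GFFRT}, $R=R(\PP^1,D)$ has FFRT. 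The main obstacle is precisely this $e$-independence of the bound on the $a_{k}$ --- equivalently, that Frobenius push-forwards of line bundles on $\PP^1$ are ``as balanced as possible'' --- which, as above, is forced by Grothendieck duality together with the observation that $\deg\lfloor iD\rfloor$ is $O(p^{e})$ whereas $\rk\E=p^{e-1}$.
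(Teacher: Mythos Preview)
Your argument is correct and follows essentially the same approach as the paper: both exploit the Frobenius sandwich factorisation $F_{\mathfrak{C}}^{e}\cong\varphi\circ F_{\PP^1}^{e-1}\circ\pi$ from Example \ref{sand} to reduce to Frobenius push-forwards of line bundles on $\PP^1$, whose summands lie in a range independent of $e$. The paper simply stops at the observation that all indecomposable summands of $F^e_*(L^i)$ lie among the summands of the finitely many bundles $\varphi_*\mathcal{O}_{\PP^1}(j)$; your additional step---using $\varphi^*\mathcal{O}_{\mathfrak{C}}(Q_1)\cong\mathcal{O}_{\PP^1}(1)$ and the projection formula to rewrite $\varphi_*\mathcal{O}_{\PP^1}(j)\cong\mathcal{O}_{\mathfrak{C}}(jQ_1)\otimes(F_{\mathfrak{C}})_*\mathcal{O}_{\mathfrak{C}}$---is a harmless refinement that makes the indecomposable summands more explicit but is not needed for GFFRT. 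Your bounds on the $a_k$ via Grothendieck duality are also a bit more carefully stated than the paper's (which asserts $-1\le a_i$ without justification), but the content is the same.
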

\begin{proof}
Under the assumption, $\mathfrak{C}$ is a Frobenius sandwich as in Example \ref{sand}, that is, the Frobenius morphism of $\PP^1$ is factorized as  
$$
F_{\PP^1} \colon \PP^1 \stackrel{\varphi}{\lra} \mathfrak{C} \stackrel{\pi}{\lra} \PP^1
$$
and the Frobenius $F=F_\mathfrak{C}$ of $\mathfrak{C}$ is factorized as $F=\varphi\circ\pi$. 
Then for $e\ge 1$, the $e$-th Frobenius on $\mathfrak{C}$ is factorized as $F^e=\varphi\circ(F_{\PP^1})^{e-1}\circ\pi$. 
Thus the claim is reduced to the following splitting ( see \cite[Theorem 4.5]{OU} for example)
$$
(F_{\PP^1})^{e}_*\mathcal{O}_{\PP^1}(a)=\bigoplus_{k=0}^{p^{e}-1} \mathcal{O}_{\PP^1}\left(\lfloor \frac{a -k}{p^{e}} \rfloor\right).
$$

In fact, for a line bundle $L$ on $\mathfrak{C}$ of $\deg L>0$ and $0 \le i \le p^e-1$, we have  
$$
F^e_*(L^i) = \varphi_*(F_{\PP^1})^{e-1}_*\pi_*(L^i) = \varphi_*(F_{\PP^1})^{e-1}_*\mathcal{O}_{\PP^1}(a_i),
$$ 
where $1-n \le a_i \le (p^e-1)\deg L$. 
Hence if $p^{e-1} \ge \max \lbrace n, \deg L \rbrace$, then 
$(F_{\PP^1})^{e-1}_*\mathcal{O}_{\PP^1}(a_i)$
splits into line bundles belonging to
$$
\left\lbrace \mathcal{O}_{\PP^1}(-2),\dots,\mathcal{O}_{\PP^1}(p\deg L-1) \right \rbrace, 
$$
so that, $F^e_*(L^i)$, with $0\le i\le p^e-1$ and $e\gg 1$, splits into finitely many vector bundles belonging to 
$$
\lbrace 
\varphi_*\mathcal{O}_{\PP^1}(-2),\varphi_*\mathcal{O}_{\PP^1}(-1),\dots,\varphi_*\mathcal{O}_{\PP^1}(p\deg L-1)
\rbrace.
$$ 

We therefore conclude that $(\mathfrak{C},L)$ has GFFRT. 
\end{proof}

\begin{example}\label{eg-237}
Let $R=k[x,y,z]/\langle x^2+y^3+z^7\rangle$. This is not a rational singularity but $\Proj R \cong \PP^1$ 
and $R \cong R(\PP^1,D)$ for a $\Q$-divisor $D=\frac12(\infty)-\frac13(0)-\frac17(1)$ on $\PP^1$. 
By Theorem \ref{MainThm}, $R$ does not have FFRT if $p\ne 2,3,7$. On the other hand, $R$ has FFRT 
if $p=2,3,7$ (\cite{Sh}). 
\end{example}

We also have an example of rational singularity which does not have FFRT.
\begin{example}\label{eg-333}
Let $R=R(\PP^1,D)$ for a $\Q$-divisor $D=\frac13(\infty)+\frac13(0)-\frac13(1)$ on $\PP^1$. 
This is a rational log canonical singularity but not log terminal. 
By Theorem \ref{MainThm}, $R$ does not have FFRT if $p \ne 3$.
\end{example}

\begin{remark}
In Examples \ref{eg-237} and \ref{eg-333}, the ring $R=R(\PP^1,D)$ does not have finite representation type 
in any characteristic $p>0$, since $\delta_\mathfrak{C}\ge 0$. However, $R$ has FFRT in exceptional characteristics, 
that is, $p=2,3,7$ in Example \ref{eg-237} and $p=3$ in Example \ref{eg-333}. In these exceptional cases 
$\mathfrak{C}$ turns out to be a Frobenius sandwich, as we have seen in the proof of Proposition \ref{propsand}. 
\end{remark}

In the two-dimensional case, it is known that $F$-regular rings have log terminal singularities.
Hence $F$-regular implies FFRT property as we saw in Section 4.
However, this statement cannot hold in the higher dimensional case.
For there exists an example which is $F$-regular and does not have FFRT property \cite{SS}, \cite[Remark 3.4. (2)]{TT}.

\begin{question}
Let $\mathfrak{X}$ be a root stack in arbitrary dimensions, and $L$ a line bundle on $\mathfrak{X}$. 
Is there any difference between the GFFRT properties of $\mathfrak{X}$ and the pair $(\mathfrak{X},L)$?
Here the latter property is equivalent to the FFRT property of the section ring $R=R(\mathfrak{X}, L)$ (cf. Corollary \ref{FFRT-GFFRT}).
\end{question}

\end{document}